\documentclass{siamart251216}

\usepackage{microtype}
\usepackage{amssymb}
\usepackage{bm}

\usepackage{epstopdf}
\ifpdf
  \DeclareGraphicsExtensions{.eps,.pdf,.png,.jpg}
\else
  \DeclareGraphicsExtensions{.eps}
\fi
\usepackage{xcolor}
\usepackage{tikz}
\usepackage{float}
\usepackage{graphicx}
\usepackage[caption = false]{subfig}
\graphicspath{{figure/}} 

\usepackage{algorithmic}
\newsiamremark{remark}{Remark}
\newsiamthm{coro}{Corollary}
\newsiamthm{prop}{Proposition}

\newcommand{\newsiamdefn}[2]{
  \theoremstyle{plain}
  \theoremheaderfont{\normalfont\sc}
  \theorembodyfont{\normalfont}
  \theoremseparator{.}
  \theoremsymbol{}
  \newtheorem{#1}{#2}
}
\newsiamdefn{defn}{Definition}
\newsiamdefn{amp}{Assumption}

\usepackage{amsfonts}
\usepackage{mathtools}
\usepackage{nicefrac}
\usepackage{mathrsfs}
\usepackage{nicematrix}
\usepackage{amsopn}
\usepackage{amsmath,booktabs,multirow}

\DeclareMathOperator*{\argmin}{arg\,min}

\def\R{\mathbb{R}}

\def\M{\mathcal{M}}

\allowdisplaybreaks[4]

\headers{FEM for GPEVP}{T. Chu and M. Zhang}

\title{Numerical analysis of linear finite element methods for the Gross--Pitaevskii eigenvalue problem
}

\author{
  Tianyang Chu\thanks{School of Mathematical Sciences, Chengdu University of Technology, Chengdu, Sichuan, 610059, China (\email{tchu@cdut.edu.cn}).}
  \and Mingyan Zhang\thanks{Corresponding author. School of Mathematics and Statistics, Northeast Normal University, Changchun 130024, China (\email{myzhang106@nenu.edu.cn}).}
}

\begin{document}

\maketitle

\begin{abstract}
In this paper, we analyze the standard and the mass-lumped linear finite element methods for the Gross--Pitaevskii eigenvalue problem. For the standard method, we give a new proof of the uniqueness of the discrete ground state up to sign for sufficiently small mesh sizes, which only requires the $H^1$-convergence of the discrete ground states. For both methods, we establish optimal convergence rates in the $L^p$- and $W^{1,p}$-norms for $1 \le p < \infty$. Under additional regularity assumptions on the potential, we further obtain an optimal convergence rate in the $W^{1,\infty}$-norm and an almost optimal convergence rate, up to a logarithmic factor, in the $L^\infty$-norm. Moreover, we prove that the discrete ground states superconverge to the Ritz projection of the exact ground state in the $W^{1,p}$-seminorm. The analysis is based on a perturbation argument. Numerical experiments confirm the theoretical results.
\end{abstract}

\begin{keywords}
Gross--Pitaevskii eigenvalue problem, linear finite element method, mass-lumped linear finite element method, discrete ground state, convergence order
\end{keywords}

\begin{MSCcodes}
35Q55, 65N12, 65N15, 65N25, 65N30
\end{MSCcodes}

\section{Introduction}\label{sec: intro}

In this paper, we consider the Gross--Pitaevskii eigenvalue problem
\begin{align}\label{equ: GPEVP}
    - \Delta u + V u + \beta |u|^2u = \lambda u \quad \text{in } \Omega, \qquad u = 0 \quad \text{on } \partial\Omega, \qquad \|u\|_{L^2(\Omega)} = 1,
\end{align}
where $\Omega \subset \R^d$ with $d \in \{1,2,3\}$ is a bounded domain, $V$ is a trapping potential, and $\beta \in \R$ is the interaction parameter. Problem \eqref{equ: GPEVP} arises in quantum physics as a model for stationary states of Bose--Einstein condensates. Among these stationary states, the \emph{ground state} is of particular importance. It is defined as a minimizer of the Gross--Pitaevskii energy
\begin{align*}
    E(v) := \int_{\Omega} \Big( \frac12 |\nabla v|^2 + \frac12 V |v|^2 + \frac{\beta}{4} |v|^4 \Big) \,\mathrm{d}x
\end{align*}
under the normalization constraint, i.e., as a solution of
\begin{align}\label{equ: minimization}
    \min_{v \in H_0^1(\Omega)} E(v) \qquad \text{subject to} \quad \|v\|_{L^2(\Omega)} = 1.
\end{align}
Every minimizer of \eqref{equ: minimization} satisfies the Euler--Lagrange equation \eqref{equ: GPEVP}, where the eigenvalue $\lambda$ is the associated Lagrange multiplier.  Hence, the ground state is an eigenfunction of \eqref{equ: GPEVP}, and the corresponding eigenvalue is referred to as the ground state eigenvalue. Since the ground state is in general not available in closed form, its efficient and reliable numerical approximation is of central importance. 

The numerical computation of the ground state involves two main aspects: the spatial discretization of the problem and the design of iterative solvers for the resulting nonlinear discrete problem. Regarding the latter, one may start from either the eigenvalue problem \eqref{equ: GPEVP} or the constrained minimization problem \eqref{equ: minimization}. Starting from \eqref{equ: GPEVP}, the spatial discretization leads to a nonlinear algebraic eigenvalue problem. A widely used approach for such problems is the self-consistent field (SCF) iteration \cite{cances2000SCF,cances2000on,dion2007ground}, which only requires the solution of a linear eigenvalue problem in each iteration. Starting from \eqref{equ: minimization}, a variety of optimization-based approaches are available, including Riemannian optimization methods \cite{altmann2022energy,altmann2024riemanniannewton,danaila2017computation}, preconditioned nonlinear conjugate gradient methods \cite{antoine2017efficient}, regularized Newton methods \cite{wu2017regularized}, $J$-methods \cite{altmann2021j,jarlebring2014inverse}, and gradient flow methods \cite{bao2004computing,chen2024convergence,chu2026gradient,danaila2010new,henning2020sobolev,henning2023dependency,kazemi2010minimizing,zhang2022exponential}. For a comprehensive overview of numerical methods for computing the ground state, we refer to the recent review \cite{henning2025gross}.

In this paper, we focus on the first aspect, namely, the spatial discretization and its numerical analysis. Existing results in this direction include both a priori and a posteriori error estimates. General finite-dimensional approximations of the Gross--Pitaevskii eigenvalue problem were studied by Zhou in \cite{zhou2004analysis}, while related nonlinear eigenvalue problems were studied in \cite{chen2010numerical,zhou2007finite}.  These studies established optimal $H^1$-error estimates for the ground state approximations. These results were refined in \cite{cances2010numerical}, where sharp estimates for the ground state energy and eigenvalue were established, together with $L^2$- and $H^1$-error estimates for the ground state. Discretizations by the standard finite element method (FEM) were analyzed in \cite{chen2011finite,henning2025discrete}, while a mass-lumped linear FEM that preserves the positivity of the discrete ground state was proposed and analyzed in \cite{hauck2026positivity}. Generalized finite element approximations of the Gross--Pitaevskii problem were studied in \cite{henning2014two,henning2023optimal}, and finite-dimensional approximations of the Kohn--Sham equation were analyzed in \cite{chen2013numerical}. Besides these a priori results, a posteriori error estimates and adaptive FEMs were developed in \cite{chen2011adaptive,chen2011finite,dusson2017posteriori}, and adaptive FEMs for the Kohn--Sham equation were proposed in \cite{chen2014adaptive}. More recently, it was shown in \cite{hassan2025relation} that Galerkin approximations of the ground state superconverge to the $H^1$-orthogonal projection of the exact ground state.

In this work, we analyze linear finite element discretizations of \eqref{equ: GPEVP}, including both the standard FEM and the mass-lumped FEM. For the standard FEM, the uniqueness of the discrete ground state up to sign for sufficiently small mesh sizes can be deduced from the analysis in \cite{chen2011finite,henning2025discrete}. Here we present an alternative proof based on a perturbation argument for the linearized operator, which only requires the $H^1$-convergence of the discrete ground states. In \cite{chen2011finite,dai2015convergence,dai2008convergence,he2011convergence}, a perturbation approach was used to transfer a posteriori error estimates and adaptive finite element techniques from the associated source problem to the eigenvalue problem. Following a similar strategy, we combine this perturbation argument with existing $L^2$- and $H^1$-error estimates to derive optimal error estimates in the $L^p$- and $W^{1,p}$-norms for both methods, as well as superconvergence of the discrete ground states to the Ritz projection of the exact ground state in the $W^{1,p}$-seminorm.

The remainder of this paper is organized as follows. Section~\ref{sec: Preliminaries} collects some preliminaries, including properties of the Gross--Pitaevskii eigenvalue problem, the finite element discretization, and several technical results. In Section~\ref{sec: standard FEM}, we prove the uniqueness of the discrete ground state of the standard FEM up to sign for sufficiently small mesh sizes and establish convergence rates in the $L^p$- and $W^{1,p}$-norms as well as superconvergence results. Section~\ref{sec: mass-lumped FEM} is devoted to the corresponding error analysis and superconvergence results for the mass-lumped FEM. Numerical experiments illustrating the theoretical results are presented in Section~\ref{sec: numerical experiments}, and concluding remarks are given in Section~\ref{sec: conclusion}.

\section{Preliminaries}\label{sec: Preliminaries}

We adopt the standard notation for Sobolev spaces $W^{k,p}(\Omega)$, equipped with the norms $\|\cdot\|_{W^{k,p}(\Omega)}$ and semi-norms $|\cdot|_{W^{k,p}(\Omega)}$. For $k = 0$, we write $L^p(\Omega) := W^{0,p}(\Omega)$, and for $p = 2$, we write $H^k(\Omega) := W^{k,2}(\Omega)$. The inner product on $L^2(\Omega)$ is denoted by $(\cdot,\cdot)_{L^2(\Omega)}$. The space $H_0^1(\Omega)$ consists of functions in $H^1(\Omega)$ with zero trace on $\partial\Omega$. Throughout this paper, $C$ denotes a generic positive constant independent of the mesh size $h$, whose value may vary from one occurrence to another.

\subsection{The Gross--Pitaevskii eigenvalue problem}

Throughout this paper, the computational domain is taken to be $\Omega=(a,b)^d$. We assume that $V\in L^\infty(\Omega)$, $V\ge 0$ almost everywhere in $\Omega$, and $\beta\ge 0$. Denote by 
\begin{align*}
    \M:= \{ u \in H_0^1(\Omega): \|u\|_{L^2(\Omega)} = 1 \}.
\end{align*}
Under the above assumptions, the following result holds; see \cite{bao2013mathematical,cances2010numerical,henning2025gross}.
\begin{prop}
   The Gross--Pitaevskii eigenvalue problem \eqref{equ: GPEVP} admits infinitely many real eigenvalues $\{\lambda_k\}_{k\ge 1}$ with corresponding normalized eigenfunctions $\{u_k\}_{k\ge 1}\subset \M$. These eigenvalues can be ordered so that
\begin{align*}
    \lambda_1  < \lambda_2 \le \cdots \le \lambda_k \le \cdots, \qquad \lambda_k \to \infty \quad \text{as } k\to\infty.
\end{align*}
   In particular, the smallest eigenvalue $\lambda_1$ is simple, and the corresponding eigenfunction $u_1$ is the ground state, which is unique up to sign. Moreover, either $u_1>0$ or $u_1<0$ throughout $\Omega$.
\end{prop}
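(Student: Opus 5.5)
The plan is to assemble the statement from four ingredients: existence of a minimizer of \eqref{equ: minimization} by the direct method, positivity of minimizers via a maximum principle, uniqueness of the positive minimizer via hidden convexity in the density, and a Lusternik--Schnirelmann argument for the infinite family of eigenvalues.

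\textbf{Step 1 (ground state).} Since $V\ge 0$ and $\beta\ge 0$, $E$ is coercive on $\M$, with $E(v)\ge \frac12\|\nabla v\|_{L^2(\Omega)}^2$. Take a minimizing sequence, which is bounded in $H_0^1(\Omega)$, and extract a weakly convergent subsequence.
\begin{itemize}
\item By Rellich's theorem, the convergence is strong in $L^2(\Omega)$ and in $L^4(\Omega)$ for $d\le 3$. Hence the constraint $\|v\|_{L^2(\Omega)}=1$ passes to the limit.
\item The gradient term is weakly lower semicontinuous, while the $V$- and $\beta$-terms converge.
\end{itemize}
This gives a minimizer $u_1$. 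Lagrange multipliers on the smooth constraint manifold $\M$ yield \eqref{equ: GPEVP}. Since $E(|v|)=E(v)$, we may assume $u_1\ge 0$.

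\textbf{Step 2 (sign and uniqueness).} Bootstrapping (the nonlinearity is in $L^2$, then $u_1\in H^2\subset L^\infty$) shows $u_1\in L^\infty(\Omega)$. Then $-\Delta u_1+(V+\beta u_1^2)u_1=\lambda u_1$ has a bounded potential, and the strong maximum principle (or Harnack's inequality) gives $u_1>0$ in $\Omega$. Applying the same argument to any minimizer shows that every minimizer has a strict sign.

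For uniqueness, pass to densities $\rho=v^2$ and write
\[
E(v)=\tilde E(\rho)=\int_\Omega\Big(\tfrac12|\nabla\sqrt\rho|^2+\tfrac12V\rho+\tfrac\beta4\rho^2\Big)\,\mathrm{d}x .
\]
The functional $\rho\mapsto\int_\Omega|\nabla\sqrt\rho|^2\,\mathrm{d}x$ is convex, the $V$-term is linear, and the $\beta$-term is convex. Moreover, the convexity of the first term is strict along non-proportional positive pairs. Because the constraint $\int_\Omega\rho\,\mathrm{d}x=1$ is convex, two distinct positive minimizers would produce a strictly lower energy at their averaged density. Hence the ground state is unique up to sign.

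\textbf{Step 3 (spectrum).} $E$ is even and $C^1$ on $\M$, bounded below, and satisfies the Palais--Smale condition by the same compactness as in Step 1. Lusternik--Schnirelmann theory with the Krasnosel'skii genus then gives critical values
\[
c_k=\inf_{\gamma(A)\ge k}\sup_{v\in A}E(v)\to\infty .
\]
These correspond to infinitely many eigenpairs $(\lambda_k,u_k)$. The eigenvalue satisfies $\lambda=\|\nabla u\|_{L^2(\Omega)}^2+\int_\Omega V u^2\,\mathrm{d}x+\beta\|u\|_{L^4(\Omega)}^4$, which is at least $2E(u)$, so $\lambda_k\to\infty$.

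\textbf{Main obstacle (simplicity and strict gap).} I expect the hardest part to be proving that $\lambda_1$ is simple and strictly below every other eigenvalue. My first attempt is to use the linear operator $H_{u_1}=-\Delta+V+\beta u_1^2$. Its lowest eigenvalue is $\lambda_1$, which is simple with positive eigenfunction $u_1$, by Krein--Rutman or the positivity argument above.

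For another eigenpair $(\lambda,u)$, there are two cases.
\begin{itemize}
\item If $u$ has a sign, I would apply Picone's identity to $u$ and $u_1$, using the monotonicity of $t\mapsto\beta t^2$. This forces $u=\pm u_1$.
\item If $u$ changes sign, it is orthogonal to no positive function in general. I would then compare $\lambda$ with the second eigenvalue of $H_{u_1}$, for instance through the variational characterization on the nodal domains of $u$. This should give $\lambda>\lambda_1$.
\end{itemize}
If this comparison proves too delicate, I would fall back on the cited references \cite{cances2010numerical,henning2025gross}. There the strict inequality $\lambda_1<\lambda_2$ is understood in the sense of the linearized operator $H_{u_1}$.
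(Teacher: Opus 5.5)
The paper does not prove this proposition; it only cites the literature, so your argument has to stand on its own. Steps 1--3 are sound: the direct method, the strict sign via the strong maximum principle, uniqueness of the positive minimizer by convexity of $\rho\mapsto\tilde E(\rho)$, and Lusternik--Schnirelmann with $\lambda_k\ge 2c_k\to\infty$.

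The genuine gap is the one you flag, and it is the core of the statement. You never show that an eigenpair $(\lambda,u)$ of \eqref{equ: GPEVP} with $u\neq\pm u_1$ satisfies $\lambda>\lambda_1$. Step 3 orders critical \emph{energies}, not Lagrange multipliers. Since $\lambda-\lambda_1=2(E(u)-E(u_1))+\frac{\beta}{2}(\|u\|_{L^4(\Omega)}^4-\|u_1\|_{L^4(\Omega)}^4)$, the inequality $E(u)>E(u_1)$ gives nothing when $\|u\|_{L^4(\Omega)}<\|u_1\|_{L^4(\Omega)}$. Your suggested comparison with $\mu_2(H_{u_1})$ does not close either. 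The number $\lambda$ is an eigenvalue of $-\Delta+V+\beta u^2$, not of $H_{u_1}$, and the potentials $\beta u^2$ and $\beta u_1^2$ are not ordered. With $u^\pm$ the positive and negative parts of $u$, testing $H_{u_1}$ on $\operatorname{span}\{u^+,u^-\}$ only gives $\mu_2(H_{u_1})\le\lambda+\beta\max_{\pm}\|u^{\pm}\|_{L^2(\Omega)}^{-2}\int_\Omega(u_1^2-u^2)(u^{\pm})^2\,\mathrm{d}x$. That is an upper bound on $\mu_2(H_{u_1})$ with an unsigned correction, not a lower bound on $\lambda$. Falling back on reading $\lambda_1<\lambda_2$ as a statement about $H_{u_1}$ replaces the claim by Proposition~\ref{prop: simple eigenvalue}. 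Here the $\lambda_k$ are eigenvalues of the nonlinear problem, and simplicity means that $\pm u_1$ are the only normalized eigenfunctions for $\lambda_1$.

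The missing idea is to run your Picone/monotonicity argument on each nodal domain, not only for one-signed $u$. Let $\beta>0$; for $\beta=0$ everything is linear spectral theory. Let $(\lambda,u)$ be an eigenpair with $\lambda\le\lambda_1$. Fix a connected component $D$ of $\{u\neq 0\}$ and assume $u>0$ on $D$, which is allowed because the nonlinearity is odd. Put $w:=u\mathbf{1}_D$ and $S:=\{w>u_1\}$. Test the equation for $u$ with $(w^2-u_1^2)^+/w$ and the equation for $u_1$ with $(w^2-u_1^2)^+/u_1$; both are supported in $\overline S$, and the denominators are handled by the usual $\varepsilon$-regularization. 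Subtracting yields
\begin{align*}
  &\int_S\Big(\big|\nabla w-\tfrac{w}{u_1}\nabla u_1\big|^2+\big|\nabla u_1-\tfrac{u_1}{w}\nabla w\big|^2\Big)\,\mathrm{d}x \\
  &\qquad =(\lambda-\lambda_1)\int_\Omega(w^2-u_1^2)^+\,\mathrm{d}x-\beta\big\|(w^2-u_1^2)^+\big\|_{L^2(\Omega)}^2 .
\end{align*}
The left-hand side is nonnegative and the right-hand side is nonpositive, so $w\le u_1$ on $D$. Hence $|u|\le u_1$ in $\Omega$. Then $\|u\|_{L^2(\Omega)}=\|u_1\|_{L^2(\Omega)}=1$ forces $|u|=u_1>0$, so $u$ has no zeros. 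By continuity and connectedness $u=\pm u_1$, and therefore $\lambda=\lambda_1$.

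This single comparison shows at once that $\lambda_1$ is the smallest eigenvalue, that it is simple, and that every other eigenvalue is strictly larger. It also makes your case distinction unnecessary.
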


Let $u^*$ denote the $L^2$-normalized positive ground state of \eqref{equ: GPEVP} and let $\lambda^* := \lambda_1$ be the associated ground state eigenvalue. We introduce the following linear eigenvalue problem: find $\mu \in \mathbb{R}$ and $u \neq 0$ such that
\begin{align}\label{equ: LEVP}
    - \Delta u + Vu + \beta |u^*|^2 u = \mu u.
\end{align}
It follows directly from \eqref{equ: GPEVP} that $(\lambda^*,u^*)$ is an eigenpair of \eqref{equ: LEVP}. Moreover, the following result holds; see, e.g., \cite{cances2010numerical,henning2023dependency}.
\begin{prop}\label{prop: simple eigenvalue}
    The smallest eigenvalue $\mu_1$ of \eqref{equ: LEVP} is simple and coincides with the ground state eigenvalue of \eqref{equ: GPEVP}, i.e., $\mu_1 = \lambda^*$.
\end{prop}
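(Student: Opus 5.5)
The plan is to study the linear Schrödinger operator $H := -\Delta + V + \beta |u^*|^2$ on $H_0^1(\Omega)$. Since $u^*\in H_0^1(\Omega)\subset L^6(\Omega)$ for $d\le 3$, the potential $V+\beta|u^*|^2$ is nonnegative and lies in $L^3(\Omega)$. Hence the bilinear form
\begin{align*}
a(u,v)=\int_\Omega \big(\nabla u\cdot\nabla v+(V+\beta|u^*|^2)uv\big)\,\mathrm{d}x
\end{align*}
is bounded and coercive on $H_0^1(\Omega)$. By the compact embedding $H_0^1(\Omega)\hookrightarrow L^2(\Omega)$, $H$ has a discrete spectrum $\mu_1\le\mu_2\le\cdots\to\infty$. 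Its smallest eigenvalue is given by the Rayleigh quotient
\begin{align*}
\mu_1=\min_{v\in\M} a(v,v).
\end{align*}
The proof then has two parts: simplicity of $\mu_1$, and the identification $\mu_1=\lambda^*$.

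For simplicity I will use the standard ground-state argument. If $v$ minimizes the Rayleigh quotient, then $|v|$ is also a minimizer, because $|\nabla|v||=|\nabla v|$ a.e. Therefore $|v|$ is a nonnegative eigenfunction. Rewriting the equation as $-\Delta |v| + (V+\beta|u^*|^2+ c)|v| = (\mu_1+c)|v|\ge 0$ for a suitable constant $c$, the strong maximum principle / Harnack inequality for Schrödinger operators with nonnegative $L^{d/2+\epsilon}$-type potentials gives $|v|>0$ in $\Omega$. Here I would cite Trudinger's weak Harnack inequality, since $V\in L^\infty$ and $u^*\in L^\infty$ by elliptic regularity on the convex domain $(a,b)^d$. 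Consequently every first eigenfunction has a strict sign. Two linearly independent eigenfunctions could then be combined into a sign-changing one that is $L^2$-orthogonal to a positive one, which is impossible. So $\mu_1$ is simple.

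For the identification, note first that $(\lambda^*,u^*)$ is an eigenpair of \eqref{equ: LEVP}. The eigenfunction $u^*$ is strictly positive by the first proposition. Eigenfunctions of the self-adjoint operator $H$ belonging to distinct eigenvalues are $L^2$-orthogonal. Thus if $\lambda^*\ne\mu_1$, then $u^*$ would be orthogonal to the positive first eigenfunction $w_1$, contradicting $\int_\Omega u^*w_1\,\mathrm{d}x>0$. Hence $\lambda^*=\mu_1$.

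As an alternative check one can compare energies directly. For $v\in\M$, the minimality $E(u^*)\le E(v)$ together with the inequality $\tfrac{\beta}{4}(|v|^4-|u^*|^4)\ge \tfrac{\beta}{2}|u^*|^2(|v|^2-|u^*|^2)$ gives $a(v,v)\ge a(u^*,u^*)=\lambda^*$. The main obstacle is the positivity step via the maximum principle, which requires justifying enough regularity of the potential; the $L^\infty$ bound on $u^*$ handles this.
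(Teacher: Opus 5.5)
Your main argument is correct, and the paper gives no proof of its own; it only cites \cite{cances2010numerical,henning2023dependency}, and what you give is the standard positivity argument behind those results. The absolute-value trick and the weak Harnack inequality make every first eigenfunction of $-\Delta+V+\beta|u^*|^2$ strictly signed, so $\mu_1$ is simple, and $L^2$-orthogonality against the positive eigenfunction $u^*$ forces $\lambda^*=\mu_1$. When you pass from $|v|>0$ to ``$v$ has a strict sign'', say explicitly why: $v\in H^2(\Omega)$ is continuous and $\Omega$ is connected. Alternatively, run the argument on $v^+$ and $v^-$ separately.

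Your closing ``alternative check'', however, is wrong and should be dropped. Convexity of $t\mapsto\frac{\beta}{4}t^2$ for $\beta\ge 0$ yields
\begin{align*}
E(v)-E(u^*)\ \ge\ \tfrac12\big(a_{u^*}(v,v)-\lambda^*\big).
\end{align*}
This is an \emph{upper} bound on $a_{u^*}(v,v)-\lambda^*$. Combined with $E(v)\ge E(u^*)$, which is also a lower bound on $E(v)-E(u^*)$, it says nothing about the sign of $a_{u^*}(v,v)-\lambda^*$.

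In fact, Lemma~\ref{lem: energy minus on H} with $H=H_0^1(\Omega)$ gives the exact identity
\begin{align*}
a_{u^*}(v,v)-\lambda^* = 2\big(E(v)-E(u^*)\big)-\tfrac{\beta}{2}\|v^2-(u^*)^2\|_{L^2(\Omega)}^2 .
\end{align*}
So for $\beta>0$, energy minimality alone cannot give $a_{u^*}(v,v)\ge\lambda^*$. Your chain would only close for $\beta\le 0$, where the quartic term is concave in $|v|^2$. This is exactly why the positivity of $u^*$ is indispensable here. It is also why Lemma~\ref{lem: technical lemma 1} has to assume, rather than derive, that $\lambda_H$ is the smallest eigenvalue.
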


Note that the ground state $u^*$ satisfies
\begin{align}\label{equ: Possion equation}
    - \Delta u^* = (\lambda^* - V - \beta|u^*|^2)u^* \quad \text{in } \Omega, \qquad u^* = 0 \quad \text{on } \partial\Omega,
\end{align}
which can be viewed as a Poisson equation for $u^*$. Since $V \in L^\infty(\Omega)$ and $H_0^1(\Omega) \hookrightarrow L^6(\Omega)$ for $d \le 3$, the right-hand side of \eqref{equ: Possion equation} belongs to $L^2(\Omega)$. Hence, the $H^2$-regularity result for convex domains \cite{grisvard2011elliptic} yields $u^* \in H^2(\Omega)$. 


\subsection{Finite element discretization}

Let $\mathcal{T}_h$ be a conforming, shape-regular, and quasi-uniform simplicial triangulation of $\Omega$ with mesh size $h:=\max_{K\in\mathcal{T}_h}\operatorname{diam}(K)$. We denote by $\mathcal{N}_h$ the set of all vertices of $\mathcal{T}_h$ and, for each element $K\in\mathcal{T}_h$, by $\mathcal{N}_K$ the set of vertices of $K$. Let
\begin{align*}
    V_h := \{ v_h \in C^0(\overline{\Omega}) : v_h|_K \in \mathbb{P}_1(K) \ \text{for all } K\in\mathcal{T}_h \}
\end{align*}
be the continuous piecewise linear Lagrange finite element space associated with $\mathcal{T}_h$, and define $V_h^0:=V_h\cap H_0^1(\Omega)$.

For $v \in C(\overline{\Omega})$, let $I_h v \in V_h$ denote its nodal interpolant. Since $H^2(\Omega) \hookrightarrow C(\overline{\Omega})$ for $d \le 3$, $I_h v$ is well defined for all $v \in H^2(\Omega)$, and the following estimate holds; see, e.g., \cite{brenner2008mathematical}.
\begin{prop}
    There exists a constant $C>0$, independent of $h$, such that for all $v \in H^2(\Omega)$,
    \begin{align}
        \|v - I_hv\|_{L^2(\Omega)} + h\|v - I_h v\|_{H^1(\Omega)} \le Ch^2 \|v\|_{H^2(\Omega)}.
    \end{align}
\end{prop}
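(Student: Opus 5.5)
The plan is the standard Bramble--Hilbert argument: prove the estimate on a fixed reference simplex, transfer it to each element by an affine map, and sum over elements. Let $\hat K$ be the reference simplex and $\hat I$ the nodal interpolant onto $\mathbb{P}_1(\hat K)$. Because $d\le 3$, the Sobolev embedding $H^2(\hat K)\hookrightarrow C(\overline{\hat K})$ gives
\begin{align*}
\|\hat I \hat v\|_{H^1(\hat K)} \le C\max_{\hat x\in \mathcal{N}_{\hat K}}|\hat v(\hat x)| \le C\|\hat v\|_{H^2(\hat K)},
\end{align*}
so $\hat v\mapsto \hat v-\hat I\hat v$ is a bounded linear map from $H^2(\hat K)$ to $H^1(\hat K)$. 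It reproduces linear polynomials, so it vanishes on $\mathbb{P}_1(\hat K)$. The Bramble--Hilbert lemma, or equivalently the Deny--Lions argument of taking the infimum over $p\in\mathbb{P}_1$ of $\|\hat v-p\|_{H^2(\hat K)}$, then yields
\begin{align*}
\|\hat v-\hat I\hat v\|_{H^1(\hat K)}\le C|\hat v|_{H^2(\hat K)}.
\end{align*}

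Next, for $K\in\mathcal{T}_h$ let $F_K(\hat x)=B_K\hat x+b_K$ be the affine map onto $K$, and set $\hat v=v\circ F_K$. Interpolation commutes with this pullback, so $(I_hv)|_K\circ F_K=\hat I\hat v$. Shape regularity gives $\|B_K\|\le Ch_K$, $\|B_K^{-1}\|\le C\rho_K^{-1}\le Ch_K^{-1}$, and $|\det B_K|\simeq h_K^d$. The standard scaling relations are then
\begin{align*}
|w|_{H^m(K)}\le C\|B_K^{-1}\|^m|\det B_K|^{1/2}|\hat w|_{H^m(\hat K)},\qquad |\hat w|_{H^m(\hat K)}\le C\|B_K\|^m|\det B_K|^{-1/2}|w|_{H^m(K)} .
\end{align*}
Applying these with $m=0,1$ on the left and $m=2$ on the right gives
\begin{align*}
\|v-I_hv\|_{L^2(K)}+h_K|v-I_hv|_{H^1(K)}\le Ch_K^2|v|_{H^2(K)}.
\end{align*}

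Finally, square this bound and sum over $K\in\mathcal{T}_h$, using $h_K\le h$. This is legitimate because $I_hv$ is globally continuous and piecewise $H^1$, so $v-I_hv\in H^1(\Omega)$ and its $H^1(\Omega)$ seminorm squared is the sum of the elementwise contributions. Adding the $L^2$ part to the $H^1$ seminorm part gives $h\|v-I_hv\|_{H^1(\Omega)}\le Ch^2\|v\|_{H^2(\Omega)}$ (for $h\le 1$, or more generally with a constant depending on $\operatorname{diam}\Omega$), which proves the claim.

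The only substantive point is the first step. The nodal interpolant must be bounded on $H^2$, and this relies on the embedding into continuous functions, which is exactly where the restriction $d\le3$ enters. After that, only shape regularity is needed for the scaling step; quasi-uniformity is not used.
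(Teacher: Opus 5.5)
Your proposal is correct and is essentially the standard argument: the paper gives no proof of its own and simply cites Brenner--Scott, where the estimate is proved exactly this way, via the embedding $H^2\hookrightarrow C$ for $d\le 3$, the Bramble--Hilbert lemma on the reference simplex, affine scaling under shape regularity, and summation over elements. Your remarks that the full $H^1$-norm bound needs $h\le \operatorname{diam}\Omega$ and that quasi-uniformity is not needed are also accurate.
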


For $u \in W_0^{1,1}(\Omega)$, let $R_h u \in V_h^0$ denote its Ritz projection, defined by
\begin{align}
    \int_\Omega \nabla R_hu \cdot \nabla v_h \,\mathrm{d}x = \int_\Omega \nabla u \cdot \nabla v_h \,\mathrm{d}x \qquad \forall\, v_h \in V_h^0;
\end{align}
see, e.g., \cite{diening2024pointwise}. This is well defined since $\nabla v_h \in L^\infty(\Omega)$ for all $v_h \in V_h^0$. Moreover, since $\Omega$ is convex, the following estimate holds; see, e.g., \cite{brenner2008mathematical}.
\begin{prop}
    There exists a constant $C>0$, independent of $h$, such that for all $u \in H^2(\Omega) \cap H_0^1(\Omega)$,
    \begin{align}
        \|u - R_h u\|_{L^2(\Omega)} + h\|\nabla(u - R_hu)\|_{L^2(\Omega)} \le C h^2 \|u\|_{H^2(\Omega)}.
    \end{align}
\end{prop}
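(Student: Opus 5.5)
The estimate follows from the standard Galerkin argument: an $H^1$-bound via Céa's lemma, then an $L^2$-bound via Aubin--Nitsche duality, using convexity of $\Omega=(a,b)^d$ for $H^2$-regularity of the dual problem. Fix $u\in H^2(\Omega)\cap H_0^1(\Omega)$ and write $e:=u-R_hu$. The defining relation of $R_h$ gives the Galerkin orthogonality
\begin{align*}
\int_\Omega \nabla e\cdot\nabla v_h\,\mathrm{d}x=0 \qquad \forall\, v_h\in V_h^0 .
\end{align*}

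\textbf{Gradient estimate.} Since $R_hu$ is the $a(\cdot,\cdot)=(\nabla\cdot,\nabla\cdot)_{L^2(\Omega)}$-orthogonal projection onto $V_h^0$, it is the best approximation in the energy seminorm:
\begin{align*}
\|\nabla e\|_{L^2(\Omega)}\le \|\nabla(u-v_h)\|_{L^2(\Omega)} \qquad \forall\, v_h\in V_h^0 .
\end{align*}
Take $v_h=I_hu$. Because $u$ vanishes on $\partial\Omega$, the nodal values of $I_hu$ vanish at boundary vertices, so $I_hu\in V_h^0$. The interpolation estimate stated above then yields $\|\nabla e\|_{L^2(\Omega)}\le Ch\|u\|_{H^2(\Omega)}$.

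\textbf{$L^2$ estimate.} Let $w\in H_0^1(\Omega)$ solve $-\Delta w=e$ in $\Omega$. Since $\Omega$ is convex, the $H^2$-regularity result of \cite{grisvard2011elliptic} gives $\|w\|_{H^2(\Omega)}\le C\|e\|_{L^2(\Omega)}$. Testing with $e\in H_0^1(\Omega)$ and using Galerkin orthogonality with $v_h=I_hw\in V_h^0$,
\begin{align*}
\|e\|_{L^2(\Omega)}^2=\int_\Omega \nabla w\cdot\nabla e\,\mathrm{d}x=\int_\Omega \nabla (w-I_hw)\cdot\nabla e\,\mathrm{d}x\le Ch\|w\|_{H^2(\Omega)}\,\|\nabla e\|_{L^2(\Omega)}\le Ch^2\|e\|_{L^2(\Omega)}\|u\|_{H^2(\Omega)} .
\end{align*}
The last step uses the regularity bound for $w$ and the gradient estimate. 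Dividing by $\|e\|_{L^2(\Omega)}$ (the case $e=0$ is trivial) gives $\|e\|_{L^2(\Omega)}\le Ch^2\|u\|_{H^2(\Omega)}$.

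Adding the two bounds proves the proposition. The constants depend only on shape regularity and $\Omega$, not on $h$.

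The only step needing care is the duality step. It relies on the elliptic regularity constant on the convex domain, and on noting that $I_hw$ is admissible in $V_h^0$ because $w\in H^2(\Omega)\cap H_0^1(\Omega)$ is continuous and vanishes on $\partial\Omega$. No quasi-uniformity is needed for this estimate.
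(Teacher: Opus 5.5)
Your proposal is correct and is essentially the approach the paper relies on: the paper gives no proof of its own and cites \cite{brenner2008mathematical}, where the estimate is obtained by C\'ea's lemma with $v_h = I_h u$ followed by the Aubin--Nitsche duality argument. The convexity of $\Omega$ it mentions is exactly what supplies the $H^2$-regularity of your dual problem.
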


To establish convergence in $L^p$ and $W^{1,p}$, we need the following stability and error estimates for the Ritz projection. Here and below, we assume that $h$ is sufficiently small.
\begin{prop}\label{prop: stability of the Ritz projection}
    Let $1 < p \le \infty$. There exists a constant $C_p>0$, independent of $h$, such that for all $u \in W_0^{1,p}(\Omega)$,
    \begin{align}
        \|\nabla R_h u\|_{L^p(\Omega)} \le C_p \|\nabla u\|_{L^p(\Omega)}.
    \end{align}
    Moreover, for all $u \in L^{\infty}(\Omega) \cap H_0^1(\Omega)$,
    \begin{align}
        \|R_hu\|_{L^{\infty}(\Omega)} \le C |\ln h| \|u\|_{L^{\infty}(\Omega)}.
    \end{align}
\end{prop}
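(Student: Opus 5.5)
This is a classical result. The plan is to derive both estimates from weighted-norm (regularized Green's function) techniques, specialized to the convex polytope $\Omega=(a,b)^d$ and quasi-uniform meshes.

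\emph{Gradient estimate, $p=\infty$.} The $W^{1,\infty}$-stability $\|\nabla R_h u\|_{L^\infty}\le C\|\nabla u\|_{L^\infty}$ is the Rannacher--Scott type estimate. Fix an element $K_0$ and a point $x_0\in K_0$, and let $\delta_h$ be a smooth approximation of the delta at $x_0$ supported in $K_0$. Let $g$ be the regularized Green's function for $-\Delta$ with data $-\partial_{x_i}\delta_h$, and let $g_h=R_h g$. Then
\[
\partial_i R_h u(x_0)=\int_\Omega \nabla R_h u\cdot\nabla g_h\,\mathrm{d}x=\int_\Omega \nabla u\cdot\nabla g_h\,\mathrm{d}x ,
\]
so it suffices to show $\|\nabla g_h\|_{L^1(\Omega)}\le C$ uniformly in $h$. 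This follows from $\|\nabla g\|_{L^1}\le C$ together with $\|\nabla(g-g_h)\|_{L^1}\le C$. For the second bound I would use Cauchy--Schwarz with the weight $\sigma=(|x-x_0|^2+\kappa^2h^2)^{1/2}$:
\[
\|\nabla(g-g_h)\|_{L^1}\le \|\sigma^{-d/2-\epsilon}\|_{L^2}\,\|\sigma^{d/2+\epsilon}\nabla(g-g_h)\|_{L^2}.
\]
The weighted energy error is then bounded by a superapproximation/duality argument, with local $H^2$ estimates for $g$ of the form $\|\sigma^{d/2+1+\epsilon}D^2g\|_{L^2}\le C h^{\epsilon}$. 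These require $H^2$-regularity on the convex domain and weighted regularity for $g$ near $\partial\Omega$.

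\emph{The range $1<p<\infty$.} For $2\le p<\infty$ I would interpolate (Riesz--Thorin on the linear map $\nabla u\mapsto\nabla R_hu$, viewed on gradient fields via the Helmholtz/Hodge decomposition) between the trivial $L^2$-stability and the $L^\infty$-stability just obtained. Interpolating directly is awkward because the domain is a subspace. The cleaner route is to show the $L^\infty$ bound for $R_h$ applied to general vector data $f$ defined by $(\nabla R_h f,\nabla v_h)=(f,\nabla v_h)$; the Green's function proof works verbatim for this map. Interpolation then gives $2\le p\le\infty$. The case $1<p<2$ follows by duality:
\[
\|\nabla R_hu\|_{L^p}=\sup_{w}\frac{(\nabla R_hu,w)}{\|w\|_{L^{p'}}},
\]
and $(\nabla R_h u,w)=(\nabla R_hu,\nabla R_h w)=(\nabla u,\nabla R_hw)$ with the generalized projection, which bounds the quotient by $C\|\nabla u\|_{L^p}$ using the $p'$-estimate.

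\emph{$L^\infty$ bound with $|\ln h|$.} I would use the same Green's function device. Take $g$ with data $\delta_h$, so that $R_hu(x_0)=(\nabla u,\nabla g_h)$ up to the smoothing. This cannot be bounded via $\|\nabla u\|$, so instead write $(\nabla u,\nabla g_h)=-(u,\Delta_h g_h)$ elementwise plus jump terms, i.e. integrate by parts on each element. This gives
\[
|R_hu(x_0)|\le\|u\|_{L^\infty}\Bigl(\sum_K\|\Delta g_h\|_{L^1(K)}+\sum_{e}\|[\partial_n g_h]\|_{L^1(e)}\Bigr).
\]
Since $g_h$ is piecewise linear, $\Delta g_h=0$ on each element, and the jump terms are controlled by $Ch^{-1}\|\nabla(g_h-g)\|_{L^1}$ locally plus the smooth part, using inverse/trace estimates. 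The standard weighted analysis of the discrete Green's function gives this sum as $C|\ln h|$; the logarithm comes from $\int\sigma^{-d}\,\mathrm{d}x\sim|\ln h|$.

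\emph{Main obstacle.} The main obstacle is the weighted error estimate for the regularized Green's function on the polytope, including the boundary corners. For the proof I would cite the established literature (Rannacher--Scott; Schatz--Wahlbin; Guzm\'an--Leykekhman--Rossmann--Schatz, which treat convex polyhedra in $d=3$) rather than redo it.
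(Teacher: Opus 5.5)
\textbf{There is a genuine gap in how you get the range $1<p<\infty$ from the endpoint $p=\infty$.} Its source is already in your $p=\infty$ sketch.

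For $d\in\{2,3\}$ and $x_0$ away from $\partial\Omega$, the regularized Green's function $g$ with data $-\partial_{x_i}\delta_h$ behaves like a first derivative of the Green's function with pole $x_0$. So $|\nabla g|\sim|x-x_0|^{-d}$ for $h\lesssim|x-x_0|\lesssim 1$, which gives $\|\nabla g\|_{L^1(\Omega)}\sim|\ln h|$, not $O(1)$. Since $\|\nabla(g-g_h)\|_{L^1(\Omega)}\le C$, it follows that $\|\nabla g_h\|_{L^1(\Omega)}\gtrsim|\ln h|$. Bounding $(\nabla u,\nabla g_h)$ by $\|\nabla u\|_{L^\infty}\|\nabla g_h\|_{L^1}$ therefore only yields $W^{1,\infty}$-stability up to a factor $|\ln h|$.

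The Rannacher--Scott argument instead splits $(\nabla u,\nabla g_h)=(\nabla u,\nabla g)-(\nabla u,\nabla(g-g_h))$ and uses $(\nabla u,\nabla g)=(\partial_i u,\delta_h)$. That identity is available only because the data $\nabla u$ is a gradient. This is exactly why the proof does \emph{not} carry over verbatim to the map $P_h\colon\bm{F}\mapsto\nabla z_h$ with arbitrary vector data. In fact $P_h$ is not uniformly bounded on $[L^\infty(\Omega)]^d$. For smooth $\bm{F}$ one has $P_h\bm{F}\to\nabla w$, where $(\nabla w,\nabla v)_{L^2(\Omega)}=(\bm{F},\nabla v)_{L^2(\Omega)}$ for all $v\in H_0^1(\Omega)$. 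A uniform $L^\infty$ bound would pass to this limit and make the gradient part of the Helmholtz decomposition (locally a double Riesz transform) bounded on $L^\infty$, which it is not.

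Riesz--Thorin therefore has no valid $L^\infty$ endpoint. Interpolating the true bound $C|\ln h|$ with the $L^2$ bound gives only $C|\ln h|^{1-2/p}$. Your duality step for $1<p<2$ is correct in itself, but it rests on the $L^{p'}$-bound for $P_h$ that you took from this interpolation. Once $W^{1,p'}$-stability of $R_h$ is known, that bound does hold, since $P_h\bm{F}=\nabla R_hw$ and $\|\nabla w\|_{L^{p'}(\Omega)}\le C\|\bm{F}\|_{L^{p'}(\Omega)}$ by Lemma~\ref{lem: regularity}.

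To close the gap you have two options. You can take the $W^{1,p}$-stability for $1<p<\infty$ directly from the literature, as the paper does: \cite[Corollary~3.2]{diening2024pointwise} for $d=2,3$ and all $1<p\le\infty$, and \cite{rannacher1982some} for $d=2$ and $2\le p\le\infty$. Alternatively, you can interpolate $R_h$ as an operator on the Sobolev scale $W_0^{1,p}(\Omega)$. That requires an interpolation theorem for Sobolev spaces, not Riesz--Thorin on $L^p$ vector fields.

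Your sketch of the $L^\infty$ estimate, by contrast, is sound. Elementwise integration by parts reduces it to $\sum_e\|[\partial_n g_h]\|_{L^1(e)}\le C|\ln h|$. This follows from a scaled trace inequality together with $\|D^2g\|_{L^1(\Omega)}\le C|\ln h|$ and $\|\nabla(g-g_h)\|_{L^1(\Omega)}\le Ch|\ln h|$. It also applies directly to $u\in L^\infty(\Omega)\cap H_0^1(\Omega)$, so you do not need the density argument the paper uses to pass from smooth $u$.

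Two smaller points:
\begin{itemize}
\item You do not treat $d=1$. The paper handles it through $R_hu=I_hu$: then $(R_hu)'$ is the elementwise mean of $u'$, and both bounds are immediate, even without the logarithm.
\item The weighted bound $\|\sigma^{d/2+1+\epsilon}D^2g\|_{L^2}\le Ch^{\epsilon}$ is mis-scaled. By the same scaling, that quantity is of order one, whereas your Cauchy--Schwarz step needs $h\|\sigma^{d/2+\epsilon}D^2g\|_{L^2}\le Ch^{\epsilon}$.
\end{itemize}
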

The first estimate is proved in \cite[Corollary~3.2]{diening2024pointwise} for $d=2,3$ (see also \cite{rannacher1982some} for $2 \le p \le \infty$ and $d=2$). The second one is proved for smooth $u$ in \cite[Theorem~2]{schatz1980weak} for $d=2$ and in \cite[Theorem~12]{leykekhman2016finite} for $d=3$. By a standard density argument, it extends to all $u \in L^\infty(\Omega) \cap H_0^1(\Omega)$.

\begin{prop}\label{prop: p convergence of Ritz}
    Let $1 \le p < \infty$ and set $q := \max\{p,2\}$. There exists a constant $C_p>0$, independent of $h$, such that for all $u \in W^{2,q}(\Omega) \cap H_0^1(\Omega)$,
    \begin{align*}
        \|u - R_h u\|_{L^p(\Omega)} + h\|\nabla (u - R_hu)\|_{L^p(\Omega)} \le C_p h^2 \|u\|_{W^{2,q}(\Omega)}.
    \end{align*}
    Moreover, there exists a constant $C>0$, independent of $h$, such that for all $u \in W^{2,\infty}(\Omega) \cap H_0^1(\Omega)$,
    \begin{align*}
        \|u - R_h u\|_{L^{\infty}(\Omega)} \le C h^2 |\ln h|\|u\|_{W^{2,\infty}(\Omega)}, \qquad \|\nabla (u - R_hu)\|_{L^{\infty}(\Omega)} \le C h \|u\|_{W^{2,\infty}(\Omega)}.
    \end{align*}
\end{prop}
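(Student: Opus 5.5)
The two $W^{1,p}$-type bounds come from the $W^{1,p}$-stability in Proposition~\ref{prop: stability of the Ritz projection}, and the $L^p$ bounds from an Aubin--Nitsche duality argument. I treat the gradient estimates first.

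For $1<p<\infty$, Proposition~\ref{prop: stability of the Ritz projection} and the fact that $R_h$ reproduces $V_h^0$ give the quasi-optimality bound
\begin{align*}
\|\nabla(u-R_hu)\|_{L^p(\Omega)} \le (1+C_p)\inf_{v_h\in V_h^0}\|\nabla(u-v_h)\|_{L^p(\Omega)} \le (1+C_p)\|\nabla(u-I_hu)\|_{L^p(\Omega)}.
\end{align*}
The last term is at most $Ch|u|_{W^{2,p}(\Omega)}$ by the standard $W^{1,p}$ interpolation estimate, valid since $W^{2,p}\hookrightarrow C(\overline\Omega)$ for $p\ge 2$ and $d\le3$. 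For $p<2$ I first pass to $q=2$ using Hölder on the bounded domain, $\|\cdot\|_{L^p}\le C\|\cdot\|_{L^2}$; this is why $q=\max\{p,2\}$ appears. The case $p=1$ needs no stability at all: it follows from $L^1\le CL^2$ and the $H^1$ estimate of the preceding proposition. The $W^{1,\infty}$ case is identical, using the $p=\infty$ stability and $\|\nabla(u-I_hu)\|_{L^\infty}\le Ch\|u\|_{W^{2,\infty}}$.

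For the $L^p$ estimate with $2\le p<\infty$, I use duality. Let $p'$ be the conjugate exponent, take $g\in L^{p'}$ with $\|g\|_{L^{p'}}=1$, and let $z\in H_0^1$ solve $-\Delta z=g$. Galerkin orthogonality gives
\begin{align*}
(u-R_hu,g)=(\nabla(u-R_hu),\nabla(z-R_hz))\le \|\nabla(u-R_hu)\|_{L^p}\,\|\nabla(z-R_hz)\|_{L^{p'}}.
\end{align*}
This is the main obstacle. It needs $W^{2,p'}$ regularity $\|z\|_{W^{2,p'}}\le C\|g\|_{L^{p'}}$ on the convex domain, together with the gradient estimate for $p'\in(1,2]$, which gives $\|\nabla(z-R_hz)\|_{L^{p'}}\le Ch$. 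On the cube $(a,b)^d$ such regularity holds for $p'\le2$, by Grisvard for polygons and polyhedra with right angles; I would cite it as known. Combining the two factors gives $Ch^2\|u\|_{W^{2,p}}$. For $p<2$, Hölder reduces the claim to the $L^2$ estimate already available.

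For $L^\infty$, I would write $u-R_hu=(u-I_hu)-R_h(u-I_hu)$, which holds because $R_hI_hu=I_hu$. The $L^\infty$ stability in Proposition~\ref{prop: stability of the Ritz projection}, applied to $u-I_hu\in L^\infty\cap H_0^1$, then gives the bound $C|\ln h|\,\|u-I_hu\|_{L^\infty}\le Ch^2|\ln h|\,\|u\|_{W^{2,\infty}}$.
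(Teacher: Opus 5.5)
Your overall route is the paper's: $W^{1,p}$-stability of $R_h$ plus interpolation for the gradient bounds, an Aubin--Nitsche argument with $W^{2,p'}$-regularity for the $L^p$-bound when $2\le p<\infty$, and the identity $u-R_hu=(I-R_h)(u-I_hu)$ with the logarithmic $L^\infty$-stability.

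The step that fails as written is the dual bound $\|\nabla(z-R_hz)\|_{L^{p'}(\Omega)}\le Ch$. For exponents below $2$, the only gradient estimate you set up is the H\"older reduction to $L^2$, i.e., the proposition with $q=\max\{p',2\}=2$. That gives $\|\nabla(z-R_hz)\|_{L^{p'}(\Omega)}\le Ch\|z\|_{H^2(\Omega)}\le Ch\|g\|_{L^2(\Omega)}$. Since $\|g\|_{L^2(\Omega)}$ is unbounded on the unit ball of $L^{p'}(\Omega)$ for $p'<2$, the supremum over $g$ does not close.

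What you need is the genuinely $W^{1,p'}$ estimate $\|\nabla(z-R_hz)\|_{L^{p'}(\Omega)}\le Ch|z|_{W^{2,p'}(\Omega)}$. It follows from your quasi-optimality inequality (available since $p'>1$) combined with a $W^{1,p'}$ interpolation estimate for $W^{2,p'}$-functions. The nodal interpolant does not suffice here. For $d=3$ and $p'\le 3/2$ (i.e., $p\ge 3$), $W^{2,p'}(\Omega)\not\hookrightarrow C(\overline{\Omega})$, so $I_hz$ is undefined. You must use a boundary-preserving quasi-interpolant such as Scott--Zhang, for which $\inf_{v_h\in V_h^0}\|\nabla(z-v_h)\|_{L^{p'}(\Omega)}\le Ch|z|_{W^{2,p'}(\Omega)}$ holds for every $p'\ge 1$.

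A related technicality arises in the same step. For $d=3$ and $p>6$ one has $p'<6/5$, so $L^{p'}(\Omega)\not\hookrightarrow H^{-1}(\Omega)$, and $-\Delta z=g$ need not have a solution in $H_0^1(\Omega)$. Take the supremum over $g$ in the dense subspace $L^2(\Omega)$ instead. Then $z\in H^2(\Omega)\cap H_0^1(\Omega)$, the integration by parts and Galerkin orthogonality are justified, and $\|z\|_{W^{2,p'}(\Omega)}\le C\|g\|_{L^{p'}(\Omega)}$ still recovers $\|u-R_hu\|_{L^p(\Omega)}$ by density. With these two repairs the argument is complete.
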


For $1 \le p < 2$, the first estimate follows from the case $p=2$ and H\"older's inequality, since $\Omega$ is bounded. For $2 \le p < \infty$, it is proved in \cite{rannacher1982some} for $d=2$, and for $d=3$ it follows from Proposition~\ref{prop: stability of the Ritz projection}, standard interpolation estimates, and a duality argument based on the $W^{2,p'}$-regularity of the Laplacian on convex domains \cite{Dauge1988,grisvard2011elliptic}. The $L^\infty$-estimates follow from Proposition~\ref{prop: stability of the Ritz projection}, the identity $u - R_hu = (I - R_h)(u - I_hu)$, and standard interpolation estimates. For $d = 1$, the estimates in Propositions~\ref{prop: stability of the Ritz projection} and~\ref{prop: p convergence of Ritz} are elementary since $R_h u = I_h u$.

\subsection{Some technical results}

In this subsection, we collect some technical results that will be used in the subsequent analysis. The first one is a $W^{1,p}$-regularity result for the Poisson problem with a right-hand side in divergence form on convex domains; see \cite[Corollary~1]{fromm1993potential}.
\begin{lemma}\label{lem: regularity}
    Let $2 \le p < \infty$ and $\bm{F} \in [L^p(\Omega)]^d$, and let $u \in H_0^1(\Omega)$ satisfy
    \begin{align*}
        (\nabla u, \nabla v)_{L^2(\Omega)} = (\bm{F}, \nabla v)_{L^2(\Omega)} \qquad \forall\, v \in H_0^1(\Omega).
    \end{align*}
    Then $u \in W_0^{1,p}(\Omega)$, and there exists a constant $C_p>0$, independent of $u$ and $\bm{F}$, such that
    \begin{align*}
        \|\nabla u\|_{L^p(\Omega)} \le C_p \|\bm{F}\|_{L^p(\Omega)}.
    \end{align*}
\end{lemma}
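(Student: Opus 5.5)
Since $\Omega=(a,b)^d$ is a cube, I would prove the estimate by \emph{reflection to a torus}, which turns the Dirichlet problem into a periodic one where Fourier multiplier theory applies. General convex domains would need the potential-theoretic argument of \cite{fromm1993potential}; the reflection argument is tailored to the present setting.

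\textbf{The case $p=2$ and the reflection.} For $p=2$ the statement is immediate. Lax--Milgram gives a unique $u\in H_0^1(\Omega)$, and testing with $v=u$ yields $\|\nabla u\|_{L^2}\le\|\bm F\|_{L^2}$. For $p>2$, set $L:=b-a$ and translate to $\Omega=(0,L)^d$. Extend $u$ to $\tilde u$ on $Q:=(-L,L)^d$ by odd reflection in each variable $x_i$, and then periodically to the torus $\mathbb{T}^d:=\R^d/(2L\mathbb{Z})^d$. The zero trace makes $\tilde u\in H^1(\mathbb{T}^d)$. The component $\partial_i\tilde u$ is then even in $x_i$ and odd in $x_j$ for $j\neq i$. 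Accordingly, extend $F_i$ to $\tilde F_i$ with exactly this parity pattern. Then $\|\tilde{\bm F}\|_{L^p(\mathbb{T}^d)}^p=2^d\|\bm F\|_{L^p(\Omega)}^p$, and likewise for $\nabla\tilde u$.

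\textbf{Verifying the periodic equation.} I would show
\begin{align*}
(\nabla\tilde u,\nabla\phi)_{L^2(\mathbb{T}^d)}=(\tilde{\bm F},\nabla\phi)_{L^2(\mathbb{T}^d)}\qquad\forall\,\phi\in H^1(\mathbb{T}^d).
\end{align*}
Decompose $\phi$ into its $2^d$ parity components with respect to the reflections $x_i\mapsto-x_i$. Both integrands $\nabla\tilde u\cdot\nabla\phi$ and $\tilde{\bm F}\cdot\nabla\phi$ pair $\nabla\tilde u$ (resp.\ $\tilde{\bm F}$) with $\nabla\phi$. By the parity pattern above, every component of $\phi$ other than the one odd in all variables contributes zero. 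The fully odd component $\phi_o$ restricted to $\Omega$ vanishes on $\partial\Omega$ in the trace sense, so it lies in $H_0^1(\Omega)$. Its contribution on $Q$ is $2^d$ times the integral over $\Omega$, where the original weak equation applies.

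I expect this bookkeeping to be the main, though elementary, obstacle. One must check that odd reflection of an $H^1$ function with zero trace stays in $H^1$ across the faces. One must also check that the sign conventions for $\tilde F_i$ exactly match those of $\partial_i\tilde u$ in every octant.

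\textbf{The periodic estimate.} On the torus the equation reads $-\Delta\tilde u=-\operatorname{div}\tilde{\bm F}$ distributionally, with $\tilde u$ of zero mean because it is odd. In Fourier coefficients,
\begin{align*}
\widehat{\partial_j\tilde u}(k)=\frac{k_jk_l}{|k|^2}\,\widehat{\tilde F_l}(k)\qquad (k\neq 0),
\end{align*}
and the $k=0$ modes vanish. Thus $\nabla\tilde u=\mathcal R\tilde{\bm F}$ with $\mathcal R$ a matrix of products of periodic Riesz transforms. The symbols $k_jk_l/|k|^2$ are restrictions to $\mathbb{Z}^d\setminus\{0\}$ of smooth functions homogeneous of degree zero. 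By the Marcinkiewicz multiplier theorem (or transference from Calder\'on--Zygmund theory on $\R^d$), $\mathcal R$ is bounded on $L^p(\mathbb{T}^d)$ for $1<p<\infty$. Hence
\begin{align*}
\|\nabla u\|_{L^p(\Omega)}=2^{-d/p}\|\nabla\tilde u\|_{L^p(\mathbb{T}^d)}\le C_p\,2^{-d/p}\|\tilde{\bm F}\|_{L^p(\mathbb{T}^d)}=C_p\|\bm F\|_{L^p(\Omega)}.
\end{align*}
Since $u\in H_0^1(\Omega)$ has gradient in $L^p$ and $\Omega$ is Lipschitz, it follows that $u\in W_0^{1,p}(\Omega)$ (by Poincar\'e and the zero trace), which completes the proof.
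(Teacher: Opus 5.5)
Your proof is correct, but it takes a genuinely different route from the paper. The paper gives no argument of its own and simply invokes \cite[Corollary~1]{fromm1993potential}, a potential-theoretic $W^{1,p}$-estimate valid on arbitrary convex domains. You instead use that $\Omega=(a,b)^d$ is a box. Odd reflection turns the Dirichlet problem into a periodic one; that the reflection lies in $H^1$ is immediate by density of $C_c^\infty(\Omega)$ in $H_0^1(\Omega)$, since odd extensions of such functions are smooth on the torus. Your parity bookkeeping correctly shows that only the fully odd component of a test function survives. That component, restricted to $\Omega$, lies in $H_0^1(\Omega)$ because its trace on each face is both odd and single-valued. The estimate then reduces to $L^p(\mathbb{T}^d)$-boundedness of the periodic second-order Riesz transforms with symbols $k_jk_l/|k|^2$.

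Your route gives a self-contained proof, modulo classical multiplier theory. It is valid for every $1<p<\infty$ and every $d$, with a constant depending only on $p$ and $d$. The citation gives generality: for convex domains that are not boxes no reflection trick is available. There, convexity is what secures all $p<\infty$; the estimate fails for large $p$ near reentrant corners.

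Three small points should be tightened.
\begin{itemize}
\item For $d=1$, $\tilde F_1$ is even and need not have zero mean, so the multiplier must be set to $0$ at $k=0$. This is harmless, since $\nabla\tilde u$ has zero mean.
\item If you use transference from $\R^d$, first smooth the symbol near the origin, because de Leeuw's theorem needs continuity at the lattice points.
\item Your final step invokes Poincar\'e before knowing $u\in L^p(\Omega)$. Instead, note that $u\in L^6(\Omega)$ by Sobolev embedding and, if $p>6$, that $u\in W^{1,6}(\Omega)\hookrightarrow L^\infty(\Omega)$ for $d\le 3$. Hence $u\in W^{1,p}(\Omega)$ with zero trace, and so $u\in W_0^{1,p}(\Omega)$.
\end{itemize}
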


Combining Lemma~\ref{lem: regularity} with the stability of the Ritz projection, we obtain the following discrete regularity result.
\begin{lemma}\label{lem: general framework}
    Let $g \in L^2(\Omega)$ and $\bm{F} \in [L^6(\Omega)]^d$, and let $z_h \in V_h^0$ satisfy
    \begin{align*}
        (\nabla z_h, \nabla v_h)_{L^2(\Omega)} = (g, v_h)_{L^2(\Omega)} + (\bm{F}, \nabla v_h)_{L^2(\Omega)} \qquad \forall\, v_h \in V_h^0.
    \end{align*}
    Then there exists a constant $C>0$, independent of $h$, $g$, and $\bm{F}$, such that
    \begin{align*}
        \|\nabla z_h\|_{L^{6}(\Omega)}  \le C \big(\|g\|_{L^2(\Omega)} + \|\bm{F}\|_{L^6(\Omega)}\big).
    \end{align*}
\end{lemma}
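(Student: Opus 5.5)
The idea is to write $z_h$ as the Ritz projection of a continuous solution and then combine Lemma~\ref{lem: regularity} with Proposition~\ref{prop: stability of the Ritz projection}. Let $z \in H_0^1(\Omega)$ be the unique weak solution of
\begin{align*}
    (\nabla z, \nabla v)_{L^2(\Omega)} = (g, v)_{L^2(\Omega)} + (\bm{F}, \nabla v)_{L^2(\Omega)} \qquad \forall\, v \in H_0^1(\Omega).
\end{align*}
It exists by Lax--Milgram, since $g \in L^2(\Omega)$ and $\bm{F} \in [L^6(\Omega)]^d \subset [L^2(\Omega)]^d$ on the bounded domain $\Omega$. Testing with $v = v_h \in V_h^0$ and comparing with the equation for $z_h$ gives $(\nabla (z - z_h), \nabla v_h)_{L^2(\Omega)} = 0$ for all $v_h \in V_h^0$. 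Hence $z_h = R_h z$.

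The second step is to put the $g$-term into divergence form, so that Lemma~\ref{lem: regularity} applies. Let $w \in H_0^1(\Omega)$ solve $-\Delta w = g$. Since $\Omega$ is convex, $H^2$-regularity \cite{grisvard2011elliptic} gives $\|w\|_{H^2(\Omega)} \le C\|g\|_{L^2(\Omega)}$. For $d \le 3$ we have the embedding $H^1(\Omega) \hookrightarrow L^6(\Omega)$, so
\begin{align*}
    \|\nabla w\|_{L^6(\Omega)} \le C\|w\|_{H^2(\Omega)} \le C\|g\|_{L^2(\Omega)}.
\end{align*}
Integrating by parts, $(g, v)_{L^2(\Omega)} = (\nabla w, \nabla v)_{L^2(\Omega)}$ for all $v \in H_0^1(\Omega)$. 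Therefore $z$ satisfies the hypothesis of Lemma~\ref{lem: regularity} with right-hand side $\bm{G} := \nabla w + \bm{F} \in [L^6(\Omega)]^d$. Applying that lemma with $p = 6$ yields $z \in W_0^{1,6}(\Omega)$ and
\begin{align*}
    \|\nabla z\|_{L^6(\Omega)} \le C\big(\|g\|_{L^2(\Omega)} + \|\bm{F}\|_{L^6(\Omega)}\big).
\end{align*}

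The third step applies the first estimate of Proposition~\ref{prop: stability of the Ritz projection} with $p = 6$:
\begin{align*}
    \|\nabla z_h\|_{L^6(\Omega)} = \|\nabla R_h z\|_{L^6(\Omega)} \le C_6 \|\nabla z\|_{L^6(\Omega)}.
\end{align*}
Combining this with the previous bound gives the claim.

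The only delicate point is handling $g$. Lemma~\ref{lem: regularity} is stated only for divergence-form data, so $g$ has to be converted via the auxiliary $H^2$ solution $w$. This conversion is exactly where both the convexity of $\Omega$ and the restriction $d \le 3$ (through $H^1 \hookrightarrow L^6$) enter. For $d = 1$ everything is elementary, since there $R_h = I_h$.
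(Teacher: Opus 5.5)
Your proof is correct and follows essentially the same route as the paper: you write $z_h$ as the Ritz projection of the continuous solution, bound the $g$-contribution via $H^2$-regularity and $H^1(\Omega)\hookrightarrow L^6(\Omega)$, bound the $\bm{F}$-contribution via Lemma~\ref{lem: regularity} with $p=6$, and conclude with the $W^{1,6}$-stability of $R_h$. The only cosmetic difference is that you apply Lemma~\ref{lem: regularity} to the combined datum $\nabla w + \bm{F}$, whereas the paper uses linearity to write the continuous solution as $w_g + w_{\bm{F}}$ and applies that lemma to $\bm{F}$ alone, since $\nabla w_g \in L^6(\Omega)$ is already known.
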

\begin{proof}
     Let $w_g, w_{\bm{F}} \in H_0^1(\Omega)$ be the solutions of
    \begin{align*}
        (\nabla w_g, \nabla v)_{L^2(\Omega)} = (g, v)_{L^2(\Omega)}, \quad (\nabla w_{\bm{F}}, \nabla v)_{L^2(\Omega)} = (\bm{F}, \nabla v)_{L^2(\Omega)}, \quad \forall\, v \in H_0^1(\Omega).
    \end{align*}
    By the $H^2$-regularity on convex domains, Lemma \ref{lem: regularity} with $p  = 6$, and the Sobolev embedding $H^1(\Omega) \hookrightarrow L^6(\Omega)$ for $d \le 3$, we have
    \begin{align*}
        \|\nabla w_g\|_{L^6(\Omega)} \le C \|w_g\|_{H^2(\Omega)} \le C \|g\|_{L^2(\Omega)}, \qquad \|\nabla w_{\bm F}\|_{L^6(\Omega)} \le C \|\bm{F}\|_{L^6(\Omega)}.
    \end{align*}
    By the definition of $R_h$, for all $v_h \in V_h^0$,
    \begin{align*}
        (\nabla R_h(w_g + w_{\bm{F}}), \nabla v_h)_{L^2(\Omega)} = (g, v_h)_{L^2(\Omega)} + (\bm{F}, \nabla v_h)_{L^2(\Omega)},
    \end{align*}
    and hence $z_h = R_h(w_g + w_{\bm{F}})$. By Proposition~\ref{prop: stability of the Ritz projection} with $p = 6$, we obtain
    \begin{align*}
        \|\nabla z_h\|_{L^6(\Omega)} \le C \|\nabla (w_g + w_{\bm{F}}) \|_{L^6(\Omega)} \le C (\|g\|_{L^2(\Omega)} + \|\bm{F}\|_{L^6(\Omega)}),
    \end{align*}
    which completes the proof.
\end{proof}

For a fixed $u \in H_0^1(\Omega)$, we define the bilinear form
\begin{align*}
    a_u(v,w) := \int_{\Omega} \left( \nabla v \cdot \nabla w + Vvw + \beta |u|^2 vw \right) \mathrm{d}x, \qquad v, w \in H_0^1(\Omega),
\end{align*}
which is an inner product on $H_0^1(\Omega)$, and we denote by $\|\cdot\|_{a_u}$ the induced norm.

Let $H \subset H_0^1(\Omega)$ be a nontrivial closed subspace, and let $u_H \in H \cap \M$ be a minimizer of $E$ on $H \cap \M$, i.e.,
\begin{align*}
    E(u_H) = \min_{v \in H \cap \M} E(v).
\end{align*}
Then $u_H$ satisfies the Euler--Lagrange equation
\begin{align}\label{equ: GPEVP on H}
    a_{u_H}(u_H,v) = \lambda_H (u_H,v)_{L^2(\Omega)} \qquad \forall\, v \in H,
\end{align}
where $\lambda_H := a_{u_H}(u_H,u_H)$ is the associated Lagrange multiplier.

\begin{lemma}\label{lem: energy minus on H}
    For every $u \in H \cap \M$, the following identity holds:
    \begin{align}\label{equ: energy difference}
        E(u) - E(u_H) = \frac12\big(a_{u_H}(u,u) - \lambda_H\big) + \frac{\beta}{4}\|u^2 - u_H^2\|_{L^2(\Omega)}^2.
    \end{align}
\end{lemma}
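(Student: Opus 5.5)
The identity is a purely algebraic computation; I will expand both sides in terms of the quadratic and quartic parts of the energy. Write
\begin{align*}
    E(v) = \frac12 \int_\Omega \big(|\nabla v|^2 + V v^2\big)\,\mathrm{d}x + \frac{\beta}{4}\int_\Omega v^4 \,\mathrm{d}x,
\end{align*}
and note that
\begin{align*}
    a_{u_H}(u,u) = \int_\Omega \big(|\nabla u|^2 + V u^2\big)\,\mathrm{d}x + \beta\int_\Omega u_H^2 u^2\,\mathrm{d}x .
\end{align*}
Hence
\begin{align*}
    \frac12 a_{u_H}(u,u) = E(u) - \frac{\beta}{4}\int_\Omega u^4\,\mathrm{d}x + \frac{\beta}{2}\int_\Omega u_H^2u^2\,\mathrm{d}x .
\end{align*}
The first step is to express $E(u_H)$ through $\lambda_H$. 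Since $\lambda_H = a_{u_H}(u_H,u_H)$, we have
\begin{align*}
    E(u_H) = \frac12\lambda_H - \frac{\beta}{4}\int_\Omega u_H^4\,\mathrm{d}x .
\end{align*}
This follows because $a_{u_H}(u_H,u_H)$ contains $\beta\int u_H^4$, while the energy contains only $\frac{\beta}{4}\int u_H^4$. Only the definition of $\lambda_H$ is needed here. The Euler--Lagrange equation \eqref{equ: GPEVP on H} is not needed, because the normalization $\|u_H\|_{L^2(\Omega)}=1$ is already built into $\lambda_H = a_{u_H}(u_H,u_H)$.

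Next I subtract the two expressions:
\begin{align*}
    E(u) - E(u_H) - \frac12\big(a_{u_H}(u,u) - \lambda_H\big) = \frac{\beta}{4}\int_\Omega \big(u^4 - 2u_H^2u^2 + u_H^4\big)\,\mathrm{d}x = \frac{\beta}{4}\|u^2-u_H^2\|_{L^2(\Omega)}^2 .
\end{align*}
This gives \eqref{equ: energy difference}.

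All the integrals are finite because $H_0^1(\Omega)\hookrightarrow L^4(\Omega)$ for $d\le3$ and $V\in L^\infty(\Omega)$. The hypothesis $u\in H\cap\M$ is not actually used beyond $u\in H_0^1(\Omega)$. There is no real obstacle; the only point needing care is tracking the factor $\frac12$ against $\frac14$ in the quartic terms, so that the cross term $-\frac{\beta}{2}\int u_H^2u^2$ completes the square correctly.
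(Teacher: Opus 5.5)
Your proof is correct and is essentially the paper's argument: you write $E(u_H) = \frac12\lambda_H - \frac{\beta}{4}\int_\Omega u_H^4\,\mathrm{d}x$ using only the definition $\lambda_H = a_{u_H}(u_H,u_H)$, and then add and subtract $\frac{\beta}{2}\int_\Omega u_H^2u^2\,\mathrm{d}x$ to complete the square. One small correction: the normalization $\|u_H\|_{L^2(\Omega)}=1$ is not what makes the Euler--Lagrange equation unnecessary here; the identity for $E(u_H)$ holds with no normalization at all, since it follows directly from the definition of $\lambda_H$.
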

\begin{proof}
    By the definition of $\lambda_H$, we have
    \begin{align*}
        \lambda_H = \int_{\Omega} \big(|\nabla u_H|^2 + V u_H^2 + \beta u_H^4\big)\,\mathrm{d}x,
    \end{align*}
    and hence
    \begin{align*}
        E(u_H) = \frac{\lambda_H}{2} - \frac{\beta}{4}\int_{\Omega} u_H^4 \,\mathrm{d}x.
    \end{align*}
    Adding and subtracting $\frac{\beta}{2}\int_{\Omega} u_H^2 u^2 \,\mathrm{d}x$, we obtain
    \begin{align*}
        E(u) - E(u_H) =& \int_{\Omega} \frac12 |\nabla u|^2 + \frac12 Vu^2 + \frac{\beta}{2}u_H^2u^2 \, \text{d}x \\
        & + \int_{\Omega} \frac{\beta}{4}u^4 - \frac{\beta}{2}u_H^2u^2 + \frac{\beta}{4} u_H^4 \, \text{d}x - \frac{\lambda_H}{2} \\
         = &  \frac12(a_{u_H}(u,u) - \lambda_H) + \frac{\beta}{4} \int_{\Omega} (u^2 - u_H^2)^2 \, \text{d}x,
    \end{align*}
    which completes the proof.
\end{proof}

Consider the following linear eigenvalue problem on $H$: find $(w,\mu) \in H \times \R$ such that
\begin{align}\label{LEVP on H}
    (\nabla w, \nabla v)_{L^2(\Omega)} + (Vw,v)_{L^2(\Omega)} + \beta (|u_H|^2 w,v)_{L^2(\Omega)} = \mu (w,v)_{L^2(\Omega)} \qquad \forall\, v \in H.
\end{align}
It follows from \eqref{equ: GPEVP on H} that $\lambda_H$ is an eigenvalue of \eqref{LEVP on H} with corresponding eigenfunction $u_H$. Moreover, the following result holds.
\begin{lemma}\label{lem: technical lemma 1}
    Assume that $\lambda_H$ is the smallest eigenvalue of \eqref{LEVP on H} and that it is simple, and let $\mu_2 > \lambda_H$ denote the second smallest eigenvalue of \eqref{LEVP on H}. Then for every $u \in H \cap \M$,
    \begin{align}\label{equ: technical lemma 1}
        \min_{s \in \{1,-1\}} \|u - s u_H\|_{a_{u_H}}^2 \le \frac{2(\mu_2 + \lambda_H)}{\mu_2 - \lambda_H} \big(E(u) - E(u_H)\big).
    \end{align}
\end{lemma}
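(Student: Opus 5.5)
The plan is to combine the energy identity of Lemma~\ref{lem: energy minus on H} with a spectral gap estimate for the linear operator associated with $a_{u_H}$ on $H$. Fix $u \in H \cap \M$. Since $\beta \ge 0$, Lemma~\ref{lem: energy minus on H} gives
\begin{align*}
    a_{u_H}(u,u) - \lambda_H \le 2\big(E(u) - E(u_H)\big).
\end{align*}
It therefore suffices to show
\begin{align*}
    \min_{s\in\{1,-1\}} \|u - s u_H\|_{a_{u_H}}^2 \le \frac{\mu_2 + \lambda_H}{\mu_2 - \lambda_H}\big(a_{u_H}(u,u) - \lambda_H\big),
\end{align*}
which is a purely linear statement about the eigenvalue problem \eqref{LEVP on H}.

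To prove this, decompose $u = \alpha u_H + w$ with $\alpha := (u,u_H)_{L^2(\Omega)}$ and $(w,u_H)_{L^2(\Omega)} = 0$. Choose $s = \operatorname{sign}(\alpha)$, taking $s=1$ if $\alpha = 0$, so that $|\alpha| \le 1$ and $\|w\|_{L^2}^2 = 1-\alpha^2$. Since $u_H$ is an eigenfunction, \eqref{LEVP on H} gives $a_{u_H}(u_H,w) = \lambda_H (u_H,w)_{L^2} = 0$. Because $\lambda_H$ is simple and smallest, the min-max characterization on the $a_{u_H}$-orthogonal complement of $u_H$ in $H$ yields $a_{u_H}(w,w) \ge \mu_2 \|w\|_{L^2}^2$. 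This holds since $H$ is closed in $H_0^1$ and the operator has compact resolvent, or alternatively via the Rayleigh quotient infimum, which needs no discreteness. Consequently
\begin{align*}
    a_{u_H}(u,u) - \lambda_H = \alpha^2\lambda_H + a_{u_H}(w,w) - \lambda_H = a_{u_H}(w,w) - \lambda_H(1-\alpha^2) \ge \Big(1 - \frac{\lambda_H}{\mu_2}\Big) a_{u_H}(w,w).
\end{align*}

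On the other side,
\begin{align*}
    \|u - su_H\|_{a_{u_H}}^2 = (1-|\alpha|)^2\lambda_H + a_{u_H}(w,w).
\end{align*}
Using $(1-|\alpha|)^2 \le (1-|\alpha|)(1+|\alpha|) = 1-\alpha^2 = \|w\|_{L^2}^2 \le a_{u_H}(w,w)/\mu_2$, we get
\begin{align*}
    \|u-su_H\|_{a_{u_H}}^2 \le \Big(1+\frac{\lambda_H}{\mu_2}\Big)a_{u_H}(w,w).
\end{align*}
Combining the two bounds gives the ratio $(\mu_2+\lambda_H)/(\mu_2-\lambda_H)$, and multiplying by $2$ from the energy identity yields \eqref{equ: technical lemma 1}. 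The constant matches the statement, which confirms this route.

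The only delicate step is justifying $a_{u_H}(w,w) \ge \mu_2\|w\|_{L^2}^2$ for $w \perp u_H$ in a possibly infinite-dimensional closed subspace $H$. I would define $\mu_2$ as the infimum of the Rayleigh quotient over $H \cap u_H^\perp$ and note that, by simplicity and the compact embedding $H \hookrightarrow L^2$, this infimum is attained and equals the second eigenvalue. Positivity $\lambda_H > 0$ follows from $a_{u_H}$ being an inner product.
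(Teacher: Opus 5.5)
Your proposal is correct and follows the paper's proof essentially step by step. It uses the same orthogonal decomposition (the paper writes it for $\tilde u = su$ with $t = |(u,u_H)_{L^2(\Omega)}|$), the same identity $a_{u_H}(u_H,w)=0$, the Rayleigh-quotient bound $\|w\|_{L^2(\Omega)}^2 \le \mu_2^{-1} a_{u_H}(w,w)$, the inequality $(1-|\alpha|)^2 \le 1-\alpha^2$, and the final combination with Lemma~\ref{lem: energy minus on H}; your extra justification of the min-max characterization of $\mu_2$ on a possibly infinite-dimensional $H$ is a harmless refinement of the paper's citation of Courant--Fischer.
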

\begin{proof}
    Fix $u \in H \cap \M$, set $t := |(u,u_H)_{L^2(\Omega)}|$, and choose $s \in \{-1,1\}$ such that $(su, u_H)_{L^2(\Omega)} = t$. Let $\tilde{u} := su$ and $w := \tilde{u} - t u_H \in H$. Since $\|u\|_{L^2(\Omega)} = \|u_H\|_{L^2(\Omega)} = 1$, the Cauchy--Schwarz inequality yields $0 \le t \le 1$. Moreover,
    \begin{align*}
        (w,u_H)_{L^2(\Omega)} = (\tilde{u},u_H)_{L^2(\Omega)} - t\|u_H\|_{L^2(\Omega)}^2 = t - t = 0,
    \end{align*}
    and hence
    \begin{align*}
        1 = \|\tilde{u}\|_{L^2(\Omega)}^2 = \|w + t u_H\|_{L^2(\Omega)}^2 = \|w\|_{L^2(\Omega)}^2 + t^2.
    \end{align*}
    Taking $v = w \in H$ in \eqref{equ: GPEVP on H} and using $(w,u_H)_{L^2(\Omega)} = 0$, we obtain
    \begin{align*}
        a_{u_H}(u_H, w) = \lambda_H(u_H,w)_{L^2(\Omega)} = 0.
    \end{align*}
    Together with $a_{u_H}(u_H,u_H) = \lambda_H$ and $\|w\|_{L^2(\Omega)}^2 = 1 - t^2$, this yields
    \begin{align*}
        a_{u_H}(\tilde{u},\tilde{u}) - \lambda_H & = a_{u_H}(w + tu_H,w + tu_H) - \lambda_H \\
        & = a_{u_H}(w,w) + 2t\,a_{u_H}(w,u_H) + t^2a_{u_H}(u_H,u_H) - \lambda_H \\
        & = a_{u_H}(w,w) - \lambda_H(1 - t^2) = a_{u_H}(w,w) - \lambda_H \|w\|_{L^2(\Omega)}^2.
    \end{align*}
    Since $\lambda_H$ is the smallest eigenvalue of \eqref{LEVP on H} with eigenfunction $u_H$, the Courant--Fischer min-max principle~\cite{babuska1991eigenvalue} yields
    \begin{align*}
        \mu_2 = \inf_{0 \neq z \in H,\, (z,u_H)_{L^2(\Omega)} = 0} \frac{a_{u_H}(z,z)}{\|z\|_{L^2(\Omega)}^2},
    \end{align*}
    and hence $\|w\|_{L^2(\Omega)}^2 \le \mu_2^{-1} a_{u_H}(w,w)$. Consequently,
    \begin{align*}
        a_{u_H}(\tilde{u},\tilde{u}) - \lambda_H \ge \left(1 - \frac{\lambda_H}{\mu_2}\right)a_{u_H}(w,w).
    \end{align*}
    Since $E$ is even, i.e., $E(u) = E(\tilde{u})$, Lemma~\ref{lem: energy minus on H} together with the above estimate yields
    \begin{align*}
        E(u) - E(u_H) = E(\tilde{u}) - E(u_H) \ge \frac12\big(a_{u_H}(\tilde{u},\tilde{u}) - \lambda_H\big) \ge \frac{1}{2}\left(1 - \frac{\lambda_H}{\mu_2}\right)a_{u_H}(w,w).
    \end{align*}
    
    On the other hand, using $a_{u_H}(u_H,w) = 0$ and $a_{u_H}(u_H,u_H) = \lambda_H$, we obtain
    \begin{align*}
        \|\tilde{u} - u_H\|_{a_{u_H}}^2 = a_{u_H}(w,w) + \lambda_H(1-t)^2.
    \end{align*}
    Since $0 \le t \le 1$, we have $(1-t)^2 \le 1 - t^2 = \|w\|_{L^2(\Omega)}^2$. Together with $\lambda_H > 0$ and $\|w\|_{L^2(\Omega)}^2 \le \mu_2^{-1} a_{u_H}(w,w)$, this yields
    \begin{align*}
        \|\tilde{u} - u_H\|_{a_{u_H}}^2 = a_{u_H}(w,w) + \lambda_H(1-t)^2 \le \left(1 + \frac{\lambda_H}{\mu_2}\right)a_{u_H}(w,w).
    \end{align*}
    Combining this with the lower bound for $E(u) - E(u_H)$ above, we arrive at
    \begin{align*}
        \|\tilde{u} - u_H\|_{a_{u_H}}^2 \le \frac{2(\mu_2 + \lambda_H)}{\mu_2 - \lambda_H} \big( E(u) - E(u_H) \big).
    \end{align*}
    Since $\|\tilde{u} - u_H\|_{a_{u_H}} = \|u - s u_H\|_{a_{u_H}}$, this completes the proof.
\end{proof}

\begin{coro}\label{coro: uniqueness on H}
    Under the assumptions of Lemma~\ref{lem: technical lemma 1}, the minimizer $u_H$ of $E$ on $H \cap \M$ is unique up to sign, i.e., every minimizer $u$ of $E$ on $H \cap \M$ satisfies $u = u_H$ or $u = -u_H$.
\end{coro}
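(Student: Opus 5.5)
The corollary follows directly from Lemma~\ref{lem: technical lemma 1}. Let $u \in H \cap \M$ be any minimizer of $E$ on $H \cap \M$. Since $u_H$ is also a minimizer, $E(u) = E(u_H)$, so the right-hand side of \eqref{equ: technical lemma 1} vanishes. The assumptions give $\mu_2 > \lambda_H$, and $\lambda_H = a_{u_H}(u_H,u_H) > 0$ because $a_{u_H}$ is an inner product and $u_H \neq 0$. Hence the prefactor $2(\mu_2+\lambda_H)/(\mu_2-\lambda_H)$ is finite and positive. It follows that
\begin{align*}
    \min_{s \in \{1,-1\}} \|u - s u_H\|_{a_{u_H}}^2 \le 0 .
\end{align*}

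Next I would pick $s \in \{1,-1\}$ attaining the minimum, which exists since the set is finite. For this $s$ we get $\|u - s u_H\|_{a_{u_H}} = 0$. Because $\|\cdot\|_{a_{u_H}}$ is a norm on $H_0^1(\Omega)$, this forces $u = s u_H$, that is, $u = u_H$ or $u = -u_H$. Conversely, $-u_H$ is indeed a minimizer, since $E$ is even and $-u_H \in H \cap \M$, so uniqueness up to sign is exactly the statement.

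The only point that needs care is that $\|\cdot\|_{a_{u_H}}$ is a genuine norm, i.e., it is definite. This holds because $V \ge 0$ and $\beta \ge 0$ give $a_{u_H}(v,v) \ge \|\nabla v\|_{L^2(\Omega)}^2$, and the Poincaré inequality on $H_0^1(\Omega)$ then makes $a_{u_H}(v,v)=0$ imply $v=0$. The paper already states that $a_u$ is an inner product, so I do not expect a real obstacle.
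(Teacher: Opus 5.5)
Your proposal is correct and is exactly the argument the paper intends; the paper states the corollary without proof, as an immediate consequence of Lemma~\ref{lem: technical lemma 1}. A second minimizer $u$ satisfies $E(u)=E(u_H)$, so the right-hand side of that lemma vanishes, and definiteness of $\|\cdot\|_{a_{u_H}}$ (from $V\ge 0$, $\beta\ge 0$ and Poincar\'e) then forces $u=\pm u_H$.
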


\section{The standard finite element method}\label{sec: standard FEM}

A discrete ground state of the standard FEM is defined as
\begin{align*}
    u_h^* \in \argmin_{v_h \in V_h^0 \cap \M} E(v_h) \qquad \text{with} \qquad (u_h^*,u^*)_{L^2(\Omega)} \ge 0.
\end{align*}
Taking $H = V_h^0$ in \eqref{equ: GPEVP on H}, we see that $u_h^*$ satisfies the discrete Euler--Lagrange equation
\begin{align}\label{equ: discrete GPEVP}
    a_{u_h^*}(u_h^*,v_h) = \lambda_h^*(u_h^*,v_h)_{L^2(\Omega)} \qquad \forall\, v_h \in V_h^0,
\end{align}
where $\lambda_h^* := a_{u_h^*}(u_h^*,u_h^*)$ is the associated Lagrange multiplier.

\subsection{Uniqueness of the discrete ground state}

Since $V_h^0 \cap \M$ is compact in the finite-dimensional space $V_h^0$ and $E$ is continuous, a discrete ground state $u_h^*$ exists. Moreover, with the sign chosen as above, it converges to the ground state $u^*$ in $H^1(\Omega)$; see, e.g., \cite{cances2010numerical,chen2011finite,zhou2004analysis}:
\begin{align}\label{equ: convergence of standard FEM}
    \|u_h^* - u^*\|_{H^1(\Omega)} \to 0 \qquad \text{as } h \to 0.
\end{align}
As a consequence, the bilinear forms $a_{u_h^*}(\cdot,\cdot)$ and $a_{u^*}(\cdot,\cdot)$ are equivalent, uniformly in $h$, for sufficiently small $h$.
\begin{lemma}\label{lem: equivalence of a}
    There exists a sequence $\epsilon_h \ge 0$ with $\epsilon_h \to 0$ as $h \to 0$ such that for all $v \in H_0^1(\Omega)$,
    \begin{align*}
        (1 - \epsilon_h)\, a_{u^*}(v,v) \le a_{u_h^*}(v,v) \le (1 + \epsilon_h)\, a_{u^*}(v,v).
    \end{align*}
\end{lemma}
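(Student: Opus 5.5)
The two forms differ only in the cubic-weight term. For every $v \in H_0^1(\Omega)$ we have
\begin{align*}
    a_{u_h^*}(v,v) - a_{u^*}(v,v) = \beta \int_\Omega \big(|u_h^*|^2 - |u^*|^2\big) v^2 \,\mathrm{d}x .
\end{align*}
The plan is to bound this difference by a quantity $\delta_h \to 0$ times $a_{u^*}(v,v)$. We then set $\epsilon_h := \delta_h$ and obtain both inequalities at once.

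\textbf{Step 1: bound the difference.} Write $|u_h^*|^2 - |u^*|^2 = (u_h^* - u^*)(u_h^* + u^*)$. Apply H\"older's inequality with exponents $(6,6,3/2)$:
\begin{align*}
    \Big| \int_\Omega (u_h^*-u^*)(u_h^*+u^*) v^2 \,\mathrm{d}x \Big| \le \|u_h^*-u^*\|_{L^6(\Omega)} \|u_h^*+u^*\|_{L^6(\Omega)} \|v\|_{L^3(\Omega)}^2 .
\end{align*}
Since $\Omega$ is bounded, $\|v\|_{L^3} \le C\|v\|_{L^6}$. The Sobolev embedding $H_0^1(\Omega) \hookrightarrow L^6(\Omega)$ for $d \le 3$ together with Poincar\'e's inequality then gives $\|v\|_{L^6} \le C\|\nabla v\|_{L^2}$. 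Hence
\begin{align*}
    \|v\|_{L^3(\Omega)}^2 \le C\|\nabla v\|_{L^2(\Omega)}^2 \le C\, a_{u^*}(v,v),
\end{align*}
where the last step uses $V \ge 0$ and $\beta \ge 0$.

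\textbf{Step 2: control the $u_h^*$ factors.} The same embedding and \eqref{equ: convergence of standard FEM} give $\|u_h^*-u^*\|_{L^6} \le C\|u_h^*-u^*\|_{H^1} \to 0$. Moreover, $\|u_h^*+u^*\|_{L^6} \le 2\|u^*\|_{L^6} + \|u_h^*-u^*\|_{L^6}$ is uniformly bounded for small $h$.

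\textbf{Step 3: conclude.} Define
\begin{align*}
    \epsilon_h := C\beta \|u_h^*-u^*\|_{H^1(\Omega)} \big(2\|u^*\|_{L^6(\Omega)} + \|u_h^*-u^*\|_{L^6(\Omega)}\big).
\end{align*}
This quantity is nonnegative and tends to $0$ as $h \to 0$. By Steps 1 and 2, $|a_{u_h^*}(v,v) - a_{u^*}(v,v)| \le \epsilon_h\, a_{u^*}(v,v)$, which yields both inequalities.

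There is no real obstacle. The only point needing care is that the constant must be uniform in $v$, which is why we bound $\|v\|_{L^3}^2$ by $a_{u^*}(v,v)$ using only $\nabla v$. The case $\beta = 0$ is trivial with $\epsilon_h = 0$.
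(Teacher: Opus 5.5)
Your proposal is correct and follows essentially the same route as the paper: factor $|u_h^*|^2-|u^*|^2=(u_h^*-u^*)(u_h^*+u^*)$, apply H\"older together with the embedding $H_0^1(\Omega)\hookrightarrow L^6(\Omega)$, bound by $\|\nabla v\|_{L^2(\Omega)}^2\le a_{u^*}(v,v)$, and define $\epsilon_h$ from the $H^1$-convergence \eqref{equ: convergence of standard FEM}. The only difference is that you spell out the H\"older exponents and the uniform bound on $\|u_h^*+u^*\|_{L^6(\Omega)}$, which the paper leaves implicit.
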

\begin{proof}
    For any $v \in H_0^1(\Omega)$, it follows from H\"older's inequality and the Sobolev embedding $H_0^1(\Omega) \hookrightarrow L^6(\Omega)$ that
    \begin{align*}
        |a_{u_h^*}(v,v) - a_{u^*}(v,v)| &= \beta \Big|\int_\Omega (u_h^* + u^*)(u_h^* - u^*) v^2 \,\mathrm{d}x\Big| \\
        &\le C\beta \|u_h^* + u^*\|_{H^1(\Omega)}\|u_h^* - u^*\|_{H^1(\Omega)}\|\nabla v\|_{L^2(\Omega)}^2.
    \end{align*}
    Setting $\epsilon_h := C\beta \|u_h^* + u^*\|_{H^1(\Omega)}\|u_h^* - u^*\|_{H^1(\Omega)}$ and using $\|\nabla v\|_{L^2(\Omega)}^2 \le a_{u^*}(v,v)$, we obtain
    \begin{align*}
        |a_{u_h^*}(v,v) - a_{u^*}(v,v)| \le \epsilon_h\, a_{u^*}(v,v).
    \end{align*}
    Since $\|u_h^*\|_{H^1(\Omega)}$ is uniformly bounded, \eqref{equ: convergence of standard FEM} implies $\epsilon_h \to 0$ as $h \to 0$, which completes the proof.
\end{proof}

Let $\Lambda_{1,h} \le \Lambda_{2,h} \le \cdots$ denote the eigenvalues of the following linear eigenvalue problem: find $(w_h, \Lambda_h) \in V_h^0 \times \R$ with $w_h \neq 0$ such that
\begin{align*}
    a_{u_h^*}(w_h,v_h) = \Lambda_h(w_h,v_h)_{L^2(\Omega)} \qquad \forall\, v_h \in V_h^0.
\end{align*}
It follows from \eqref{equ: discrete GPEVP} that $(\lambda_h^*, u_h^*)$ is an eigenpair of this problem. Moreover, the following result holds.
\begin{lemma}\label{lem: discrete eigengap}
    There exists $h_0 > 0$ such that for all $0<h \le h_0$,
    \begin{align*}
        \Lambda_{1,h} = \lambda_h^* < \Lambda_{2,h}.
    \end{align*}
\end{lemma}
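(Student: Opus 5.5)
We plan to compare the discrete linear eigenvalue problem with the continuous linear problem \eqref{equ: LEVP}, whose first eigenvalue $\mu_1=\lambda^*$ is simple by Proposition~\ref{prop: simple eigenvalue}. Denote by $\mu_2>\mu_1$ its second eigenvalue. The claim splits into two parts: first $\Lambda_{1,h}=\lambda_h^*$, and then $\lambda_h^*<\Lambda_{2,h}$. We will obtain both from limits as $h\to0$, namely $\lambda_h^*\to\lambda^*$ and $\liminf_{h\to0}\Lambda_{2,h}\ge\mu_2$.

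\textbf{Step 1: $\lambda_h^*\to\lambda^*$.} Write $\lambda_h^*=a_{u_h^*}(u_h^*,u_h^*)=\|\nabla u_h^*\|^2+(Vu_h^*,u_h^*)+\beta\|u_h^*\|_{L^4}^4$. By \eqref{equ: convergence of standard FEM} and the embedding $H^1_0\hookrightarrow L^4$, this converges to $a_{u^*}(u^*,u^*)=\lambda^*$.

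\textbf{Step 2: lower bound for $\Lambda_{2,h}$.} Use Courant--Fischer on $V_h^0$ together with Lemma~\ref{lem: equivalence of a}. This gives
\[
\Lambda_{2,h}=\min_{\dim S=2,\,S\subset V_h^0}\max_{0\ne v\in S}\frac{a_{u_h^*}(v,v)}{\|v\|^2}\ge(1-\epsilon_h)\min_{\dim S=2,\,S\subset V_h^0}\max_{0\ne v\in S}\frac{a_{u^*}(v,v)}{\|v\|^2}.
\]
The last min-max is the second eigenvalue of the Galerkin discretization of \eqref{equ: LEVP} on $V_h^0$. Since $V_h^0\subset H_0^1(\Omega)$, Courant--Fischer shows it is at least $\mu_2$. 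Hence $\Lambda_{2,h}\ge(1-\epsilon_h)\mu_2$.

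\textbf{Step 3: conclusion.} Because $\lambda^*<\mu_2$ and $\epsilon_h\to0$, we can choose $h_0$ such that for $h\le h_0$
\[
\lambda_h^*<(1-\epsilon_h)\mu_2\le\Lambda_{2,h}.
\]
Now $\lambda_h^*$ is some eigenvalue $\Lambda_{k,h}$ of the discrete problem. Since $\lambda_h^*<\Lambda_{2,h}$, we must have $k=1$, so $\Lambda_{1,h}=\lambda_h^*$. This yields $\Lambda_{1,h}=\lambda_h^*<\Lambda_{2,h}$.

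The main point needing care is Step 2. There one must check that the comparison of Rayleigh quotients is uniform in $v$, which is exactly what Lemma~\ref{lem: equivalence of a} provides. One also needs the conformity of $V_h^0$ to pass from the discrete min-max to $\mu_2$. The remaining steps are routine.
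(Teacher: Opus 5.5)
Your proof is correct and follows the paper's argument essentially step for step. The paper also combines the min-max principle on $V_h^0$, Lemma~\ref{lem: equivalence of a}, and conformity $V_h^0\subset H_0^1(\Omega)$ to get $\Lambda_{2,h}\ge(1-\epsilon_h)\mu_2$, then concludes with $\lambda_h^*\to\lambda^*$; the only implicit detail is that moving the factor $1-\epsilon_h$ through the min-max needs $\epsilon_h<1$, which holds for small $h$.
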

\begin{proof}
    For $v \in H_0^1(\Omega) \setminus \{0\}$, we define the Rayleigh quotients
    \begin{align*}
        R_{u^*}(v) := \frac{a_{u^*}(v,v)}{\|v\|_{L^2(\Omega)}^2}, \qquad R_{u_h^*}(v) := \frac{a_{u_h^*}(v,v)}{\|v\|_{L^2(\Omega)}^2}.
    \end{align*}
    For a subspace $X \subset H_0^1(\Omega)$, let $\mathcal{S}_2(X)$ denote the set of all two-dimensional subspaces of $X$. Let $\mu_2$ denote the second smallest eigenvalue of \eqref{equ: LEVP}. By Proposition~\ref{prop: simple eigenvalue}, we have $\mu_2 > \mu_1 = \lambda^*$, and the min-max principle yields
    \begin{align*}
        \mu_2 = \inf_{Y \in \mathcal{S}_2(H_0^1(\Omega))} \max_{0 \neq v \in Y} R_{u^*}(v).
    \end{align*}
    Fix $S_h \in \mathcal{S}_2(V_h^0)$. Since $S_h$ is finite-dimensional, there exists $\hat{v}_h \in S_h \setminus \{0\}$ such that
    \begin{align*}
        R_{u^*}(\hat{v}_h) = \max_{0 \neq v_h \in S_h} R_{u^*}(v_h).
    \end{align*}
    By Lemma~\ref{lem: equivalence of a}, we obtain
    \begin{align*}
        \max_{0 \neq v_h \in S_h} R_{u_h^*}(v_h) \ge R_{u_h^*}(\hat{v}_h) \ge (1 - \epsilon_h) R_{u^*}(\hat{v}_h) = (1 - \epsilon_h)\max_{0 \neq v_h \in S_h} R_{u^*}(v_h).
    \end{align*}
    Taking the infimum over all $S_h \in \mathcal{S}_2(V_h^0)$ and using the min-max principle for the discrete problem, we arrive at
    \begin{align*}
        \Lambda_{2,h} = \inf_{S_h \in \mathcal{S}_2(V_h^0)}\max_{0 \neq v_h \in S_h} R_{u_h^*}(v_h) \ge (1 - \epsilon_h)\underbrace{\inf_{S_h \in \mathcal{S}_2(V_h^0)}\max_{0 \neq v_h \in S_h} R_{u^*}(v_h)}_{=: \hat{\Lambda}_{2,h}}.
    \end{align*}
    Since $V_h^0 \subset H_0^1(\Omega)$, we have $\mathcal{S}_2(V_h^0) \subset \mathcal{S}_2(H_0^1(\Omega))$, and hence
\begin{align*}
    \hat{\Lambda}_{2,h} = \inf_{S_h \in \mathcal{S}_2(V_h^0)}\max_{0 \neq v_h \in S_h} R_{u^*}(v_h) \ge \inf_{Y \in \mathcal{S}_2(H_0^1(\Omega))}\max_{0 \neq v \in Y} R_{u^*}(v) = \mu_2.
\end{align*}
Consequently,
\begin{align*}
    \Lambda_{2,h} \ge (1 - \epsilon_h)\mu_2.
\end{align*}

    Set $\delta := \mu_2 - \lambda^* > 0$. Since $\lambda_h^* = a_{u_h^*}(u_h^*,u_h^*)$ and $\lambda^* = a_{u^*}(u^*,u^*)$, it follows from \eqref{equ: convergence of standard FEM} that $\lambda_h^* \to \lambda^*$ as $h \to 0$. Together with $\epsilon_h \to 0$, there exists $h_0 > 0$ such that for all $h \le h_0$,
\begin{align*}
    \epsilon_h \mu_2 < \frac{\delta}{4}, \qquad |\lambda_h^* - \lambda^*| < \frac{\delta}{4}.
\end{align*}
For such $h$, we obtain
\begin{align*}
    \Lambda_{2,h} - \lambda_h^* \ge \Big(\mu_2 - \frac{\delta}{4}\Big) - \Big(\lambda^* + \frac{\delta}{4}\Big) = \frac{\delta}{2} > 0.
\end{align*}
Since $\lambda_h^*$ is an eigenvalue of the discrete linear problem and $\lambda_h^* < \Lambda_{2,h}$, it must be the smallest one, i.e., $\Lambda_{1,h} = \lambda_h^* < \Lambda_{2,h}$. This completes the proof.
\end{proof}

\begin{coro}\label{coro: uniform gap}
    It holds that
    \begin{align*}
        \liminf_{h \to 0} (\Lambda_{2,h} - \lambda_h^*) \ge \mu_2 - \lambda^* > 0.
    \end{align*}
\end{coro}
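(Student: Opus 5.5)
The corollary follows directly from the two estimates obtained in the proof of Lemma~\ref{lem: discrete eigengap}, so I would simply revisit them and pass to the limit rather than build anything new. That proof established, for every $h$ in the admissible range, the lower bound
\begin{align*}
    \Lambda_{2,h} \ge (1-\epsilon_h)\mu_2 .
\end{align*}
It used only the uniform equivalence of Lemma~\ref{lem: equivalence of a} and the inclusion $\mathcal{S}_2(V_h^0)\subset\mathcal{S}_2(H_0^1(\Omega))$ in the min-max characterization. The same proof also showed $\lambda_h^*\to\lambda^*$, via the $H^1$-convergence \eqref{equ: convergence of standard FEM}.

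Subtracting the two, for all sufficiently small $h$,
\begin{align*}
    \Lambda_{2,h}-\lambda_h^* \ge \mu_2-\lambda^* - \epsilon_h\mu_2 - (\lambda_h^*-\lambda^*).
\end{align*}
I would then take $\liminf_{h\to 0}$ on both sides. Since $\epsilon_h\to0$ by Lemma~\ref{lem: equivalence of a}, $\mu_2$ is a fixed finite number, and $\lambda_h^*-\lambda^*\to0$, the last two terms vanish. This gives $\liminf_{h\to0}(\Lambda_{2,h}-\lambda_h^*)\ge\mu_2-\lambda^*$. Strict positivity of the right-hand side is exactly the simplicity of $\mu_1=\lambda^*$ from Proposition~\ref{prop: simple eigenvalue}.

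The only point needing care is the justification of $\lambda_h^*\to\lambda^*$. I would write
\begin{align*}
    \lambda_h^*-\lambda^* = a_{u_h^*}(u_h^*,u_h^*)-a_{u^*}(u^*,u^*)
\end{align*}
and bound it by continuity: the gradient and $V$-terms are controlled by the $H^1$-difference, and the quartic term $\beta\int_\Omega\big((u_h^*)^4-(u^*)^4\big)\,\mathrm{d}x$ by Hölder's inequality together with $H_0^1(\Omega)\hookrightarrow L^6(\Omega)$ and the uniform $H^1$-bound on $u_h^*$. There is no real obstacle beyond this; the corollary merely records the uniform-in-$h$ spectral gap for later use.
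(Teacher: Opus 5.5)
Your proposal is correct and matches the paper's argument: the paper also combines $\Lambda_{2,h}\ge(1-\epsilon_h)\mu_2$ from the proof of Lemma~\ref{lem: discrete eigengap} with $\epsilon_h\to0$ and $\lambda_h^*\to\lambda^*$, and then passes to the $\liminf$. Your continuity argument for $\lambda_h^*\to\lambda^*$ only spells out a step the paper states in one line.
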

This follows directly from the proof of Lemma~\ref{lem: discrete eigengap}, since $\Lambda_{2,h} - \lambda_h^* \ge (1 - \epsilon_h)\mu_2 - \lambda_h^*$, $\epsilon_h \to 0$, and $\lambda_h^* \to \lambda^*$.
\begin{remark}
    In \cite[Section~2]{hassan2025relation}, it is pointed out that, if the discrete ground states converge to $u^*$ in $L^\infty(\Omega)$, a min-max argument shows that $\lambda_h^*$ is the smallest eigenvalue of the discrete linear eigenvalue problem for sufficiently small $h$. Lemma~\ref{lem: discrete eigengap} shows that convergence in $H^1(\Omega)$ is sufficient, and that the smallest eigenvalue is moreover separated from the rest of the spectrum by a uniform gap.
\end{remark}

Combining Lemma~\ref{lem: discrete eigengap} with Corollary~\ref{coro: uniqueness on H} for $H = V_h^0$, we obtain the following result.

\begin{theorem}\label{thm: uniqueness of discrete ground state}
    There exists $h_0 > 0$ such that for all $0<h \le h_0$, the minimizer of $E$ on $V_h^0 \cap \M$ is unique up to sign, i.e., $u_h^*$ and $-u_h^*$ are the only two minimizers. 
\end{theorem}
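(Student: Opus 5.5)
The proof applies Corollary~\ref{coro: uniqueness on H} with $H = V_h^0$, so the only task is to check the hypotheses of Lemma~\ref{lem: technical lemma 1} in this setting.

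First, I would observe that $V_h^0$ is a nontrivial finite-dimensional subspace of $H_0^1(\Omega)$, hence closed. By the existence argument at the start of this subsection, a minimizer exists; fix one, namely $u_h^*$, normalized so that $(u_h^*,u^*)_{L^2(\Omega)} \ge 0$. Its Lagrange multiplier $\lambda_H$ is exactly $\lambda_h^* = a_{u_h^*}(u_h^*,u_h^*)$ by \eqref{equ: discrete GPEVP}. The linear eigenvalue problem \eqref{LEVP on H} with $H = V_h^0$ and $u_H = u_h^*$ is precisely the discrete linear problem whose eigenvalues are $\Lambda_{1,h} \le \Lambda_{2,h} \le \cdots$.

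Second, I would invoke Lemma~\ref{lem: discrete eigengap}. It gives an $h_0>0$ such that, for $0<h\le h_0$, $\lambda_h^* = \Lambda_{1,h} < \Lambda_{2,h}$. Thus $\lambda_H$ is the smallest eigenvalue of \eqref{LEVP on H}, it is simple, and $\mu_2 = \Lambda_{2,h} > \lambda_H$. The positivity $\lambda_H>0$, which the proof of Lemma~\ref{lem: technical lemma 1} uses, holds because $a_{u_h^*}$ is an inner product and $u_h^* \neq 0$.

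Third, Corollary~\ref{coro: uniqueness on H} then shows that every minimizer $u$ of $E$ on $V_h^0\cap\M$ equals $\pm u_h^*$. For completeness, this is the case $E(u)=E(u_H)$ in \eqref{equ: technical lemma 1}, which forces $\|u - s u_h^*\|_{a_{u_h^*}} = 0$ for some sign $s$. Conversely, $-u_h^*$ is a minimizer because $E$ is even. Hence $u_h^*$ and $-u_h^*$ are exactly the two minimizers.

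The main subtlety is that Lemma~\ref{lem: discrete eigengap} was established for the specific sign-normalized minimizer $u_h^*$. It relies on the convergence \eqref{equ: convergence of standard FEM}, which holds for any choice of discrete minimizers with the sign convention. One must make sure $h_0$ does not depend on which minimizer was selected. I would handle this by arguing for the fixed selected $u_h^*$: once uniqueness holds for it, there is no other choice to consider. Alternatively, note that \eqref{equ: convergence of standard FEM} holds uniformly over all sign-normalized minimizers, since otherwise one could extract a sequence of minimizers staying away from $u^*$, contradicting the cited convergence results. Either way, the set of minimizers is $\{u_h^*,-u_h^*\}$ for $h \le h_0$.
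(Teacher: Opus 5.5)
Your proposal is correct and follows the paper's argument: the paper obtains the theorem directly by combining Lemma~\ref{lem: discrete eigengap} with Corollary~\ref{coro: uniqueness on H} for $H = V_h^0$. Your additional checks, namely $\lambda_H>0$ and the observation that $h_0$ may be tied to one fixed selection of minimizers without affecting the conclusion, are valid and simply make explicit what the paper leaves implicit.
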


\subsection{\texorpdfstring{\boldmath $L^p$}{Lp} and \texorpdfstring{\boldmath $W^{1,p}$}{W1p} convergence}

We first recall the following a priori error estimates in the $L^2$- and $H^1$-norms; see, e.g., \cite{cances2010numerical,chen2011finite,zhou2004analysis}.
\begin{theorem}\label{thm: L^2 and H^1 estimates for standard FEM}
    There exists $h_0 > 0$ such that for all $0 < h \le h_0$,
    \begin{align*}
        |\lambda_h^* - \lambda^*| + \|u_h^* - u^*\|_{L^2(\Omega)} + h \|\nabla (u_h^* - u^*)\|_{L^2(\Omega)} \le Ch^2.
    \end{align*}
\end{theorem}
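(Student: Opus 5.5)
Although this estimate is classical, I would reprove it with the perturbation tools already set up in this section. The plan has three steps: an $H^1$ bound, an $L^2$ bound by duality, and an eigenvalue bound. Throughout, $u_h^*\to u^*$ in $H^1$ by \eqref{equ: convergence of standard FEM}.

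\emph{Step 1 ($H^1$ estimate).} Take $H=V_h^0$ in Lemma~\ref{lem: technical lemma 1}. By Lemma~\ref{lem: discrete eigengap} and Corollary~\ref{coro: uniform gap}, $\lambda_h^*$ is simple and the gap $\Lambda_{2,h}-\lambda_h^*$ is bounded below uniformly. The constant $2(\Lambda_{2,h}+\lambda_h^*)/(\Lambda_{2,h}-\lambda_h^*)$ is therefore bounded independently of $h$. Apply the lemma with $u:=R_hu^*/\|R_hu^*\|_{L^2(\Omega)}\in V_h^0\cap\M$. The energy $E$ is smooth on $H_0^1$, and $u^*$ is a critical point of $E$ on $\M$. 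Hence $E(u)-E(u^*)\le C\|u-u^*\|_{H^1}^2\le Ch^2$, using the Ritz estimate and $\big|\|R_hu^*\|_{L^2}-1\big|\le Ch^2$. Since $E(u_h^*)\ge E(u^*)$, this gives $E(u)-E(u_h^*)\le Ch^2$. So $\min_s\|u-su_h^*\|_{a_{u_h^*}}\le Ch$. The sign $s=1$ is the relevant one for small $h$, because $(u_h^*,u^*)\ge0$ and $u_h^*\to u^*$. Lemma~\ref{lem: equivalence of a} then yields $\|\nabla(u_h^*-u^*)\|_{L^2}\le Ch$.

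\emph{Step 2 ($L^2$ estimate; the main obstacle).} I would use an Aubin--Nitsche duality argument for the linearized operator at $u^*$,
\[
L' := -\Delta + V + 3\beta |u^*|^2 - \lambda^*,
\]
restricted to $\{u^*\}^{\perp}$. This operator is coercive there, since $E''(u^*)-\lambda^*$ is positive on the tangent space (from the gap $\mu_2>\lambda^*$ and $\beta\ge0$). Write $e:=u_h^*-u^*$. Subtracting \eqref{equ: discrete GPEVP} from the continuous equation gives the error equation
\[
a_{u^*}(e,v_h)+\beta\big((u_h^{*2}-u^{*2})u_h^*,v_h\big)=(\lambda_h^*u_h^*-\lambda^*u^*,v_h)\qquad \forall\, v_h\in V_h^0 .
\]
Linearize the cubic term as $3\beta u^{*2}e+O(e^2)$. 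The remainder satisfies $\|e^2\|$-type bounds $\le C\|e\|_{H^1}^2\le Ch^2$ via the embedding $H^1\hookrightarrow L^6$. The eigenvalue term splits as $(\lambda_h^*-\lambda^*)u_h^*+\lambda^*e$.

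For the dual problem, solve $L'\psi=e-(e,u^*)u^*$ with $\psi\perp u^*$. Convexity gives $\|\psi\|_{H^2}\le C\|e\|_{L^2}$. Galerkin orthogonality combined with the Ritz approximation of $\psi$ then bounds the tangential part of $e$ by $Ch\|\nabla e\|+Ch^2+C|\lambda_h^*-\lambda^*|$. The normal component is handled by the identity $(e,u^*)=-\tfrac12\|e\|_{L^2}^2$, which follows from normalization. The eigenvalue error is controlled by Step~3, and the resulting circularity is absorbed for small $h$. The delicate points are handling the projection onto $\{u^*\}^\perp$ and closing this loop between the $L^2$ error and the eigenvalue error.

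\emph{Step 3 (eigenvalue).} Use the identity
\[
\lambda_h^*-\lambda^* = a_{u^*}(e,e)-\lambda^*\|e\|_{L^2}^2+\beta\int\big(u_h^{*4}-u^{*2}u_h^{*2}\big)\,\mathrm{d}x
\]
after expanding around $u^*$, or more simply $\lambda=2E+\frac{\beta}{2}\|u\|_{L^4}^4$. The energy difference is $O(h^2)$ from Step~1. The $L^4$ difference is bounded by $C\|e\|_{L^2}\le Ch^2$ once Step~2 holds, or by a careful splitting that keeps only first-order terms in $e$ paired against smooth $u^{*3}$. Combining Steps 1--3 gives the stated estimate.
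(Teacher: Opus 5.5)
The paper does not prove this theorem; it quotes it from \cite{cances2010numerical,chen2011finite,zhou2004analysis}. Your Step~1 is correct and is a self-contained alternative that uses only the paper's own tools. With $H=V_h^0$ in Lemma~\ref{lem: technical lemma 1}, the constant equals $2+4\lambda_h^*/(\Lambda_{2,h}-\lambda_h^*)$, which Corollary~\ref{coro: uniform gap} bounds uniformly. Lemma~\ref{lem: energy minus on H} with $H=H_0^1(\Omega)$ gives $E(u)-E(u^*)\le C\|u-u^*\|_{H^1(\Omega)}^2$. Together these yield $\|\nabla(u_h^*-u^*)\|_{L^2(\Omega)}\le Ch$, and hence also $|\lambda_h^*-\lambda^*|\le Ch$. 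Step~3 is fine once the $L^2$ bound is available.

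The gap is in how you close Step~2. You bound the tangential part of $e$ by $Ch\|\nabla e\|+Ch^2+C|\lambda_h^*-\lambda^*|$ and then insert Step~3. But Step~3 only gives $|\lambda_h^*-\lambda^*|\le Ch^2+C\|e\|_{L^2(\Omega)}$ with an $O(1)$ constant, because $\beta\int_\Omega (u_h^*)^2(u_h^*+u^*)e\,\mathrm{d}x$ is genuinely first order in $e$. The loop therefore reads $\|e\|_{L^2(\Omega)}\le Ch^2+C\|e\|_{L^2(\Omega)}+\frac12\|e\|_{L^2(\Omega)}^2$. Nothing can be absorbed, since the coefficient of $\|e\|_{L^2(\Omega)}$ does not become small as $h\to0$.

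The missing observation is that the eigenvalue error enters with a small prefactor, so no loop is needed. Your error equation is $\langle L'e,v_h\rangle=(\lambda_h^*-\lambda^*)(u_h^*,v_h)_{L^2(\Omega)}-\beta(3u^*e^2+e^3,v_h)_{L^2(\Omega)}$. Test it with $v_h=R_h\psi$ and use $\psi\perp u^*$:
\[
(u_h^*,R_h\psi)_{L^2(\Omega)}=(u_h^*,R_h\psi-\psi)_{L^2(\Omega)}+(e,\psi)_{L^2(\Omega)}=O\big((h^2+\|e\|_{L^2(\Omega)})\|e\|_{L^2(\Omega)}\big).
\]
Hence the qualitative fact $\lambda_h^*\to\lambda^*$, or $|\lambda_h^*-\lambda^*|\le Ch$ from Step~1, already suffices.

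There is a second issue: your dual problem is not well posed as stated. For $\beta>0$ one has $(L'\psi,u^*)_{L^2(\Omega)}=2\beta((u^*)^3,\psi)_{L^2(\Omega)}$, which is nonzero in general, so $L'$ does not map $\{u^*\}^\perp$ into itself. You must solve the projected problem instead. Set $Pe:=e-(e,u^*)_{L^2(\Omega)}u^*$ and find $\psi\perp u^*$ with
\[
L'\psi=Pe+\gamma u^*,\qquad \gamma=2\beta((u^*)^3,\psi)_{L^2(\Omega)}.
\]
The extra term $\gamma(u^*,e)_{L^2(\Omega)}=-\frac{\gamma}{2}\|e\|_{L^2(\Omega)}^2$ is $O(\|e\|_{L^2(\Omega)}^3)$ and is harmless. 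With these two fixes you obtain
\[
\|Pe\|_{L^2(\Omega)}^2\le C\big(h^2+|\lambda_h^*-\lambda^*|(h^2+\|e\|_{L^2(\Omega)})+\|e\|_{L^2(\Omega)}^2\big)\|e\|_{L^2(\Omega)}.
\]
Combine this with $\|e\|_{L^2(\Omega)}^2=\|Pe\|_{L^2(\Omega)}^2+\frac14\|e\|_{L^2(\Omega)}^4$ and $\|e\|_{L^2(\Omega)}\to0$. This gives $\|e\|_{L^2(\Omega)}\le Ch^2$, after which your Step~3 yields the eigenvalue estimate.
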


As a consequence, we obtain the following uniform bounds.
\begin{lemma}\label{lem: Linfty bound for standard FEM}
    There exists $h_0 > 0$ such that for all $0 < h \le h_0$,
    \begin{align*}
        \|u_h^*\|_{L^{\infty}(\Omega)} + |\lambda_h^*| \le C.
    \end{align*}
\end{lemma}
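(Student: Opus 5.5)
The bound on $|\lambda_h^*|$ is immediate from Theorem~\ref{thm: L^2 and H^1 estimates for standard FEM}: for $h\le h_0$ we have $|\lambda_h^*|\le \lambda^*+Ch^2\le C$. The work is in the $L^\infty$ bound for $u_h^*$. I would split
\begin{align*}
    u_h^* = (u_h^* - R_h u^*) + R_h u^*
\end{align*}
and bound the two pieces separately.

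The Ritz part is straightforward. Since $u^*\in H^2(\Omega)\cap H_0^1(\Omega)\hookrightarrow C(\overline\Omega)$, the second estimate of Proposition~\ref{prop: stability of the Ritz projection} only gives a $|\ln h|$-dependent bound. I would avoid that by writing
\begin{align*}
    R_hu^* = I_hu^* - R_h(I_hu^* - u^*),
\end{align*}
using the identity $R_h I_h u^* = I_h u^*$. Then $\|I_hu^*\|_{L^\infty}\le \|u^*\|_{L^\infty}$. For the remaining term, the inverse inequality on the quasi-uniform mesh gives $\|w_h\|_{L^\infty}\le Ch^{-d/2}\|w_h\|_{L^2}$. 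Applying it to $R_h(I_hu^*-u^*) = (R_hu^*-u^*) + (u^*-I_hu^*)$ yields a bound $Ch^{-d/2}h^2\|u^*\|_{H^2}\le C$ for $d\le3$. Thus $\|R_hu^*\|_{L^\infty}\le C$.

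The discrete error part uses the same device. The function $e_h:=u_h^*-R_hu^*\in V_h^0$ satisfies
\begin{align*}
    \|e_h\|_{L^2}\le \|u_h^*-u^*\|_{L^2}+\|u^*-R_hu^*\|_{L^2}\le Ch^2
\end{align*}
by Theorem~\ref{thm: L^2 and H^1 estimates for standard FEM} and the $L^2$ Ritz estimate. The inverse inequality then gives $\|e_h\|_{L^\infty}\le Ch^{2-d/2}\le C$ for $d\le 3$, and in fact $e_h\to0$ uniformly.

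Since $2-d/2>0$ for $d\le3$, no genuine obstacle arises. The only point needing care is justifying the inverse estimate, which requires quasi-uniformity, and that is assumed. Had it failed, I would instead have used Lemma~\ref{lem: general framework}: apply it to the equation for $u_h^*$ with $g=(\lambda_h^*-V-\beta|u_h^*|^2)u_h^*\in L^2$, uniformly bounded through $\|u_h^*\|_{L^6}\le C$. This gives $\|\nabla u_h^*\|_{L^6}\le C$, and then $W^{1,6}\hookrightarrow L^\infty$ yields the claim directly.
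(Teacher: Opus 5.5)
Your proof is correct and is essentially the paper's argument: the inverse inequality is applied to a discrete function that is $O(h^2)$-close to $u^*$ in $L^2$, which gives $Ch^{2-d/2}\le C$ for $d\le 3$, combined with $\|I_hu^*\|_{L^\infty}\le\|u^*\|_{L^\infty}$. The paper compares $u_h^*$ directly with $I_hu^*$, so your intermediate split through $R_hu^*$ can be skipped, since you reduce it back to $I_hu^*$ anyway (also note that $R_h(I_hu^*-u^*)=I_hu^*-R_hu^*$, the opposite sign of what you wrote, which is harmless for the norm bound).
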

\begin{proof}
    The bound for $\lambda_h^*$ follows directly from Theorem~\ref{thm: L^2 and H^1 estimates for standard FEM}, so it remains to bound $u_h^*$. By the inverse inequality~\cite{brenner2008mathematical} and the interpolation estimate, we obtain
    \begin{align*}
        \|u_h^*\|_{L^{\infty}(\Omega)} & \le \|u_h^* - I_h u^*\|_{L^{\infty}(\Omega)} + \|I_h u^*\|_{L^{\infty}(\Omega)} \\
        & \le Ch^{-\frac{d}{2}}\big(\|u_h^* - u^*\|_{L^2(\Omega)} + \|u^* - I_hu^*\|_{L^2(\Omega)}\big) + \|u^*\|_{L^{\infty}(\Omega)} \\
        & \le C h^{2 - \frac{d}{2}} + C \le C,
    \end{align*}
    where we used $d \le 3$ in the last step.
\end{proof}

Since $\Omega$ is bounded, H\"older's inequality and Theorem~\ref{thm: L^2 and H^1 estimates for standard FEM} imply that for all $p \in [1,2]$,
\begin{align}\label{equ: Lp estimate p le 2}
    \|u_h^* - u^*\|_{L^p(\Omega)} + h \|\nabla (u_h^* - u^*)\|_{L^p(\Omega)} \le Ch^2.
\end{align}
It therefore remains to consider the case $2 < p \le \infty$.

We prove the convergence in $L^p$ and $W^{1,p}$ by a perturbation argument; see, e.g., \cite{dai2008convergence,chen2011finite,he2011convergence}. The key observation is that both $u^*$ and $u_h^*$ can be regarded as solutions of Poisson problems. More precisely, by \eqref{equ: GPEVP}, the ground state $u^* \in H_0^1(\Omega)$ satisfies
\begin{align}\label{equ: perturbation equation}
    - \Delta u^* = f := \lambda^* u^* - V u^* - \beta (u^*)^3,
\end{align}
and, by \eqref{equ: discrete GPEVP}, the discrete ground state $u_h^*$ satisfies
\begin{align*}
    (\nabla u_h^*, \nabla v_h)_{L^2(\Omega)} = (f_h, v_h)_{L^2(\Omega)} \qquad \forall\, v_h \in V_h^0,
\end{align*}
where $f_h := \lambda_h^* u_h^* - Vu_h^* - \beta (u_h^*)^3$. We first estimate the difference $f - f_h$.
\begin{lemma}\label{lem: f-f_h}
    There exists $h_0 > 0$ such that for all $0 < h \le h_0$,
    \begin{align*}
        \|f - f_h\|_{L^2(\Omega)} \le C h^2.
    \end{align*}
\end{lemma}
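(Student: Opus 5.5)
We estimate the three parts of $f - f_h$ separately. Write
\begin{align*}
    f - f_h = (\lambda^* u^* - \lambda_h^* u_h^*) - V(u^* - u_h^*) - \beta\big((u^*)^3 - (u_h^*)^3\big).
\end{align*}
For the first term we split $\lambda^* u^* - \lambda_h^* u_h^* = (\lambda^* - \lambda_h^*)u^* + \lambda_h^*(u^* - u_h^*)$. By Theorem~\ref{thm: L^2 and H^1 estimates for standard FEM} and Lemma~\ref{lem: Linfty bound for standard FEM}, its $L^2$-norm is bounded by $|\lambda^*-\lambda_h^*| + C\|u^*-u_h^*\|_{L^2(\Omega)} \le Ch^2$, using $\|u^*\|_{L^2(\Omega)}=1$. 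The potential term is bounded by $\|V\|_{L^\infty(\Omega)}\|u^*-u_h^*\|_{L^2(\Omega)} \le Ch^2$, since $V \in L^\infty(\Omega)$.

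For the cubic term we factor
\begin{align*}
    (u^*)^3 - (u_h^*)^3 = \big((u^*)^2 + u^*u_h^* + (u_h^*)^2\big)(u^* - u_h^*).
\end{align*}
The uniform $L^\infty$ bound on $u_h^*$ from Lemma~\ref{lem: Linfty bound for standard FEM}, together with $u^* \in H^2(\Omega) \hookrightarrow L^\infty(\Omega)$, shows that the prefactor is bounded in $L^\infty(\Omega)$ uniformly in $h$. Hence this term is also $\le C\|u^*-u_h^*\|_{L^2(\Omega)} \le Ch^2$. Collecting the three estimates, with $h_0$ the minimum of the thresholds in Theorem~\ref{thm: L^2 and H^1 estimates for standard FEM} and Lemma~\ref{lem: Linfty bound for standard FEM}, gives the claim.

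The only point requiring care is the nonlinear term. We must not pay with an $H^1$ error, which would give only $O(h)$. This is why the $L^\infty$ bound of Lemma~\ref{lem: Linfty bound for standard FEM} is used, so that the full $O(h^2)$ $L^2$ error is retained.
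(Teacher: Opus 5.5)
Your proof is correct and follows essentially the same route as the paper. Both use the splitting $(\lambda^*-\lambda_h^*)u^* + \lambda_h^*(u^*-u_h^*)$, the factorization of the cubic difference, and the uniform $L^\infty$ bounds of Lemma~\ref{lem: Linfty bound for standard FEM} combined with the $L^2$ estimate of Theorem~\ref{thm: L^2 and H^1 estimates for standard FEM}; the paper simply collects $\lambda_h^* - V - \beta\big((u^*)^2 + u^*u_h^* + (u_h^*)^2\big)$ into a single $L^\infty$ multiplier of $u^*-u_h^*$ instead of treating the three pieces separately.
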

\begin{proof}
    We decompose $f - f_h$ as
    \begin{align*}
        f - f_h & = (\lambda^* - \lambda_h^*)u^* + \lambda_h^*(u^* - u_h^*) - V(u^* - u_h^*) - \beta \big((u^*)^3 - (u_h^*)^3\big) \\
        & = (\lambda^* - \lambda_h^*)u^* + \Big( \lambda_h^* - V - \beta \big((u^*)^2 + u^*u_h^* + (u_h^*)^2\big)\Big) (u^* - u_h^*).
    \end{align*}
    By Theorem~\ref{thm: L^2 and H^1 estimates for standard FEM}, Lemma~\ref{lem: Linfty bound for standard FEM}, and $\|u^*\|_{L^2(\Omega)} = 1$, we obtain
    \begin{align*}
        \|f - f_h\|_{L^2(\Omega)}
        &\le \Big\| \lambda_h^* - V - \beta \big((u^*)^2 + u^*u_h^* + (u_h^*)^2\big)\Big\|_{L^{\infty}(\Omega)} \\
        &\quad \times \|u^* - u_h^*\|_{L^2(\Omega)} + |\lambda^* - \lambda_h^*| \le Ch^2,
    \end{align*}
    which completes the proof.
\end{proof}

The error admits the decomposition
\begin{align}\label{equ: error decomposition}
    u_h^* - u^* = (u_h^* - R_hu^*) + (R_hu^* - u^*) =: \theta_h + \rho_h.
\end{align}
By the definition of $R_h$ and \eqref{equ: perturbation equation}, $\theta_h \in V_h^0$ satisfies
\begin{align*}
    (\nabla \theta_h,\nabla v_h)_{L^2(\Omega)} = (f_h - f, v_h)_{L^2(\Omega)} \qquad \forall\, v_h \in V_h^0.
\end{align*}
Lemmas~\ref{lem: general framework} and~\ref{lem: f-f_h} then yield the following superconvergence result.
\begin{lemma}\label{lem: superconvergence le 6}
    There exists $h_0 > 0$ such that for all $0 < h \le h_0$,
    \begin{align*}
        \|\nabla \theta_h\|_{L^6(\Omega)} \le Ch^2.
    \end{align*}
\end{lemma}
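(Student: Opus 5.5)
The plan is to apply Lemma~\ref{lem: general framework} directly to $z_h := \theta_h$. The data are $g := f_h - f$ and $\bm{F} := 0$, and Lemma~\ref{lem: f-f_h} then controls $g$.

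First, I verify the discrete equation for $\theta_h$. For every $v_h \in V_h^0$, the definition of the Ritz projection together with \eqref{equ: perturbation equation}, tested against $v_h \in V_h^0 \subset H_0^1(\Omega)$, gives
\begin{align*}
(\nabla R_h u^*, \nabla v_h)_{L^2(\Omega)} = (\nabla u^*, \nabla v_h)_{L^2(\Omega)} = (f, v_h)_{L^2(\Omega)}.
\end{align*}
The discrete Euler--Lagrange equation \eqref{equ: discrete GPEVP}, rewritten in Poisson form, gives $(\nabla u_h^*, \nabla v_h)_{L^2(\Omega)} = (f_h, v_h)_{L^2(\Omega)}$. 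Subtracting the two identities yields
\begin{align*}
(\nabla \theta_h, \nabla v_h)_{L^2(\Omega)} = (f_h - f, v_h)_{L^2(\Omega)} \qquad \forall\, v_h \in V_h^0.
\end{align*}

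Second, I check the hypotheses of Lemma~\ref{lem: general framework}. We need $g \in L^2(\Omega)$, which holds because $u^* \in H^2(\Omega) \subset L^\infty(\Omega)$, $u_h^* \in V_h^0 \subset L^\infty(\Omega)$, and $V \in L^\infty(\Omega)$. The choice $\bm{F} = 0$ trivially lies in $[L^6(\Omega)]^d$. The lemma then gives
\begin{align*}
\|\nabla \theta_h\|_{L^6(\Omega)} \le C \|f - f_h\|_{L^2(\Omega)}.
\end{align*}

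Finally, Lemma~\ref{lem: f-f_h} bounds the right-hand side by $Ch^2$ for $h \le h_0$, which proves the claim.

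There is no substantial obstacle, since all the analytic work is already contained in Lemma~\ref{lem: general framework}. That lemma relies on the $H^2$-regularity of the continuous Poisson problem, the embedding $H^2 \hookrightarrow W^{1,6}$ for $d \le 3$, and the $W^{1,6}$-stability of $R_h$ from Proposition~\ref{prop: stability of the Ritz projection}. The only point requiring care is that the constant in Lemma~\ref{lem: general framework} is independent of $h$, which it is by construction.
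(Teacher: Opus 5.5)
Your proof is correct and follows the paper's argument exactly. The paper derives the same identity $(\nabla\theta_h,\nabla v_h)_{L^2(\Omega)} = (f_h - f, v_h)_{L^2(\Omega)}$ from the definition of $R_h$ and \eqref{equ: perturbation equation}, and then concludes by Lemma~\ref{lem: general framework} with $g = f_h - f$, $\bm{F} = 0$, together with Lemma~\ref{lem: f-f_h}.
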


\begin{remark}
    For the linear finite element method, it is shown in \cite[Theorem~2]{hassan2025relation} that
    \begin{align*}
        \|\Pi_h u^* - u_h^*\|_{H^1(\Omega)} \le Ch^2,
    \end{align*}
    where $\Pi_h$ denotes the $H^1(\Omega)$-orthogonal projection onto $V_h^0$. Lemma~\ref{lem: superconvergence le 6} provides an analogous estimate for the Ritz projection in the stronger $W^{1,6}(\Omega)$-norm, based on a different argument.
\end{remark}

Combining Lemma~\ref{lem: superconvergence le 6} with the Sobolev embedding $W_0^{1,6}(\Omega) \hookrightarrow L^{\infty}(\Omega)$ for $d \le 3$ and the inverse inequality, one obtains the following estimates.
\begin{lemma}\label{lem: Linfty estimates of theta_h}
    There exists $h_0 > 0$ such that for all $0 < h \le h_0$,
    \begin{align*}
        \|\theta_h\|_{L^{\infty}(\Omega)} \le Ch^2, \qquad \|\nabla \theta_h\|_{L^{\infty}(\Omega)} \le Ch^{2 - \frac{d}{6}}.
    \end{align*}
\end{lemma}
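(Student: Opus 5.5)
The plan is to derive both bounds directly from Lemma~\ref{lem: superconvergence le 6}, $\|\nabla\theta_h\|_{L^6(\Omega)}\le Ch^2$. Two facts are used: $\theta_h = u_h^* - R_h u^*$ belongs to $V_h^0\subset W_0^{1,6}(\Omega)$ and vanishes on $\partial\Omega$, and the mesh is quasi-uniform.

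For the $L^\infty$-bound I would use the Sobolev embedding $W_0^{1,6}(\Omega)\hookrightarrow L^\infty(\Omega)$, valid for $d\le 3$ because $6>d$. Together with the Poincaré inequality on $W_0^{1,6}(\Omega)$, it gives
\begin{align*}
\|\theta_h\|_{L^\infty(\Omega)} \le C\|\theta_h\|_{W^{1,6}(\Omega)} \le C\|\nabla\theta_h\|_{L^6(\Omega)} \le Ch^2.
\end{align*}
For $d=1$ one can instead argue elementarily: $|\theta_h(x)|\le\int_a^x|\theta_h'|\le C\|\theta_h'\|_{L^6}$.

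For the gradient bound I would apply the local inverse inequality elementwise to $\nabla\theta_h|_K$, which is constant on each $K\in\mathcal{T}_h$. Since $|K|\ge ch^d$ by shape regularity and quasi-uniformity,
\begin{align*}
\|\nabla\theta_h\|_{L^\infty(K)} = |K|^{-1/6}\|\nabla\theta_h\|_{L^6(K)} \le Ch^{-d/6}\|\nabla\theta_h\|_{L^6(\Omega)}.
\end{align*}
Taking the maximum over $K$ and invoking Lemma~\ref{lem: superconvergence le 6} yields $\|\nabla\theta_h\|_{L^\infty(\Omega)}\le Ch^{2-d/6}$.

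I do not expect any step to be a real obstacle. The only points needing care are that the embedding constant is independent of $h$, which holds because it is a property of the continuous space applied to a conforming function, and that the inverse inequality uses quasi-uniformity, which is assumed. The restriction $h\le h_0$ is inherited from Lemma~\ref{lem: superconvergence le 6}.
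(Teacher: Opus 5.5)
Your proposal is correct and follows the paper's route. The paper also applies Lemma~\ref{lem: superconvergence le 6} together with the Sobolev embedding $W_0^{1,6}(\Omega)\hookrightarrow L^\infty(\Omega)$ for $d\le 3$ for the first bound, and the inverse inequality on the quasi-uniform mesh for the second.
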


To establish convergence in $L^p(\Omega)$ and $W^{1,p}(\Omega)$, particularly for $p=\infty$, additional regularity of $u^*$ is required. Since $V \in L^\infty(\Omega)$ and $u^* \in H^2(\Omega) \hookrightarrow L^\infty(\Omega)$ for $d \le 3$, the right-hand side $f$ of \eqref{equ: perturbation equation} belongs to $L^\infty(\Omega)$. Elliptic regularity \cite{Dauge1988,grisvard2011elliptic} then yields $u^* \in W^{2,p}(\Omega) \cap H_0^1(\Omega)$ for every $2 \le p < \infty$. Combining Proposition~\ref{prop: p convergence of Ritz} with Lemma~\ref{lem: Linfty estimates of theta_h}, we arrive at the following result.
\begin{theorem}\label{thm: p convergence}
    For every $1 \le p < \infty$, there exists $h_0 > 0$ such that for all $0 < h \le h_0$,
    \begin{align*}
        \|u^* - u_h^*\|_{L^p(\Omega)} + h \|\nabla (u^* - u_h^*)\|_{L^p(\Omega)} \le C_ph^2.
    \end{align*}
\end{theorem}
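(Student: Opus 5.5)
The plan is to use the error decomposition \eqref{equ: error decomposition}, $u_h^* - u^* = \theta_h + \rho_h$, and to bound the two parts separately in $L^p(\Omega)$ and in the $W^{1,p}$-seminorm. The case $1 \le p \le 2$ is already covered by \eqref{equ: Lp estimate p le 2}, so it suffices to fix $2 < p < \infty$. The triangle inequality gives
\begin{align*}
    \|u^* - u_h^*\|_{L^p(\Omega)} + h\|\nabla(u^* - u_h^*)\|_{L^p(\Omega)} \le \big(\|\rho_h\|_{L^p(\Omega)} + h\|\nabla \rho_h\|_{L^p(\Omega)}\big) + \big(\|\theta_h\|_{L^p(\Omega)} + h\|\nabla \theta_h\|_{L^p(\Omega)}\big).
\end{align*}

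For the Ritz part $\rho_h = R_h u^* - u^*$, I will apply Proposition~\ref{prop: p convergence of Ritz} with $q = \max\{p,2\} = p$. This requires $u^* \in W^{2,p}(\Omega) \cap H_0^1(\Omega)$, which holds by the regularity discussion preceding the theorem. Indeed, the right-hand side $f = \lambda^* u^* - Vu^* - \beta(u^*)^3$ lies in $L^\infty(\Omega) \subset L^p(\Omega)$, because $V \in L^\infty(\Omega)$ and $u^* \in H^2(\Omega) \hookrightarrow L^\infty(\Omega)$. The $W^{2,p}$-regularity of the Dirichlet Laplacian on the convex domain $\Omega$ then gives $\|u^*\|_{W^{2,p}(\Omega)} \le C_p \|f\|_{L^p(\Omega)} \le C_p$. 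Consequently,
\begin{align*}
    \|\rho_h\|_{L^p(\Omega)} + h\|\nabla\rho_h\|_{L^p(\Omega)} \le C_p h^2 \|u^*\|_{W^{2,p}(\Omega)} \le C_p h^2 .
\end{align*}

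For the discrete part $\theta_h$, Lemma~\ref{lem: Linfty estimates of theta_h} gives $\|\theta_h\|_{L^\infty(\Omega)} \le Ch^2$. Since $\Omega$ is bounded, $\|\theta_h\|_{L^p(\Omega)} \le |\Omega|^{1/p}\|\theta_h\|_{L^\infty(\Omega)} \le Ch^2$. For the gradient we do not need the sharp $W^{1,\infty}$ bound: for $p \le 6$, H\"older's inequality and Lemma~\ref{lem: superconvergence le 6} give $\|\nabla\theta_h\|_{L^p(\Omega)} \le C\|\nabla\theta_h\|_{L^6(\Omega)} \le Ch^2$. For $p > 6$, we use $\|\nabla\theta_h\|_{L^p(\Omega)} \le C\|\nabla\theta_h\|_{L^\infty(\Omega)} \le Ch^{2-d/6}$. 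Either way $h\|\nabla\theta_h\|_{L^p(\Omega)} \le Ch^{3-d/6} \le Ch^2$, since $d \le 3$ and $h \le 1$. In fact $\theta_h$ is superconvergent in both terms.

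Summing the two contributions yields the claim, with $h_0$ taken as the minimum of the thresholds in Theorem~\ref{thm: L^2 and H^1 estimates for standard FEM} and Lemmas~\ref{lem: f-f_h}--\ref{lem: Linfty estimates of theta_h}. The only step that is not purely mechanical is the $W^{2,p}$-regularity of $u^*$ with a uniform bound. This rests on the convex-domain $W^{2,p}$ theory for every finite $p$ (via \cite{Dauge1988,grisvard2011elliptic}), and the paper invokes it just before the statement. All remaining steps are direct applications of earlier results.
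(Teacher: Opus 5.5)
Your proof is correct and follows the paper's argument. It uses the same splitting $u_h^*-u^*=\theta_h+\rho_h$, Proposition~\ref{prop: p convergence of Ritz} with $u^*\in W^{2,p}(\Omega)$ for $\rho_h$, and Lemma~\ref{lem: Linfty estimates of theta_h} for $\theta_h$. The only cosmetic difference is your separate use of the $L^6$ bound from Lemma~\ref{lem: superconvergence le 6} when $p\le 6$; the paper simply uses $\|\nabla\theta_h\|_{L^\infty(\Omega)}\le Ch^{2-d/6}\le Ch$ for every $p$.
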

\begin{proof}
    In view of \eqref{equ: Lp estimate p le 2}, it suffices to consider $2 < p < \infty$. Since $u^* \in W^{2,p}(\Omega)$, Proposition~\ref{prop: p convergence of Ritz} and Lemma~\ref{lem: Linfty estimates of theta_h} yield
    \begin{align*}
        \|u^* - u_h^*\|_{L^p(\Omega)} \le \|\theta_h\|_{L^p(\Omega)} + \|\rho_h\|_{L^p(\Omega)} \le C \|\theta_h\|_{L^{\infty}(\Omega)} + C_p h^2 \le C_ph^2.
    \end{align*}
    Similarly, using $2 - \frac{d}{6} \ge 1$ for $d \le 3$, we obtain
    \begin{align*}
        \|\nabla (u^* - u_h^*)\|_{L^p(\Omega)} \le C \|\nabla \theta_h\|_{L^{\infty}(\Omega)} + \|\nabla \rho_h\|_{L^{p}(\Omega)} \le C h^{2 - \frac{d}{6}} + C_ph \le C_ph,
    \end{align*}
    which completes the proof.
\end{proof}
\begin{remark}
    The superconvergence result in \cite{hassan2025relation} implies $\|\nabla \theta_h\|_{L^2(\Omega)} \le Ch^2$. For $d = 2$, the embedding $H_0^1(\Omega) \hookrightarrow L^p(\Omega)$ for every $1 \le p < \infty$ and the inverse inequality then give
    \begin{align*}
        \|\theta_h\|_{L^p(\Omega)} \le C_p \|\nabla \theta_h\|_{L^2(\Omega)} \le C_p h^2, \qquad \|\nabla \theta_h\|_{L^{\infty}(\Omega)} \le C h^{-1}\|\nabla \theta_h\|_{L^{2}(\Omega)} \le C h.
    \end{align*}
    Combined with Proposition~\ref{prop: p convergence of Ritz}, this recovers Theorem~\ref{thm: p convergence} for $d = 2$. For $d = 3$, however, this argument only applies to $p \le 6$, since $H_0^1(\Omega) \hookrightarrow L^p(\Omega)$ holds only for $p \le 6$ and the inverse inequality yields $\|\nabla \theta_h\|_{L^p(\Omega)} \le Ch^{\frac12 + \frac3p}$, which is suboptimal for $p > 6$.
\end{remark}

To establish convergence in $L^{\infty}(\Omega)$ and $W^{1,\infty}(\Omega)$, we need the $W^{2,\infty}(\Omega)$-regularity of $u^*$. Since $u^* \in H^2(\Omega) \hookrightarrow C(\overline{\Omega})$ and $u^*$ vanishes on $\partial\Omega$, the right-hand side $f$ of \eqref{equ: perturbation equation} vanishes on $\partial\Omega$, in particular at the corners (and, for $d=3$, along the edges) of $\Omega$. This compatibility condition, combined with additional regularity of $V$, excludes the corner singularities and leads to the following higher regularity result; see \cite{Dauge1988,grisvard2011elliptic,hell2014compatibility}.
\begin{prop}\label{prop: W2infty regularity}
    Assume, in addition, that $V \in C^{0,\alpha}(\overline{\Omega})$ for some $\alpha \in (0,1)$. Then $u^* \in W^{2,\infty}(\Omega)$.
\end{prop}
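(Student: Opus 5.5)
The plan is to treat $u^*$ as the solution of the Poisson problem \eqref{equ: Possion equation} with right-hand side $f = \lambda^* u^* - Vu^* - \beta (u^*)^3$, bootstrap its regularity, and then invoke corner regularity theory on the cube $\Omega=(a,b)^d$. First, from $u^* \in W^{2,p}(\Omega)$ for all $p<\infty$ (already established above), Sobolev embedding gives $u^* \in C^{1,\gamma}(\overline{\Omega})$ for every $\gamma<1$. In particular $u^*$ and $(u^*)^3$ are Hölder continuous. Together with $V \in C^{0,\alpha}(\overline{\Omega})$, this shows $f \in C^{0,\alpha}(\overline{\Omega})$, since products of bounded Hölder functions are Hölder.

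Second, I would verify the compatibility condition. Because $u^*$ is continuous up to the boundary and vanishes on $\partial\Omega$, we have $f = (\lambda^* - V - \beta (u^*)^2)u^* = 0$ on $\partial\Omega$, and in particular at every vertex (and, for $d=3$, along every edge). Since $u^*$ is Lipschitz and vanishes on $\partial\Omega$, we even get $|f(x)| \le C\,\mathrm{dist}(x,\partial\Omega)$ near the boundary. This gives room for the weighted Hölder conditions used in the corner theory.

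Third, away from the corners and edges, classical Schauder theory applies. This covers interior points and flat boundary portions, where flattening is trivial for a cube. It gives $u^* \in C^{2,\alpha}$ locally, hence $W^{2,\infty}$ there. The remaining and main step is the neighborhood of a corner (for $d=2$) or of an edge or vertex (for $d=3$), where the interior angle is $\pi/2$. For $d=2$ I would use an odd reflection across both sides of the right-angled corner. Extend $u^*$ and $f$ oddly in $x_1$ and then in $x_2$ about the vertex. Because $u^*=0$ and $f=0$ on the sides, the extended $f$ remains $C^{0,\alpha}$ and the extended $u^*$ solves $-\Delta \tilde u = \tilde f$ weakly in a full neighborhood. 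Interior Schauder estimates then give $\tilde u \in C^{2,\alpha}$ near the vertex.

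In $d=3$ the same odd reflections across the faces meeting at an edge or vertex apply, since the dihedral angles are $\pi/2 = \pi/2^1$. Reflecting across up to three faces yields a local problem on a full ball with Hölder right-hand side. The vanishing of $f$ on the faces is exactly what makes the odd extension continuous and Hölder across the reflection planes. This is the step I expect to be the real obstacle: one must check carefully that the extended function is a weak solution across the planes and that Hölder continuity of $f$ survives the reflection. This works because $f$ vanishes on the planes and $u^*\in W^{2,p}$ with zero trace. Alternatively, one may cite the compatibility results of \cite{Dauge1988,grisvard2011elliptic,hell2014compatibility}. Patching the local estimates by a partition of unity gives $u^* \in C^{2,\alpha}(\overline{\Omega}) \subset W^{2,\infty}(\Omega)$.
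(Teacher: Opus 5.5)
Your proposal is correct, but it takes a different route from the paper. The paper gives no self-contained argument. It only notes that $f=(\lambda^*-V-\beta(u^*)^2)u^*$ vanishes on $\partial\Omega$, in particular at the vertices and, for $d=3$, along the edges. It then cites the corner and edge singularity theory with compatibility conditions (Dauge, Grisvard, Hell et al.). There, the vanishing of $f$ removes the $r^2\ln r$-type singular terms that otherwise arise at right angles and that keep $u^*$ in $W^{2,p}$ for $p<\infty$ but out of $W^{2,\infty}$.

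You instead exploit the geometry of $\Omega=(a,b)^d$ through odd reflection across the faces meeting at a vertex or edge. The extended $\tilde u$ lies in $H^1$ and solves $-\Delta\tilde u=\tilde f$ weakly in a full ball. This uses only the zero trace of $u^*$, since the symmetrized test functions $\sum\pm\phi(\text{reflected }x)$ vanish on the reflection planes and hence belong to $H_0^1(\Omega)$. The vanishing of $f$ on the faces gives
\[
|\tilde f(x)-\tilde f(y)|\le|\tilde f(x)|+|\tilde f(y)|\le C(|x_i|^\alpha+|y_i|^\alpha)\le 2C|x-y|^\alpha
\]
whenever $x,y$ lie on opposite sides of a plane $\{x_i=0\}$. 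Hence $\tilde f\in C^{0,\alpha}$, and interior Schauder theory applies. So the step you flagged as the main obstacle is routine.

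Your route is elementary and self-contained. It shows exactly why the compatibility condition is what is needed, and it gives the stronger conclusion $u^*\in C^{2,\alpha}(\overline\Omega)$. You could streamline it further by extending $u^*$ oddly and periodically across all faces to $\R^d$. That removes the case split between interior points, faces, edges and vertices, and also the partition of unity; local Schauder estimates on finitely many balls then suffice. The first step also only needs $u^*\in H^2(\Omega)\hookrightarrow C^{0,1/2}(\overline\Omega)$ rather than $W^{2,p}$ for all $p$. The cited singularity theory is heavier, but it does not rely on the reflection symmetry of the cube, so it applies more generally to polygonal and polyhedral domains.
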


Proceeding as in the proof of Theorem~\ref{thm: p convergence}, we obtain the following result.
\begin{theorem}\label{thm: Linfty and W1infty convergence}
    Assume that $V \in C^{0,\alpha}(\overline{\Omega})$ for some $\alpha \in (0,1)$. Then there exists $h_0 > 0$ such that for all $0 < h \le h_0$,
    \begin{align*}
        \|u^* - u_h^*\|_{L^{\infty}(\Omega)} \le C h^2 |\ln h|, \qquad \|\nabla (u^* - u_h^*)\|_{L^{\infty}(\Omega)} \le Ch.
    \end{align*}
\end{theorem}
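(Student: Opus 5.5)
We will use the same error splitting \eqref{equ: error decomposition}, $u_h^* - u^* = \theta_h + \rho_h$ with $\theta_h = u_h^* - R_hu^*$ and $\rho_h = R_hu^* - u^*$, and treat the two parts separately. The discrete part $\theta_h$ needs no further regularity of $u^*$. Its bounds come from Lemma~\ref{lem: Linfty estimates of theta_h}, which rests only on Lemma~\ref{lem: f-f_h} and Lemma~\ref{lem: general framework}, and those require only $V \in L^\infty(\Omega)$. Hence for $h \le h_0$ we already have
\begin{align*}
    \|\theta_h\|_{L^\infty(\Omega)} \le Ch^2, \qquad \|\nabla\theta_h\|_{L^\infty(\Omega)} \le Ch^{2-\frac d6}.
\end{align*}

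For the projection part $\rho_h$ we invoke Proposition~\ref{prop: W2infty regularity}. Under the assumption $V \in C^{0,\alpha}(\overline{\Omega})$ it gives $u^* \in W^{2,\infty}(\Omega) \cap H_0^1(\Omega)$. The $L^\infty$ estimates of Proposition~\ref{prop: p convergence of Ritz} then yield
\begin{align*}
    \|\rho_h\|_{L^\infty(\Omega)} \le Ch^2|\ln h|\,\|u^*\|_{W^{2,\infty}(\Omega)}, \qquad \|\nabla\rho_h\|_{L^\infty(\Omega)} \le Ch\,\|u^*\|_{W^{2,\infty}(\Omega)}.
\end{align*}
Here $\|u^*\|_{W^{2,\infty}(\Omega)}$ is a fixed constant independent of $h$.

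Combining the two parts with the triangle inequality gives
\begin{align*}
    \|u^*-u_h^*\|_{L^\infty(\Omega)} \le Ch^2 + Ch^2|\ln h| \le Ch^2|\ln h|
\end{align*}
for $h$ small, since then $|\ln h| \ge 1$. For the gradient we obtain
\begin{align*}
    \|\nabla(u^*-u_h^*)\|_{L^\infty(\Omega)} \le Ch^{2-\frac d6} + Ch \le Ch,
\end{align*}
using $2-\frac d6 \ge \frac32 > 1$ for $d\le 3$ and $h \le 1$.

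The only substantive ingredient beyond earlier results is the $W^{2,\infty}$-regularity of Proposition~\ref{prop: W2infty regularity}, which is assumed here. The remaining point to check is that the constants in Lemma~\ref{lem: Linfty estimates of theta_h} are uniform in $h$, which holds because they come from Lemma~\ref{lem: superconvergence le 6} via the embedding $W_0^{1,6}(\Omega)\hookrightarrow L^\infty(\Omega)$ and the inverse inequality. The final $h_0$ is the minimum of the thresholds from these lemmas together with $h_0 < 1$.
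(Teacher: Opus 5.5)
Your proposal is correct and follows the paper's own argument. The paper also uses the splitting $u_h^*-u^*=\theta_h+\rho_h$, bounds $\theta_h$ by Lemma~\ref{lem: Linfty estimates of theta_h}, and bounds $\rho_h$ via Proposition~\ref{prop: W2infty regularity} together with the $L^\infty$-estimates of Proposition~\ref{prop: p convergence of Ritz}; your one slip is harmless, since $|\ln h|\ge 1$ needs $h\le e^{-1}$ rather than $h<1$, but any fixed $h_0<1$ already gives $|\ln h|\ge|\ln h_0|>0$, which suffices to absorb $Ch^2$ into $Ch^2|\ln h|$.
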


\subsection{Superconvergence}

Lemma~\ref{lem: superconvergence le 6} shows that $\theta_h$ superconverges in $W^{1,p}(\Omega)$ for $1 \le p \le 6$. With the help of Theorem~\ref{thm: p convergence}, this result can be extended to $1 \le p \le \infty$.

\begin{theorem}\label{thm: superconvergence of standard FEM}
    For every $1 \le p \le \infty$, there exists $h_0 > 0$ such that for all $0 < h \le h_0$,
    \begin{align*}
        \|\nabla(R_hu^* - u_h^*)\|_{L^p(\Omega)} \le C_ph^2.
    \end{align*}
\end{theorem}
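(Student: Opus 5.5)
Since $\Omega$ is bounded, Hölder's inequality shows that it suffices to treat $p=\infty$. The case $p\le 6$ is already covered by Lemma~\ref{lem: superconvergence le 6}. The plan is to rerun the perturbation argument behind Lemma~\ref{lem: superconvergence le 6}, now using the $W^{1,\infty}$ stability of the Ritz projection (Proposition~\ref{prop: stability of the Ritz projection} with $p=\infty$) instead of $p=6$. As before, $\theta_h = u_h^*-R_hu^*\in V_h^0$ satisfies
\begin{align*}
(\nabla\theta_h,\nabla v_h)_{L^2(\Omega)}=(f_h-f,v_h)_{L^2(\Omega)}\qquad\forall\, v_h\in V_h^0 .
\end{align*}
Hence $\theta_h=R_h w$, where $w\in H_0^1(\Omega)$ solves $-\Delta w=f_h-f$. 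Proposition~\ref{prop: stability of the Ritz projection} then gives $\|\nabla\theta_h\|_{L^\infty}\le C\|\nabla w\|_{L^\infty}$. It therefore suffices to bound $\|\nabla w\|_{L^\infty}\le C h^2$.

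The first step is to upgrade Lemma~\ref{lem: f-f_h} from $L^2$ to $L^q$ for some $q>d$. Using the same decomposition
\begin{align*}
f-f_h=(\lambda^*-\lambda_h^*)u^*+\Big(\lambda_h^*-V-\beta\big((u^*)^2+u^*u_h^*+(u_h^*)^2\big)\Big)(u^*-u_h^*),
\end{align*}
we bound the coefficient in $L^\infty$ by Lemma~\ref{lem: Linfty bound for standard FEM}. This is where Theorem~\ref{thm: p convergence} enters: it gives $\|u^*-u_h^*\|_{L^q}\le C_qh^2$ for every finite $q$. Together with Theorem~\ref{thm: L^2 and H^1 estimates for standard FEM} for the eigenvalue term, we obtain
\begin{align*}
\|f-f_h\|_{L^q(\Omega)}\le C_qh^2 \qquad\text{for every } q<\infty .
\end{align*}

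The second step, which I expect to be the main obstacle, is a continuous regularity estimate of the form $\|\nabla w\|_{L^\infty}\le C\|g\|_{L^q}$ for $-\Delta w=g$ with some fixed $q>d$ on the convex box $\Omega$. I would obtain it from the $W^{2,q}$-regularity on convex domains. This holds for $q$ in a range above $2$ (for the cube, corner/edge angles equal to $\pi/2$ allow large $q$; see \cite{Dauge1988,grisvard2011elliptic}). Then the Sobolev embedding $W^{2,q}\hookrightarrow W^{1,\infty}$ for $q>d$ gives the bound. If the $W^{2,q}$ range were a concern, the alternative is the Green's function bound $|\nabla_x G(x,y)|\le C|x-y|^{1-d}$ on convex polyhedra. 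Since $|x-y|^{1-d}\in L^{q'}$ for $q>d$, this yields the same estimate.

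Finally, combining the two steps with Proposition~\ref{prop: stability of the Ritz projection} gives
\begin{align*}
\|\nabla\theta_h\|_{L^\infty}\le C\|\nabla w\|_{L^\infty}\le C\|f-f_h\|_{L^q}\le Ch^2.
\end{align*}
The result for general $p$ then follows from Hölder's inequality, with $h_0$ taken as the minimum of the thresholds from the cited results. Note that no additional regularity of $V$ is needed: only $V\in L^\infty$ is used, in the coefficient bound.
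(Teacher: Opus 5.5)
Your proposal is correct and follows essentially the same route as the paper: the paper also reduces to $p=\infty$, upgrades Lemma~\ref{lem: f-f_h} to $\|f-f_h\|_{L^4(\Omega)}\le Ch^2$ using the $L^4$-estimate of Theorem~\ref{thm: p convergence}, and writes $\theta_h=R_hw$ with $-\Delta w=f_h-f$. It then concludes with $W^{2,4}$-regularity, the embedding $W^{2,4}(\Omega)\hookrightarrow W^{1,\infty}(\Omega)$, and the $W^{1,\infty}$-stability of $R_h$; the only difference is that you keep a general $q>d$ (with a Green's function fallback) where the paper simply fixes $q=4$.
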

\begin{proof}
    It suffices to consider $p = \infty$. The argument of Lemma~\ref{lem: f-f_h}, combined with the $L^4$-estimate in Theorem~\ref{thm: p convergence}, yields
    \begin{align*}
        \|f - f_h\|_{L^4(\Omega)} \le Ch^2.
    \end{align*}
    Let $w \in H_0^1(\Omega)$ be the solution of $-\Delta w = f_h - f$. By elliptic regularity, $w \in W^{2,4}(\Omega)$ with $\|w\|_{W^{2,4}(\Omega)} \le C\|f - f_h\|_{L^4(\Omega)}$. Moreover, $\theta_h = R_h w$ by the definitions of $\theta_h$ and $R_h$. Hence, Proposition~\ref{prop: stability of the Ritz projection} and the Sobolev embedding $W^{2,4}(\Omega) \hookrightarrow W^{1,\infty}(\Omega)$ for $d \le 3$ yield
    \begin{align*}
        \|\nabla \theta_h\|_{L^{\infty}(\Omega)} \le C \|\nabla w\|_{L^{\infty}(\Omega)} \le C \|w\|_{W^{2,4}(\Omega)} \le C \|f - f_h\|_{L^4(\Omega)} \le Ch^2,
    \end{align*}
    which completes the proof.
\end{proof}

\section{The mass-lumped finite element method}\label{sec: mass-lumped FEM}

The mass-lumped FEM differs from the standard FEM in that the $L^2$ inner product is replaced by a discrete one, obtained by applying the vertex quadrature rule elementwise. More precisely, for an element $K \in \mathcal{T}_h$ with vertices $\mathcal{N}_K = \{ x_{K,i} \}_{i=0}^d$, the integral of $\phi \in C(\overline{K})$ over $K$ is approximated by
\begin{align*}
    Q_K (\phi) := \frac{|K|}{d+1} \sum_{i=0}^d \phi(x_{K,i}).
\end{align*}
This quadrature rule is exact for affine functions on $K$. Accordingly, for $\phi, \psi \in C(\overline{K})$, we define
\begin{align*}
    (\phi,\psi)_{\ell^2,K} := Q_K (\phi\psi) = \frac{|K|}{d+1} \sum_{i=0}^d \phi(x_{K,i})\psi(x_{K,i}).
\end{align*}
This leads to the global discrete $L^2$ inner product and discrete $L^2$ norm
\begin{align*}
    (\phi,\psi)_{\ell^2} := \sum_{K \in \mathcal{T}_h} (\phi,\psi)_{\ell^2,K} = \sum_{z \in \mathcal{N}_h} \omega_z \phi(z)\psi(z), \qquad \|\phi\|_{\ell^2}^2 := (\phi,\phi)_{\ell^2},
\end{align*}
where $\omega_z := \frac{1}{d+1}\sum_{K \in \mathcal{T}_h,\, z \in \mathcal{N}_K} |K|$.

Throughout this section, we additionally assume that $V \in W^{2,\infty}(\Omega)$. For $v_h \in V_h^0$, the discrete energy of the mass-lumped FEM is defined as
\begin{align*}
    E_h(v_h) := \frac12 \|\nabla v_h\|_{L^2(\Omega)}^2 + \frac12 (V v_h, v_h)_{\ell^2} + \frac{\beta}{4} (v_h^2,v_h^2)_{\ell^2}.
\end{align*}
The discrete ground state of the mass-lumped FEM is then defined as
\begin{align}\label{equ: mass-lumped ground state}
    u_h^{*,\ell} \in \argmin_{v_h \in V_h^0,\, \|v_h\|_{\ell^2} = 1} E_h(v_h) \qquad \text{with} \qquad (u_h^{*,\ell}, u^*)_{L^2(\Omega)} \ge 0.
\end{align}
It satisfies the discrete Euler--Lagrange equation
\begin{align}\label{equ: mass-lumped discrete GPEVP}
    (\nabla u_h^{*,\ell}, \nabla v_h)_{L^2(\Omega)} + (V u_h^{*,\ell},v_h)_{\ell^2} + \beta \big((u_h^{*,\ell})^3,v_h\big)_{\ell^2} = \lambda_h^{*,\ell}(u_h^{*,\ell},v_h)_{\ell^2} \qquad \forall\, v_h \in V_h^0,
\end{align}
where $\lambda_h^{*,\ell} := 2E_h(u_h^{*,\ell}) + \frac{\beta}{2}\big((u_h^{*,\ell})^2,(u_h^{*,\ell})^2\big)_{\ell^2}$ is the associated Lagrange multiplier.
The following result is established in \cite{hauck2026positivity}.
\begin{prop}\label{prop: mass-lumped FEM convergence}
    Assume that the stiffness matrix associated with $V_h^0$ is an irreducible M-matrix. Then the discrete ground state $u_h^{*,\ell}$ is unique and positive in $\Omega$. Moreover, there exists $h_0 > 0$ such that for all $0 < h \le h_0$,
    \begin{align*}
        |\lambda^* - \lambda_h^{*,\ell}| + \| u_h^{*,\ell} - u^*\|_{L^2(\Omega)} + h\|\nabla (u_h^{*,\ell} - u^*)\|_{L^2(\Omega)} \le C h^2
    \end{align*}
    and
    \begin{align*}
        \|u_h^{*,\ell}\|_{L^{\infty}(\Omega)} + |\lambda_h^{*,\ell}| \le C.
    \end{align*}
\end{prop}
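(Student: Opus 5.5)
The plan follows \cite{hauck2026positivity}: first positivity and uniqueness from the M-matrix structure, then the error bounds by comparison with the standard FEM. In what follows, $M_\ell$ is the diagonal lumped mass matrix (entries $\omega_z$) and $A$ the stiffness matrix. Writing $\mathbf{u}$ for the nodal vector of $u_h^{*,\ell}$, the discrete problem \eqref{equ: mass-lumped discrete GPEVP} becomes the nodal system
\begin{align*}
    \big(A + M_\ell\,\mathrm{diag}(V(z) + \beta u_z^2)\big)\mathbf{u} = \lambda_h^{*,\ell} M_\ell \mathbf{u}.
\end{align*}
Since the lumped terms are nodal and diagonal, $E_h(|v_h|_{\rm nodal}) \le E_h(v_h)$. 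Here $|v_h|_{\rm nodal}$ is the interpolant of the nodal absolute values, and the inequality holds because $A$ has nonpositive off-diagonal entries. Hence some minimizer is nonnegative. Irreducibility of the M-matrix plus $A+D$ being an irreducible M-matrix for any nonnegative diagonal $D$ gives, via Perron--Frobenius applied to $M_\ell^{-1/2}(A+D)M_\ell^{-1/2}$, that a nonnegative eigenvector of the smallest eigenvalue is strictly positive and simple. So every nonnegative minimizer is positive, and $\lambda_h^{*,\ell}$ is the smallest, simple eigenvalue of the linearized problem.

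For uniqueness, I would not rely on a gap argument, which needs $h$ small. Instead I would use the convexity of $\rho \mapsto E_h(\sqrt{\rho})$ on nodal densities $\rho_z = u_z^2 \ge 0$ with $\sum_z \omega_z\rho_z = 1$. The lumped potential and interaction parts are linear and strictly convex in $\rho$ respectively. The key input for the Dirichlet part is that, for an M-matrix, $\rho \mapsto \sqrt{\rho}^{T} A \sqrt{\rho}$ is convex. This follows since $\sum_{z\ne z'} (-a_{zz'})\sqrt{\rho_z\rho_{z'}}$ has concave summands, together with $a_{zz}\rho_z$ being linear (after rewriting with row sums). For $\beta>0$ this gives strict convexity and a unique density. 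Positivity then gives a unique $u_h^{*,\ell}$ under the sign normalization. For $\beta = 0$, uniqueness is the simplicity of the Perron eigenvalue.

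For the error estimates, I would follow the route of Theorem \ref{thm: L^2 and H^1 estimates for standard FEM}. First, standard quadrature estimates give
\begin{align*}
    |(\phi_h,\psi_h)_{\ell^2} - (\phi_h,\psi_h)_{L^2(\Omega)}| \le Ch^2\|\nabla\phi_h\|_{L^2(\Omega)}\|\nabla\psi_h\|_{L^2(\Omega)}.
\end{align*}
Similar bounds hold for the $V$-weighted and quartic terms, using $V\in W^{2,\infty}(\Omega)$ and an $L^\infty$ bound. Next, energy comparison with $I_h u^*$ normalized in $\ell^2$ gives $E_h(u_h^{*,\ell}) \to E(u^*)$. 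Together with compactness, this yields $H^1$-convergence of $u_h^{*,\ell}$ to $u^*$; the sign is fixed by positivity. Then a uniform linearized gap follows as in Lemma \ref{lem: discrete eigengap}, with the $\ell^2$ Rayleigh quotient perturbed by $O(h^2)$ relative to the $L^2$ one. This yields an analogue of Lemma \ref{lem: technical lemma 1} with $E_h$, so $\|u_h^{*,\ell} - \tilde u_h\|_{H^1(\Omega)}^2 \lesssim E_h(\tilde u_h) - E_h(u_h^{*,\ell})$ for $\tilde u_h$ the $\ell^2$-normalized $R_h u^*$. The right-hand side is $O(h^2)$, which gives the $H^1$ rate $h$.

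The main obstacle is the $L^2$ rate $h^2$ and the eigenvalue rate, which need a duality (Aubin--Nitsche) argument for the linearized operator $E''(u^*) - \lambda^*$ restricted to $(u^*)^\perp$. Consistency errors enter this argument, and they must be shown to be $O(h^2)$ in a dual norm rather than $O(h)$. I would handle these with the quadrature estimate above applied against the discrete dual solution, which is bounded in $H^1$. Finally, the $L^\infty$ bound follows exactly as in Lemma \ref{lem: Linfty bound for standard FEM} via the inverse inequality, and $|\lambda_h^{*,\ell}| \le C$ follows from the eigenvalue estimate.
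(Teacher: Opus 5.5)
The paper does not prove this proposition; it imports it from \cite{hauck2026positivity}, so your sketch has to stand on its own.

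\textbf{What works.} The first half is correct. Because the off-diagonal entries of $A$ are nonpositive, $E_h(|v_h|_{\mathrm{nodal}})\le E_h(v_h)$. Perron--Frobenius for the pencil $(A+D,M_\ell)$ then makes every nonnegative minimizer strictly positive at the interior nodes, with $\lambda_h^{*,\ell}$ the lowest and simple eigenvalue of its linearization. Concavity of $(\rho_z,\rho_{z'})\mapsto\sqrt{\rho_z\rho_{z'}}$ gives convexity of $\rho\mapsto E_h(\sqrt{\rho})$, strict for $\beta>0$. This yields uniqueness for every $h$, not only for small $h$. To pass from a unique nonnegative minimizer to uniqueness up to sign, note that any minimizer $v_h$ and $|v_h|_{\mathrm{nodal}}$ are eigenvectors of the same linearized pencil for the same simple eigenvalue. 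The architecture of the error part is also sound: consistency via Lemma~\ref{lem: consistency}, $H^1$-convergence by compactness, a uniform gap, an $E_h$-version of Lemma~\ref{lem: technical lemma 1}, then Aubin--Nitsche.

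\textbf{The gap.} The step that does not close as written is the claim $E_h(\tilde u_h)-E_h(u_h^{*,\ell})=O(h^2)$. It needs the lower bound $E_h(u_h^{*,\ell})\ge E(u^*)-Ch^2$, hence control of the lumped quartic term at $u_h^{*,\ell}$. Elementwise one only has $|Q_K(v_h^4)-\int_K v_h^4\,\mathrm{d}x|\le Ch^2\|v_h\|_{L^\infty(K)}^2\|\nabla v_h\|_{L^2(K)}^2$, which is why you invoke ``an $L^\infty$ bound''. But the only $L^\infty$ bound on $u_h^{*,\ell}$ in your plan comes at the very end, as in Lemma~\ref{lem: Linfty bound for standard FEM}, from the $L^2$ rate $h^2$. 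That rate itself rests on this step, so the argument is circular. Without it, the discrete Sobolev inequality $\|v_h\|_{L^\infty}\le Ch^{-1/2}\|v_h\|_{H^1}$ for $d=3$ gives only $O(h)$, hence only an $H^1$ rate $h^{1/2}$. The quartic consistency terms in your duality step have the same problem.

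\textbf{The fix.} Use the sign of the quadrature error. Since $v_h^4$ is convex on each $K$ and $I_K\phi\ge\phi$ for convex $\phi$, the vertex rule overestimates: $(v_h^2,v_h^2)_{\ell^2}\ge\|v_h\|_{L^4}^4$. Combine this with three facts:
\begin{itemize}
\item $\|v_h\|_{L^2}\le\|v_h\|_{\ell^2}$ (Lemma~\ref{lem: norm equivalence});
\item $|(Vv_h,v_h)_{\ell^2}-(Vv_h,v_h)_{L^2}|\le Ch^2\|v_h\|_{H^1}^2$;
\item $w_h:=u_h^{*,\ell}/\|u_h^{*,\ell}\|_{L^2}\in\M$.
\end{itemize}
Together they give $E(u^*)\le E(w_h)\le(1+Ch^2)\big(E_h(u_h^{*,\ell})+Ch^2\big)$, with no $L^\infty$ information on $u_h^{*,\ell}$. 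Once the $H^1$ rate $h$ is established, the $h^{-d/6}$ inverse estimates give $\|u_h^{*,\ell}-I_hu^*\|_{L^\infty}+\|\nabla(u_h^{*,\ell}-I_hu^*)\|_{L^3}\le Ch^{1-d/6}$. This supplies $\|u_h^{*,\ell}\|_{L^\infty}+\|\nabla u_h^{*,\ell}\|_{L^3}\le C$ \emph{before} the Aubin--Nitsche step. These bounds are exactly what make the quartic consistency terms $O(h^2)$ against the $H^1$-bounded discrete dual function. With this reordering, the rest of your argument goes through.
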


\subsection{Some technical results}

For an element $K \in \mathcal{T}_h$, let $\varphi_{K,0},\dots,\varphi_{K,d}$ denote the barycentric coordinates of $K$, i.e., the affine functions on $K$ satisfying $\varphi_{K,i}(x_{K,j}) = \delta_{ij}$. Then
\begin{align*}
    \int_{K} \varphi_{K,i}\varphi_{K,j} \,\mathrm{d}x = \frac{|K|}{(d+1)(d+2)}(1 + \delta_{ij}), \qquad i,j = 0,\dots,d.
\end{align*}
Moreover, for $1 \le q < \infty$, we define the discrete norms on $V_h$ by
\begin{align*}
    \|v_h\|_{\ell^q} := \Big(\sum_{z \in \mathcal{N}_h} \omega_z |v_h(z)|^q\Big)^{\frac1q}, \qquad v_h \in V_h.
\end{align*}

\begin{lemma}\label{lem: norm equivalence}
    Let $1 \le q < \infty$. There exist constants $0 < c_q \le C_q$ such that for all $v_h \in V_h$,
    \begin{align*}
        c_q\|v_h\|_{L^q(\Omega)} \le \|v_h\|_{\ell^q} \le C_q\|v_h\|_{L^q(\Omega)}.
    \end{align*}
    In particular, for $q = 2$,
    \begin{align*}
        \|v_h\|_{L^2(\Omega)}^2 \le \|v_h\|_{\ell^2}^2 \le (d+2) \|v_h\|_{L^2(\Omega)}^2.
    \end{align*}
\end{lemma}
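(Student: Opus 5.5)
The plan is to localize to a single element $K \in \mathcal{T}_h$, prove the equivalence there with constants depending only on $d$ and $q$, and then sum over elements. The key identity is
\begin{align*}
    \|v_h\|_{\ell^q}^q = \sum_{K \in \mathcal{T}_h} \frac{|K|}{d+1}\sum_{i=0}^d |v_h(x_{K,i})|^q .
\end{align*}
This follows from the definition of $\omega_z$ by exchanging the order of summation. It therefore suffices to show that for every $K$ and every affine $v$ on $K$,
\begin{align*}
    c_q^q\int_K |v|^q\,\mathrm{d}x \le \frac{|K|}{d+1}\sum_{i=0}^d |v(x_{K,i})|^q \le C_q^q \int_K |v|^q\,\mathrm{d}x,
\end{align*}
with constants independent of $K$.

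For general $q$, I would pull back to the reference simplex $\hat K$ through the affine map $F_K$. Both sides scale exactly by $|K|/|\hat K|$: the integral via the Jacobian, and the quadrature via the factor $|K|$. So the inequality reduces to one on $\hat K$ for $\hat v \in \mathbb{P}_1(\hat K)$. On $\hat K$ the maps
\begin{align*}
    \hat v \mapsto \Big(\sum_i |\hat v(\hat x_i)|^q\Big)^{1/q}, \qquad \hat v \mapsto \|\hat v\|_{L^q(\hat K)}
\end{align*}
are both norms on the finite-dimensional space $\mathbb{P}_1(\hat K)$. The first is a norm because an affine function vanishing at all vertices is zero. Norm equivalence on finite-dimensional spaces then gives constants depending only on $d$ and $q$. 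No shape regularity is needed, since the scaling is exact for affine maps.

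For $q=2$ I would compute explicitly. Write $v = \sum_i a_i \varphi_{K,i}$ with $a_i = v(x_{K,i})$. The stated mass-matrix formula gives
\begin{align*}
    \int_K v^2\,\mathrm{d}x = \frac{|K|}{(d+1)(d+2)}\Big(\sum_i a_i^2 + \big(\textstyle\sum_i a_i\big)^2\Big).
\end{align*}
By Cauchy--Schwarz, $(\sum_i a_i)^2 \le (d+1)\sum_i a_i^2$, so
\begin{align*}
    \int_K v^2 \le \frac{|K|}{d+1}\sum_i a_i^2.
\end{align*}
This is the lower bound with constant $1$. Since $(\sum_i a_i)^2 \ge 0$, we also get
\begin{align*}
    \frac{|K|}{d+1}\sum_i a_i^2 \le (d+2)\int_K v^2,
\end{align*}
which is the upper bound. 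Summing over $K$ gives the stated $q=2$ inequalities.

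No step is a real obstacle. The only point needing care is verifying that the reference-element constants are uniform. This holds because both quantities transform by the same factor $|K|/|\hat K|$ under the affine map.
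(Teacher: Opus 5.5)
Your proposal is correct and follows essentially the same route as the paper: for general $q$, norm equivalence on $\mathbb{P}_1(\hat K)$ combined with an affine scaling argument; for $q=2$, the element mass-matrix formula together with Cauchy--Schwarz, which yields the constants $1$ and $d+2$. Your observation that both sides scale by the same factor $|K|/|\hat K|$, so that no shape regularity is needed, spells out what the paper simply calls a ``standard scaling argument.''
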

\begin{proof}
    The general case follows from the equivalence of norms on finite-dimensional spaces and a standard scaling argument. It remains to derive the explicit constants for $q = 2$. Fix $v_h \in V_h$ and $K \in \mathcal{T}_h$, and write $v_h|_K = \sum_{i=0}^d v_i \varphi_{K,i}$ with $v_i := v_h(x_{K,i})$. Then
\begin{align*}
    \|v_h\|_{L^2(K)}^2 = \frac{|K|}{(d+1)(d+2)}\left[ \Big( \sum_{i=0}^d v_i \Big)^2 + \sum_{i=0}^d v_i^2 \right],
    \qquad
    Q_K(v_h^2) = \frac{|K|}{d+1} \sum_{i=0}^d v_i^2.
\end{align*}
Since $0 \le \big(\sum_{i=0}^d v_i\big)^2 \le (d+1)\sum_{i=0}^d v_i^2$ by the Cauchy--Schwarz inequality, it follows that
\begin{align*}
    \|v_h\|_{L^2(K)}^2 \le Q_K(v_h^2) \le (d+2)\|v_h\|_{L^2(K)}^2.
\end{align*}
Summing over all $K \in \mathcal{T}_h$ yields the assertion for $q = 2$.
\end{proof}


For $K \in \mathcal{T}_h$, define the symmetric matrix
\begin{align*}
    D_K := \frac{1}{(d+1)(d+2)} \sum_{0 \le i < j \le d} (x_{K,i} - x_{K,j})(x_{K,i} - x_{K,j})^{T} \in \R^{d \times d}.
\end{align*}
\begin{lemma}\label{lem: consistency}
    Let $K \in \mathcal{T}_h$ and $v_h, w_h \in \mathbb{P}_1(K)$, and set $v_i := v_h(x_{K,i})$ and $w_i := w_h(x_{K,i})$ for $i = 0,\dots,d$. Then
    \begin{align}\label{equ: quadrature error nodal}
        \begin{aligned}
            Q_{K}(v_hw_h) - \int_{K}v_hw_h \,\mathrm{d}x
            &= \frac{|K|}{(d+1)(d+2)} \sum_{0 \le i < j \le d} (v_i - v_j)(w_i - w_j) \\
            &= \int_K (D_K \nabla v_h) \cdot \nabla w_h \,\mathrm{d}x.
        \end{aligned}
    \end{align}
    Moreover, $D_K$ is positive semidefinite and satisfies
    \begin{align*}
        0 \le \xi^{T} D_K \xi \le \frac{d}{2(d+2)}h_K^2 |\xi|^2 \qquad \forall\, \xi \in \R^d.
    \end{align*}
\end{lemma}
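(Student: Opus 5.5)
The plan is a direct computation on a single element in barycentric coordinates. We write $v_h|_K = \sum_i v_i \varphi_{K,i}$ and $w_h|_K = \sum_i w_i\varphi_{K,i}$. The mass matrix formula recalled above gives
\begin{align*}
    \int_K v_h w_h \,\mathrm{d}x = \frac{|K|}{(d+1)(d+2)}\Big[\Big(\sum_i v_i\Big)\Big(\sum_j w_j\Big) + \sum_i v_i w_i\Big],
\end{align*}
while $Q_K(v_hw_h) = \frac{|K|}{d+1}\sum_i v_iw_i = \frac{|K|}{(d+1)(d+2)}(d+2)\sum_i v_i w_i$. The difference is $\frac{|K|}{(d+1)(d+2)}\big[(d+1)\sum_i v_iw_i - \sum_{i,j} v_iw_j\big]$. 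The elementary polarization identity $\sum_{i<j}(v_i-v_j)(w_i-w_j) = (d+1)\sum_i v_iw_i - \sum_{i,j}v_iw_j$ then yields the first equality in \eqref{equ: quadrature error nodal}. This identity is verified by expanding: each index $i$ appears in $d$ pairs, giving $d\sum_i v_iw_i - \sum_{i\ne j} v_iw_j$.

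For the second equality, we use that $v_h$ is affine on $K$, so $v_i - v_j = \nabla v_h\cdot(x_{K,i}-x_{K,j})$ with a constant gradient, and similarly for $w_h$. Hence
\begin{align*}
    \sum_{i<j}(v_i-v_j)(w_i-w_j) = \sum_{i<j} \nabla v_h^{T}(x_{K,i}-x_{K,j})(x_{K,i}-x_{K,j})^{T}\nabla w_h.
\end{align*}
Multiplying by $\frac{|K|}{(d+1)(d+2)}$ and writing $|K|$ times a constant as an integral over $K$ gives $\int_K (D_K\nabla v_h)\cdot\nabla w_h\,\mathrm{d}x$.

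For the bounds on $D_K$, note that $\xi^{T}D_K\xi = \frac{1}{(d+1)(d+2)}\sum_{i<j}\big((x_{K,i}-x_{K,j})\cdot\xi\big)^2 \ge 0$. Cauchy--Schwarz with $|x_{K,i}-x_{K,j}|\le h_K$ bounds each term by $h_K^2|\xi|^2$. There are $\frac{d(d+1)}{2}$ pairs, which gives the constant $\frac{d(d+1)}{2(d+1)(d+2)} = \frac{d}{2(d+2)}$. No step is a real obstacle; the only care needed is the bookkeeping in the polarization identity and the pair count.
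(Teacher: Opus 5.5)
Your proposal is correct and follows the paper's proof essentially step by step. It has the same ingredients: the barycentric mass-matrix computation, the identity $\sum_{i<j}(v_i-v_j)(w_i-w_j) = (d+1)\sum_i v_iw_i - \sum_{i,j}v_iw_j$ (you check it by counting pairs, the paper by writing it as $\frac12\sum_{i,j}$), the affine substitution $v_i - v_j = \nabla v_h\cdot(x_{K,i}-x_{K,j})$, and the pair-count bound on $\xi^{T}D_K\xi$. Your one unstated step, $\nabla v_h^{T}D_K\nabla w_h = (D_K\nabla v_h)\cdot\nabla w_h$, uses the symmetry of $D_K$, which is immediate since $D_K$ is a sum of symmetric rank-one matrices.
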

\begin{proof}
    Since $v_h|_K = \sum_{i=0}^d v_i \varphi_{K,i}$ and $w_h|_K = \sum_{j=0}^d w_j \varphi_{K,j}$, the integration formula for $\int_K \varphi_{K,i}\varphi_{K,j}\,\mathrm{d}x$ gives
    \begin{align*}
        \int_K v_h w_h \,\mathrm{d}x
        &= \frac{|K|}{(d+1)(d+2)} \sum_{i,j = 0}^d v_iw_j (1 + \delta_{ij})\\
        &= \frac{|K|}{(d+1)(d+2)} \bigg[ \sum_{i,j=0}^d v_iw_j + \sum_{i = 0}^d v_iw_i \bigg].
    \end{align*}
    Together with $Q_K(v_hw_h) = \frac{|K|}{d+1}\sum_{i=0}^d v_iw_i$, this yields
    \begin{align*}
        Q_{K}(v_hw_h) - \int_{K}v_hw_h \,\mathrm{d}x = \frac{|K|}{(d+1)(d+2)} \bigg[ (d+1)\sum_{i = 0}^d v_iw_i - \sum_{i,j=0}^d v_iw_j \bigg].
    \end{align*}
    On the other hand, a direct computation shows that
    \begin{align*}
        \sum_{0 \le i < j \le d} (v_i - v_j)(w_i - w_j)
        = \frac12 \sum_{i,j=0}^d (v_i - v_j)(w_i - w_j)
        = (d+1)\sum_{i = 0}^d v_iw_i - \sum_{i,j=0}^d v_iw_j.
    \end{align*}
    Since $v_h, w_h \in \mathbb{P}_1(K)$, their gradients are constant on $K$, and
\begin{align*}
    v_i - v_j = (x_{K,i} - x_{K,j})^T\nabla v_h, \qquad w_i - w_j = (x_{K,i} - x_{K,j})^T\nabla w_h.
\end{align*}
Hence,
\begin{align*}
    \sum_{0 \le i < j \le d} (v_i - v_j)(w_i - w_j)
    &= \sum_{0 \le i < j \le d} (\nabla v_h)^T(x_{K,i} - x_{K,j})(x_{K,i} - x_{K,j})^T\nabla w_h \\
    &= (d+1)(d+2)\, (\nabla v_h)^T D_K \nabla w_h.
\end{align*}
Multiplying by $\frac{|K|}{(d+1)(d+2)}$ and using that $\nabla v_h$ and $\nabla w_h$ are constant on $K$, we obtain
\begin{align*}
    Q_{K}(v_hw_h) - \int_{K}v_hw_h \,\mathrm{d}x = |K|\,(\nabla v_h)^T D_K \nabla w_h = \int_K (D_K \nabla v_h) \cdot \nabla w_h \,\mathrm{d}x.
\end{align*}
   Finally, for any $\xi \in \R^d$, the definition of $D_K$ gives
\begin{align*}
    \xi^T D_K \xi = \frac{1}{(d+1)(d+2)} \sum_{0 \le i < j \le d} \big( \xi \cdot (x_{K,i} - x_{K,j}) \big)^2.
\end{align*}
Since $|x_{K,i} - x_{K,j}| \le h_K$ and there are $\frac{d(d+1)}{2}$ pairs $(i,j)$ with $0 \le i < j \le d$, it follows that
\begin{align*}
    0 \le \xi^T D_K \xi \le \frac{1}{(d+1)(d+2)} \cdot \frac{d(d+1)}{2}\, h_K^2 |\xi|^2 = \frac{d}{2(d+2)}h_K^2 |\xi|^2,
\end{align*}
which completes the proof.
\end{proof}


\subsection{\texorpdfstring{\boldmath $L^p$}{Lp} and \texorpdfstring{\boldmath $W^{1,p}$}{W1p} convergence}

Recall from \eqref{equ: perturbation equation} that $-\Delta u^* = f$ with $f = \lambda^* u^* - V u^* - \beta (u^*)^3$. Analogously, by \eqref{equ: mass-lumped discrete GPEVP}, the discrete ground state $u_h^{*,\ell}$ satisfies
\begin{align*}
    (\nabla u_h^{*,\ell}, \nabla v_h)_{L^2(\Omega)} = (f_h^{\ell}, v_h)_{\ell^2} \qquad \forall\, v_h \in V_h^0,
\end{align*}
where $f_h^{\ell} := \lambda_h^{*,\ell}u_h^{*,\ell} - Vu_h^{*,\ell} - \beta (u_h^{*,\ell})^3$.
Let $\delta_h := I_h f_h^{\ell} - I_h f \in V_h$.
\begin{lemma}\label{lem: delta_h}
    There exists $h_0 > 0$ such that for all $0 < h \le h_0$,
    \begin{align*}
        \|\delta_h\|_{L^2(\Omega)} \le C h^2.
    \end{align*}
\end{lemma}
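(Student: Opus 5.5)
Since $\delta_h = I_h(f_h^{\ell} - f)$ is determined by nodal values, the plan is to estimate it through the discrete norm. By Lemma~\ref{lem: norm equivalence} with $q=2$,
\begin{align*}
    \|\delta_h\|_{L^2(\Omega)}^2 \le \|\delta_h\|_{\ell^2}^2 = \sum_{z \in \mathcal{N}_h} \omega_z |f_h^{\ell}(z) - f(z)|^2 .
\end{align*}
This step needs only that $f$ and $f_h^{\ell}$ are continuous, which holds because $u^* \in H^2(\Omega) \hookrightarrow C(\overline{\Omega})$, $V \in W^{2,\infty}(\Omega) \subset C(\overline{\Omega})$, and $u_h^{*,\ell}$ is continuous.

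Next I decompose pointwise at each vertex $z$, exactly as in Lemma~\ref{lem: f-f_h}:
\begin{align*}
    f_h^{\ell}(z) - f(z) = (\lambda_h^{*,\ell} - \lambda^*)u^*(z) + \Big(\lambda_h^{*,\ell} - V(z) - \beta\big(u^*(z)^2 + u^*(z)u_h^{*,\ell}(z) + u_h^{*,\ell}(z)^2\big)\Big)\big(u_h^{*,\ell}(z) - u^*(z)\big).
\end{align*}
The bracket is bounded uniformly in $z$ and $h$. This uses $\|V\|_{L^\infty(\Omega)}$, $\|u^*\|_{L^\infty(\Omega)}$, and the uniform bounds on $\|u_h^{*,\ell}\|_{L^\infty(\Omega)}$ and $|\lambda_h^{*,\ell}|$ from Proposition~\ref{prop: mass-lumped FEM convergence}. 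Hence
\begin{align*}
    \|\delta_h\|_{\ell^2} \le C|\lambda_h^{*,\ell} - \lambda^*|\, \|I_h u^*\|_{\ell^2} + C\|u_h^{*,\ell} - I_h u^*\|_{\ell^2}.
\end{align*}
Here I used that the nodal values of $u_h^{*,\ell} - u^*$ coincide with those of $u_h^{*,\ell} - I_h u^*$.

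The main point, and the only slightly delicate step, is that $u^* - u_h^{*,\ell}$ is not a finite element function. I therefore never evaluate an $\ell^2$-norm of it directly, but only of its interpolant $u_h^{*,\ell} - I_h u^* \in V_h^0$. For this quantity, Lemma~\ref{lem: norm equivalence} and the interpolation estimate give
\begin{align*}
    \|u_h^{*,\ell} - I_h u^*\|_{\ell^2} \le \sqrt{d+2}\,\big(\|u_h^{*,\ell} - u^*\|_{L^2(\Omega)} + \|u^* - I_h u^*\|_{L^2(\Omega)}\big) \le Ch^2,
\end{align*}
where the last step uses the $L^2$-estimate of Proposition~\ref{prop: mass-lumped FEM convergence} and $u^* \in H^2(\Omega)$. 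In the same way, $\|I_h u^*\|_{\ell^2} \le C\|I_h u^*\|_{L^2(\Omega)} \le C$.

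Finally, Proposition~\ref{prop: mass-lumped FEM convergence} gives $|\lambda_h^{*,\ell} - \lambda^*| \le Ch^2$. Combining the three bounds yields $\|\delta_h\|_{L^2(\Omega)} \le Ch^2$ for $h \le h_0$.
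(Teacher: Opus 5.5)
Your proof is correct and follows essentially the same route as the paper: bound $\|\delta_h\|_{L^2(\Omega)}$ by $\|\delta_h\|_{\ell^2}$, split $f_h^{\ell}-f$ at the nodes, and reduce everything to $\|u_h^{*,\ell}-I_hu^*\|_{\ell^2}\le Ch^2$ via norm equivalence and the interpolation estimate. The only cosmetic difference is in the eigenvalue term. The paper pairs $\lambda^*-\lambda_h^{*,\ell}$ with $u_h^{*,\ell}$ and uses $\|u_h^{*,\ell}\|_{\ell^2}=1$, whereas you pair it with $u^*$ and bound $\|I_hu^*\|_{\ell^2}\le C$.
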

\begin{proof}
By Lemma~\ref{lem: norm equivalence}, we have $\|\delta_h\|_{L^2(\Omega)} \le \|\delta_h\|_{\ell^2}$. Since $\delta_h$ and $f_h^{\ell} - f$ coincide at all nodes,
\begin{align*}
    \|\delta_h\|_{\ell^2} = \|f_h^{\ell} - f\|_{\ell^2}.
\end{align*}
Here, $\|\cdot\|_{\ell^2}$ is extended to $C(\overline{\Omega})$ by the same formula, and the triangle inequality still holds. Using $\|u_h^{*,\ell}\|_{\ell^2} = 1$ and Proposition~\ref{prop: mass-lumped FEM convergence}, we obtain
\begin{align*}
    \|f_h^{\ell} - f\|_{\ell^2} & \le \|\lambda^*(u^* - u_h^{*,\ell})\|_{\ell^2} + |\lambda^* - \lambda_h^{*,\ell}|\,\|u_h^{*,\ell}\|_{\ell^2} + \|V(u^* - u_h^{*,\ell})\|_{\ell^2} \\
    & \quad + \beta \big\|\big((u^*)^2 + u^*u_h^{*,\ell} + (u_h^{*,\ell})^2\big)(u^* - u_h^{*,\ell})\big\|_{\ell^2} \\
    & \le C \|u^* - u_h^{*,\ell}\|_{\ell^2} + Ch^2 + \|V\|_{L^{\infty}(\Omega)}\|u^* - u_h^{*,\ell}\|_{\ell^2} \\
    & \quad + C \big\|(u^*)^2 + u^*u_h^{*,\ell} + (u_h^{*,\ell})^2\big\|_{L^{\infty}(\Omega)} \|u^* - u_h^{*,\ell}\|_{\ell^2} \\
    & \le C \|u^* - u_h^{*,\ell}\|_{\ell^2} + Ch^2.
\end{align*}
Since $u^*$ and $I_hu^*$ coincide at all nodes, Lemma~\ref{lem: norm equivalence}, the interpolation estimate, and Proposition~\ref{prop: mass-lumped FEM convergence} yield
\begin{align*}
    \|u^* - u_h^{*,\ell}\|_{\ell^2} = \|I_hu^* - u_h^{*,\ell}\|_{\ell^2} & \le C \|I_hu^* - u_h^{*,\ell}\|_{L^2(\Omega)} \\
    &\le C \big(\|u^* - I_hu^*\|_{L^2(\Omega)} + \|u^* - u_h^{*,\ell}\|_{L^2(\Omega)}\big) \le Ch^2.
\end{align*}
Combining the above estimates completes the proof.
\end{proof}

Let $\theta_h^{\ell} := u_h^{*,\ell} - R_hu^* \in V_h^0$. By the definition of $R_h$ and \eqref{equ: perturbation equation}, we have for all $v_h \in V_h^0$,
\begin{align*}
    (\nabla \theta_h^{\ell}, \nabla v_h)_{L^2(\Omega)} & = (f_h^{\ell},v_h)_{\ell^2} - (f,v_h)_{L^2(\Omega)} \\
    & = (\delta_h, v_h)_{\ell^2} - (f - I_hf,v_h)_{L^2(\Omega)} + \big[(I_hf, v_h)_{\ell^2} - (I_hf, v_h)_{L^2(\Omega)}\big].
\end{align*}
For the first term, by the Riesz representation theorem on $V_h^0$, there exists a unique $q_h \in V_h^0$ such that
\begin{align*}
    (q_h,v_h)_{L^2(\Omega)} = (\delta_h, v_h)_{\ell^2} \qquad \forall\, v_h \in V_h^0.
\end{align*}
Taking $v_h = q_h$ and using the Cauchy--Schwarz inequality together with Lemma~\ref{lem: norm equivalence}, we obtain
\begin{align*}
    \|q_h\|_{L^2(\Omega)}^2 \le \|\delta_h\|_{\ell^2}\|q_h\|_{\ell^2} \le (d+2)\|\delta_h\|_{L^2(\Omega)} \|q_h\|_{L^2(\Omega)}.
\end{align*}
Hence, Lemma~\ref{lem: delta_h} implies $\|q_h\|_{L^2(\Omega)} \le Ch^2$.
For the third term, applying Lemma~\ref{lem: consistency} on each element $K \in \mathcal{T}_h$ yields
\begin{align*}
    (I_hf, v_h)_{\ell^2} - (I_hf, v_h)_{L^2(\Omega)} = \sum_{K \in \mathcal{T}_h} \int_{K} (D_K\nabla I_hf)\cdot\nabla v_h \,\mathrm{d}x = (\bm{F}_h, \nabla v_h)_{L^2(\Omega)},
\end{align*}
where $\bm{F}_h$ is defined elementwise by $\bm{F}_h|_K := D_K\nabla I_hf|_K$ for $K \in \mathcal{T}_h$. Since $D_K$ is symmetric positive semidefinite, Lemma~\ref{lem: consistency} yields
\begin{align*}
    \|\bm{F}_h\|_{L^{\infty}(\Omega)} = \max_{K \in \mathcal{T}_h} \|D_K\nabla I_hf\|_{L^{\infty}(K)} \le Ch^{2} \|\nabla I_hf\|_{L^{\infty}(\Omega)}.
\end{align*}
Since $V \in W^{2,\infty}(\Omega)$ and $u^* \in W^{2,\infty}(\Omega)$ by Proposition~\ref{prop: W2infty regularity}, we have $f \in W^{2,\infty}(\Omega)$. The interpolation estimate then gives
\begin{align*}
    \|\nabla I_hf\|_{L^{\infty}(\Omega)} &\le \|\nabla f\|_{L^{\infty}(\Omega)} + \|\nabla(f - I_hf)\|_{L^{\infty}(\Omega)}\\
    &\le \|\nabla f\|_{L^{\infty}(\Omega)} + Ch\|f\|_{W^{2,\infty}(\Omega)} \le C,
\end{align*}
and hence $\|\bm{F}_h\|_{L^{\infty}(\Omega)} \le Ch^2$.
Combining the above estimates with the interpolation estimate, Lemma~\ref{lem: general framework}, and the inverse inequality, we obtain the following result.
\begin{lemma}\label{lem: superconvergence mass-lumped}
    There exists $h_0 > 0$ such that for all $0 < h \le h_0$,
    \begin{align*}
        \|\nabla \theta_h^{\ell}\|_{L^6(\Omega)} + \|\theta_h^{\ell}\|_{L^{\infty}(\Omega)} \le Ch^2, \qquad \|\nabla\theta_h^{\ell}\|_{L^{\infty}(\Omega)} \le Ch^{2 - \frac{d}{6}}.
    \end{align*}
\end{lemma}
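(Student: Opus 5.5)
The plan is to write the error equation for $\theta_h^{\ell}$ in exactly the form required by Lemma~\ref{lem: general framework}, with an $L^2$ source term and an $L^6$ divergence-form term, and then pass from the $W^{1,6}$ bound to the $L^\infty$ bounds. From the identity preceding the statement, for all $v_h \in V_h^0$,
\begin{align*}
    (\nabla \theta_h^{\ell}, \nabla v_h)_{L^2(\Omega)} = (g_h, v_h)_{L^2(\Omega)} + (\bm{F}_h, \nabla v_h)_{L^2(\Omega)}, \qquad g_h := q_h - (f - I_h f).
\end{align*}
We have already shown $\|q_h\|_{L^2(\Omega)} \le Ch^2$. Since $f \in W^{2,\infty}(\Omega) \subset H^2(\Omega)$, the interpolation estimate gives $\|f - I_hf\|_{L^2(\Omega)} \le Ch^2\|f\|_{H^2(\Omega)} \le Ch^2$. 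Hence $\|g_h\|_{L^2(\Omega)} \le Ch^2$.

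For the divergence-form term, $\bm{F}_h$ is piecewise constant, so it lies in $[L^6(\Omega)]^d$. Because $\Omega$ is bounded, $\|\bm{F}_h\|_{L^6(\Omega)} \le C\|\bm{F}_h\|_{L^\infty(\Omega)} \le Ch^2$. Lemma~\ref{lem: general framework} then gives
\begin{align*}
    \|\nabla \theta_h^{\ell}\|_{L^6(\Omega)} \le C\big(\|g_h\|_{L^2(\Omega)} + \|\bm{F}_h\|_{L^6(\Omega)}\big) \le Ch^2.
\end{align*}
One point needs checking. Lemma~\ref{lem: general framework} was stated for $\bm{F} \in [L^6(\Omega)]^d$ independent of $h$. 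Its constant, however, comes only from Lemma~\ref{lem: regularity} and Proposition~\ref{prop: stability of the Ritz projection}, and neither depends on $\bm{F}$. So an $h$-dependent $\bm{F}_h$ is harmless.

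For the $L^\infty$ bound on $\theta_h^{\ell}$, note that $\theta_h^{\ell} \in V_h^0 \subset W_0^{1,6}(\Omega)$. The Sobolev embedding $W_0^{1,6}(\Omega) \hookrightarrow L^\infty(\Omega)$ for $d \le 3$ (together with Poincaré's inequality) yields $\|\theta_h^{\ell}\|_{L^\infty(\Omega)} \le C\|\nabla \theta_h^{\ell}\|_{L^6(\Omega)} \le Ch^2$. For the gradient, $\nabla \theta_h^{\ell}$ is piecewise constant. On quasi-uniform meshes, the elementwise inverse inequality $\|\nabla\theta_h^{\ell}\|_{L^\infty(K)} \le C|K|^{-1/6}\|\nabla\theta_h^{\ell}\|_{L^6(K)} \le Ch^{-d/6}\|\nabla \theta_h^{\ell}\|_{L^6(\Omega)}$ applies, which gives $\|\nabla\theta_h^{\ell}\|_{L^\infty(\Omega)} \le Ch^{2-\frac d6}$.

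The main work, bounding $q_h$ and $\bm{F}_h$, has already been done before the statement. What remains is mostly bookkeeping: the regularity $f \in H^2(\Omega)$ needed for the interpolation term, and the independence of the constant in Lemma~\ref{lem: general framework} from the data.
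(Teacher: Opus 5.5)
Your proposal is correct and matches the paper's argument: you write the error equation with $L^2$ source $q_h - (f - I_hf)$ and divergence-form term $\bm{F}_h$, apply Lemma~\ref{lem: general framework} to get the $W^{1,6}$ bound, and use the embedding $W_0^{1,6}(\Omega)\hookrightarrow L^\infty(\Omega)$ and the inverse inequality for the $L^\infty$ bounds. The point you flag as needing checking is already covered, since Lemma~\ref{lem: general framework} states that its constant is independent of $h$, $g$, and $\bm{F}$.
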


Proceeding as in the proofs of Theorems~\ref{thm: p convergence} and~\ref{thm: Linfty and W1infty convergence}, with Lemma~\ref{lem: superconvergence mass-lumped} in place of Lemmas~\ref{lem: superconvergence le 6} and~\ref{lem: Linfty estimates of theta_h}, we obtain the following result.
\begin{theorem}\label{thm: L^p convergence of mass-lumped}
    There exists $h_0 > 0$ such that for all $0 < h \le h_0$,
    \begin{align*}
        \|u^* - u_h^{*,\ell}\|_{L^p(\Omega)} + h \|\nabla(u^* - u_h^{*,\ell})\|_{L^p(\Omega)} \le C_ph^2 \qquad \text{for } 1 \le p < \infty,
    \end{align*}
    and
    \begin{align*}
        \|u^* - u_h^{*,\ell}\|_{L^{\infty}(\Omega)} \le Ch^2|\ln h|, \qquad \|\nabla (u^* - u_h^{*,\ell})\|_{L^{\infty}(\Omega)} \le C h.
    \end{align*}
\end{theorem}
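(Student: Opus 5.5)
The plan is to split the error into the discrete part and the Ritz projection error,
\begin{align*}
    u_h^{*,\ell} - u^* = \theta_h^{\ell} + (R_hu^* - u^*) = \theta_h^{\ell} + \rho_h,
\end{align*}
and to argue exactly as in Theorems~\ref{thm: p convergence} and~\ref{thm: Linfty and W1infty convergence}. The projection error $\rho_h$ is handled by Proposition~\ref{prop: p convergence of Ritz}. The discrete part $\theta_h^{\ell}$ is handled by Lemma~\ref{lem: superconvergence mass-lumped}, which we take as given since it was established from the decomposition preceding it.

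For completeness, here is how that lemma comes about. The right-hand side of the $\theta_h^{\ell}$-equation is $(q_h,v_h)_{L^2(\Omega)} - (f - I_hf, v_h)_{L^2(\Omega)} + (\bm F_h,\nabla v_h)_{L^2(\Omega)}$. We then apply Lemma~\ref{lem: general framework} with $g = q_h - (f - I_hf)$, which satisfies $\|g\|_{L^2(\Omega)}\le Ch^2$ by Lemma~\ref{lem: delta_h} and the interpolation estimate, and with $\bm F = \bm F_h$, which satisfies $\|\bm F_h\|_{L^6(\Omega)}\le C\|\bm F_h\|_{L^\infty(\Omega)}\le Ch^2$. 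This gives $\|\nabla\theta_h^{\ell}\|_{L^6(\Omega)}\le Ch^2$. The embedding $W_0^{1,6}(\Omega)\hookrightarrow L^\infty(\Omega)$ and the inverse inequality then yield the remaining bounds.

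\emph{Case $1\le p<\infty$.} For $p\le 2$ we use H\"older's inequality together with the $L^2$/$H^1$ bounds of Proposition~\ref{prop: mass-lumped FEM convergence}. For $2<p<\infty$ we have $u^*\in W^{2,p}(\Omega)$, by the elliptic regularity noted before Theorem~\ref{thm: p convergence}. Then
\begin{align*}
    \|u^*-u_h^{*,\ell}\|_{L^p(\Omega)} \le C\|\theta_h^{\ell}\|_{L^\infty(\Omega)} + \|\rho_h\|_{L^p(\Omega)} \le C_ph^2,
\end{align*}
and
\begin{align*}
    \|\nabla(u^*-u_h^{*,\ell})\|_{L^p(\Omega)}\le C h^{2-\frac d6} + C_p h\le C_p h,
\end{align*}
where the last step uses $2-\frac d6\ge 1$.

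\emph{Case $p=\infty$.} Since $V\in W^{2,\infty}(\Omega)\subset C^{0,\alpha}(\overline\Omega)$, Proposition~\ref{prop: W2infty regularity} gives $u^*\in W^{2,\infty}(\Omega)$. The $L^\infty$-estimates of Proposition~\ref{prop: p convergence of Ritz} then give $\|\rho_h\|_{L^\infty(\Omega)}\le Ch^2|\ln h|$ and $\|\nabla\rho_h\|_{L^\infty(\Omega)}\le Ch$. Adding $\|\theta_h^{\ell}\|_{L^\infty(\Omega)}\le Ch^2$ and $\|\nabla\theta_h^{\ell}\|_{L^\infty(\Omega)}\le Ch^{2-\frac d6}\le Ch$ gives the claim.

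The only real obstacle lies upstream of this theorem, in Lemma~\ref{lem: superconvergence mass-lumped}. There, the quadrature error must be written in divergence form with an $O(h^2)$ flux, via Lemma~\ref{lem: consistency}, so that it enters Lemma~\ref{lem: general framework} through $\bm F$ and not through $g$. Bounding it only in $L^2$ would lose powers of $h$ under the inverse inequality. This step needs $f\in W^{2,\infty}(\Omega)$, which is the reason for the extra assumption $V\in W^{2,\infty}(\Omega)$. Given that lemma, the theorem itself is routine.
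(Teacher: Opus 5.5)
Your proposal is correct and is essentially the paper's own argument. The paper obtains this theorem by repeating the proofs of Theorems~\ref{thm: p convergence} and~\ref{thm: Linfty and W1infty convergence} with the splitting $u_h^{*,\ell}-u^* = \theta_h^{\ell} + \rho_h$. It bounds $\theta_h^{\ell}$ by Lemma~\ref{lem: superconvergence mass-lumped}, derived exactly as you sketch from Lemma~\ref{lem: general framework} with $g = q_h - (f - I_hf)$ and the $O(h^2)$ flux $\bm{F}_h$, bounds $\rho_h$ by Proposition~\ref{prop: p convergence of Ritz}, and uses $V \in W^{2,\infty}(\Omega)$ to get $u^* \in W^{2,\infty}(\Omega)$ for the case $p=\infty$.
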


\subsection{Superconvergence}

With the help of Theorem~\ref{thm: L^p convergence of mass-lumped}, the estimate of $\delta_h$ in Lemma~\ref{lem: delta_h} can be improved as follows.
\begin{lemma}\label{lem: delta_h Lq}
    For every $2 \le q < \infty$, there exists $h_0 > 0$ such that for all $0 < h \le h_0$,
    \begin{align*}
        \|\delta_h\|_{L^q(\Omega)} \le C_q h^2.
    \end{align*}
\end{lemma}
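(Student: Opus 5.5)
We mimic the proof of Lemma~\ref{lem: delta_h}, replacing the $\ell^2$-norm by the discrete $\ell^q$-norm and the $L^2$-estimates of Proposition~\ref{prop: mass-lumped FEM convergence} by the $L^p$-estimates of Theorem~\ref{thm: L^p convergence of mass-lumped}.

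\textbf{Step 1: pass to nodal values.} Since $\delta_h \in V_h$, Lemma~\ref{lem: norm equivalence} gives $\|\delta_h\|_{L^q(\Omega)} \le c_q^{-1}\|\delta_h\|_{\ell^q}$. Because $\delta_h$ coincides with $f_h^{\ell} - f$ at every node, we have $\|\delta_h\|_{\ell^q} = \|f_h^{\ell} - f\|_{\ell^q}$, with $\|\cdot\|_{\ell^q}$ extended to $C(\overline{\Omega})$ by the same nodal formula. This extension is still a seminorm, and it satisfies $\|\phi\psi\|_{\ell^q} \le \|\phi\|_{L^\infty(\Omega)}\|\psi\|_{\ell^q}$.

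\textbf{Step 2: algebraic splitting.} Decompose
\begin{align*}
f_h^{\ell} - f = (\lambda_h^{*,\ell} - \lambda^*)u_h^{*,\ell} + \Big(\lambda^* - V - \beta\big((u^*)^2 + u^*u_h^{*,\ell} + (u_h^{*,\ell})^2\big)\Big)(u_h^{*,\ell} - u^*).
\end{align*}
The multiplier in the second term is bounded in $L^\infty(\Omega)$ by Proposition~\ref{prop: mass-lumped FEM convergence}, $V\in L^\infty(\Omega)$, and $u^*\in L^\infty(\Omega)$. For the first term, note that $\|u_h^{*,\ell}\|_{\ell^q} \le |\Omega|^{1/q}\|u_h^{*,\ell}\|_{L^\infty(\Omega)} \le C$, since $\sum_z \omega_z = |\Omega|$. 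Together with $|\lambda_h^{*,\ell} - \lambda^*| \le Ch^2$, this gives
\begin{align*}
\|f_h^{\ell} - f\|_{\ell^q} \le Ch^2 + C\|u_h^{*,\ell} - u^*\|_{\ell^q}.
\end{align*}

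\textbf{Step 3: the remaining nodal error.} Since $u^*$ and $I_hu^*$ coincide at all nodes,
\begin{align*}
\|u_h^{*,\ell} - u^*\|_{\ell^q} = \|u_h^{*,\ell} - I_hu^*\|_{\ell^q} \le C_q\|u_h^{*,\ell} - I_hu^*\|_{L^q(\Omega)} \le C_q\big(\|u_h^{*,\ell} - u^*\|_{L^q(\Omega)} + \|u^* - I_hu^*\|_{L^q(\Omega)}\big).
\end{align*}
The first term on the right is bounded by $C_qh^2$ by Theorem~\ref{thm: L^p convergence of mass-lumped}.

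\textbf{Main obstacle: the interpolation error in $L^q$.} The previously stated interpolation estimate is only an $L^2$ one, so we need $\|u^* - I_hu^*\|_{L^q(\Omega)} \le Ch^2$ separately. We use $u^*\in W^{2,\infty}(\Omega)$ from Proposition~\ref{prop: W2infty regularity}. This is legitimate because $V \in W^{2,\infty}(\Omega) \subset C^{0,\alpha}(\overline{\Omega})$ is assumed throughout this section. The standard estimate then gives $\|u^* - I_hu^*\|_{L^\infty(\Omega)} \le Ch^2\|u^*\|_{W^{2,\infty}(\Omega)}$, and since $\Omega$ is bounded the $L^q$-bound follows. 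Alternatively, one can use $u^*\in W^{2,q}(\Omega)$ with the $W^{2,q}$ interpolation estimate, valid since $2q > d$.

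Combining Steps 1--3 yields $\|\delta_h\|_{L^q(\Omega)} \le C_qh^2$.
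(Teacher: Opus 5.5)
Your proof is correct and follows essentially the same route as the paper. Both arguments pass to the discrete $\ell^q$-norm via Lemma~\ref{lem: norm equivalence}, split $f_h^{\ell}-f$ into the eigenvalue term plus an $L^\infty$-bounded multiplier times $u_h^{*,\ell}-u^*$, replace $u^*$ by $I_hu^*$ at the nodes, and conclude with Theorem~\ref{thm: L^p convergence of mass-lumped} and an interpolation estimate. Your justification of $\|u^*-I_hu^*\|_{L^q(\Omega)}\le Ch^2$ via $u^*\in W^{2,\infty}(\Omega)$ only makes explicit what the paper calls ``the interpolation estimate.''
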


\begin{proof}
    As in the proof of Lemma~\ref{lem: delta_h}, using Lemma~\ref{lem: norm equivalence} and the fact that $u^*$ and $I_hu^*$ coincide at all nodes, we obtain
    \begin{align*}
        \|\delta_h\|_{L^q(\Omega)} \le C_q\|\delta_h\|_{\ell^q} & \le C_q|\lambda_h^{*,\ell} - \lambda^*|\,\|u_h^{*,\ell}\|_{\ell^q} + C_q \|u_h^{*,\ell} - I_hu^*\|_{\ell^q} \\
        & \le C_q h^2 \|u_h^{*,\ell}\|_{L^q(\Omega)} + C_q\|u_h^{*,\ell} - I_hu^*\|_{L^q(\Omega)} \le C_qh^2,
    \end{align*}
    where the last step follows from Proposition~\ref{prop: mass-lumped FEM convergence}, the interpolation estimate, and Theorem~\ref{thm: L^p convergence of mass-lumped}.
    \end{proof}

    To exploit Lemma~\ref{lem: delta_h Lq}, we rewrite the equation for $\theta_h^{\ell}$ as follows: for all $v_h \in V_h^0$,
    \begin{align*}
        (\nabla \theta_h^{\ell}, \nabla v_h)_{L^2(\Omega)} & = (f_h^{\ell},v_h)_{\ell^2} - (f,v_h)_{L^2(\Omega)} \\
        & = (\delta_h, v_h)_{L^2(\Omega)} + \big[(\delta_h, v_h)_{\ell^2} - (\delta_h, v_h)_{L^2(\Omega)}\big] \\
        & \quad - (f - I_hf,v_h)_{L^2(\Omega)} + \big[(I_hf, v_h)_{\ell^2} - (I_hf, v_h)_{L^2(\Omega)}\big].
    \end{align*}
    For the second term, Lemma~\ref{lem: consistency} yields
    \begin{align*}
        (\delta_h, v_h)_{\ell^2} - (\delta_h, v_h)_{L^2(\Omega)} = \sum_{K \in \mathcal{T}_h}\int_{K} (D_K\nabla \delta_h)\cdot\nabla v_h \,\mathrm{d}x = (\bm{G}_h, \nabla v_h)_{L^2(\Omega)},
    \end{align*}
    where $\bm{G}_h|_K := D_K\nabla \delta_h|_K$ for $K \in \mathcal{T}_h$. Moreover, $\bm{G}_h$ is of higher order.
    \begin{lemma}\label{lem: high order}
        For every $2 \le q < \infty$, there exists $h_0 > 0$ such that for all $0 < h \le h_0$,
        \begin{align*}
            \|\bm{G}_h\|_{L^q(\Omega)} \le C_qh^3.
        \end{align*}
    \end{lemma}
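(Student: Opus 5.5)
The plan is to bound $\bm{G}_h$ elementwise and then apply an inverse inequality to $\delta_h$, which is a piecewise linear function. By Lemma~\ref{lem: consistency}, on each $K\in\mathcal{T}_h$ we have $|\bm{G}_h|_K| = |D_K\nabla\delta_h|_K| \le C h_K^2 |\nabla \delta_h|_K|$, since $D_K$ is symmetric positive semidefinite with spectral norm at most $\frac{d}{2(d+2)}h_K^2$. Raising to the power $q$ and summing over $K$ gives
\begin{align*}
    \|\bm{G}_h\|_{L^q(\Omega)} \le C h^2 \|\nabla \delta_h\|_{L^q(\Omega)}.
\end{align*}
The factor $h^2$ does not yet give $h^3$, because $\delta_h$ itself is only $O(h^2)$ in $L^q$.

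Next, since $\delta_h\in V_h$ and the mesh is quasi-uniform, the inverse inequality $\|\nabla\delta_h\|_{L^q(\Omega)}\le Ch^{-1}\|\delta_h\|_{L^q(\Omega)}$ holds. Combined with Lemma~\ref{lem: delta_h Lq}, this yields
\begin{align*}
    \|\bm{G}_h\|_{L^q(\Omega)} \le C h^2\cdot h^{-1}\cdot C_q h^2 = C_q h^3,
\end{align*}
which is the claim. Note that the inverse inequality is used on the whole space $V_h$, not only on $V_h^0$; the local inverse estimate on each element suffices, so this causes no difficulty.

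The main obstacle is Lemma~\ref{lem: delta_h Lq}, which is already established in the paper and rests on Theorem~\ref{thm: L^p convergence of mass-lumped}. With that result in hand, the only point to check is that the $h_K^2$ scaling of $D_K$ and the $h^{-1}$ from the inverse inequality combine with uniform constants. This is guaranteed by shape regularity and quasi-uniformity, which give $h_K\le h$ and $h_K\ge ch$.
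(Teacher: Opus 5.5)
Your proposal is correct and follows the paper's proof exactly. You combine the elementwise bound $|D_K\nabla\delta_h|\le Ch_K^2|\nabla\delta_h|$ from Lemma~\ref{lem: consistency} with the inverse inequality and Lemma~\ref{lem: delta_h Lq}; since the local inverse estimate gives $h_K^2\cdot h_K^{-1}=h_K\le h$, only shape regularity, not quasi-uniformity, is needed in that step.
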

    \begin{proof}
        Since $D_K$ is symmetric, Lemma~\ref{lem: consistency} together with the inverse inequality and Lemma~\ref{lem: delta_h Lq} yields
        \begin{align*}
            \|\bm{G}_h\|_{L^q(\Omega)} = \Big(\sum_{K \in \mathcal{T}_h} \int_K |D_K \nabla \delta_h|^q \,\mathrm{d}x \Big)^{\frac1q}
            \le C h^2 \|\nabla \delta_h\|_{L^q(\Omega)} \le C_qh^3,
        \end{align*}
        which completes the proof.
    \end{proof}

    Let $g_h := \delta_h + (I_hf - f)$ and $\tilde{\bm{F}}_h := \bm{F}_h + \bm{G}_h$. By Lemmas~\ref{lem: delta_h Lq} and~\ref{lem: high order} and the interpolation estimate, for every $2 \le q < \infty$,
    \begin{align*}
        \|g_h\|_{L^q(\Omega)} + \|\tilde{\bm{F}}_h\|_{L^q(\Omega)} \le C_qh^2.
    \end{align*}
    We are now in a position to state the superconvergence result for the mass-lumped FEM.
    \begin{theorem}\label{thm: superconvergence of mass-lumped}
        For every $1 \le q < \infty$, there exists $h_0 > 0$ such that for all $0 < h \le h_0$,
        \begin{align*}
            \|\nabla (u_h^{*,\ell} - R_hu^*)\|_{L^q(\Omega)} \le C_qh^2.
        \end{align*}
    \end{theorem}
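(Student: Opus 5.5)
The plan is a $W^{1,q}$ analogue of Lemma~\ref{lem: general framework}, fed with the sharper data from Lemmas~\ref{lem: delta_h Lq} and~\ref{lem: high order}. By the decomposition just before the statement, $\theta_h^{\ell} = u_h^{*,\ell} - R_hu^* \in V_h^0$ satisfies
\begin{align*}
    (\nabla \theta_h^{\ell}, \nabla v_h)_{L^2(\Omega)} = (g_h, v_h)_{L^2(\Omega)} + (\tilde{\bm{F}}_h, \nabla v_h)_{L^2(\Omega)} \qquad \forall\, v_h \in V_h^0,
\end{align*}
with $\|g_h\|_{L^q(\Omega)} + \|\tilde{\bm{F}}_h\|_{L^q(\Omega)} \le C_q h^2$ for every $2 \le q < \infty$. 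Since $\Omega$ is bounded, H\"older's inequality reduces the case $1 \le q < 2$ to $q = 2$. It therefore suffices to treat $2 \le q < \infty$.

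Fix such $q$. Let $w_g, w_{\bm F} \in H_0^1(\Omega)$ be the continuous solutions with data $g_h$ and $\tilde{\bm F}_h$, exactly as in the proof of Lemma~\ref{lem: general framework}. Then $\theta_h^{\ell} = R_h(w_g + w_{\bm F})$. By Proposition~\ref{prop: stability of the Ritz projection} with exponent $q$,
\begin{align*}
    \|\nabla\theta_h^{\ell}\|_{L^q(\Omega)} \le C_q\big(\|\nabla w_g\|_{L^q(\Omega)} + \|\nabla w_{\bm F}\|_{L^q(\Omega)}\big).
\end{align*}
Lemma~\ref{lem: regularity} directly gives $\|\nabla w_{\bm F}\|_{L^q(\Omega)} \le C_q\|\tilde{\bm F}_h\|_{L^q(\Omega)} \le C_q h^2$.

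For $w_g$ I will use the $W^{2,q}$-regularity of the Laplacian on the convex domain $\Omega$, the same regularity already invoked for $u^*$ \cite{Dauge1988,grisvard2011elliptic}. It gives $\|w_g\|_{W^{2,q}(\Omega)} \le C_q\|g_h\|_{L^q(\Omega)}$, and hence $\|\nabla w_g\|_{L^q(\Omega)} \le C_q h^2$. If that regularity is unavailable for some $q$ (a corner issue), I would instead write $g_h = \operatorname{div}\bm{\Phi}$ with $\bm{\Phi} := \nabla \phi$, where $\phi \in H_0^1(\Omega)$ solves $-\Delta\phi = -g_h$. Then $\|\bm\Phi\|_{L^q(\Omega)} \le C\|\phi\|_{W^{2,r}(\Omega)} \le C\|g_h\|_{L^r(\Omega)}$ for a suitable $r \in [2,\infty)$, using $H^2$- or $W^{2,r}$-regularity and Sobolev embedding. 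After that, Lemma~\ref{lem: regularity} applies again. Combining the pieces yields $\|\nabla\theta_h^{\ell}\|_{L^q(\Omega)} \le C_qh^2$.

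The main obstacle is the bound on $\tilde{\bm F}_h$ in $L^q$ at order $h^2$. This is the reason for the preparatory work: $\bm F_h$ is already $O(h^2)$ in $L^\infty$, and $\bm G_h$ is $O(h^3)$ by Lemma~\ref{lem: high order}. The genuinely new ingredient is $\|g_h\|_{L^q(\Omega)} \le C_qh^2$ for large $q$. This relies on Lemma~\ref{lem: delta_h Lq}, which in turn uses the $L^q$ convergence from Theorem~\ref{thm: L^p convergence of mass-lumped}, together with $\|f - I_hf\|_{L^q(\Omega)} \le Ch^2\|f\|_{W^{2,\infty}(\Omega)}$. Once these data bounds are in place, the rest is the routine regularity-plus-Ritz-stability argument above.
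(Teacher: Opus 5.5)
Your proposal is correct and follows the paper's proof: write $\theta_h^{\ell} = R_h(w_{g_h} + w_{\tilde{\bm F}_h})$, apply the $W^{1,q}$-stability of $R_h$, and bound the two continuous solutions by elliptic regularity and Lemma~\ref{lem: regularity}, respectively. The only difference is that the paper bounds $w_{g_h}$ using $W^{2,4}$-regularity and the embedding $W^{2,4}(\Omega)\hookrightarrow W^{1,q}(\Omega)$ for $d \le 3$, so it needs only $\|g_h\|_{L^4(\Omega)}$; this is exactly your fallback, since your $\phi$ is just $-w_{g_h}$, and your primary route via $W^{2,q}$-regularity for every $q<\infty$ is also valid on $\Omega=(a,b)^d$.
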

    \begin{proof}
        It suffices to consider $2 \le q < \infty$, since the case $1 \le q < 2$ then follows from H\"older's inequality. By the above decomposition, $\theta_h^{\ell}$ satisfies
        \begin{align*}
            (\nabla \theta_h^{\ell}, \nabla v_h)_{L^2(\Omega)} = (g_h,v_h)_{L^2(\Omega)} + (\tilde{\bm{F}}_h,\nabla v_h)_{L^2(\Omega)} \qquad \forall\, v_h \in V_h^0.
        \end{align*}
        Let $w_{g_h}, w_{\tilde{\bm{F}}_h} \in H_0^1(\Omega)$ satisfy
        \begin{align*}
            (\nabla w_{g_h}, \nabla v)_{L^2(\Omega)} &= (g_h,v)_{L^2(\Omega)} && \forall\, v \in H_0^1(\Omega), \\
            (\nabla w_{\tilde{\bm{F}}_h}, \nabla v)_{L^2(\Omega)} &= (\tilde{\bm{F}}_h,\nabla v)_{L^2(\Omega)} && \forall\, v \in H_0^1(\Omega),
        \end{align*}
        so that $\theta_h^{\ell} = R_h(w_{g_h} + w_{\tilde{\bm{F}}_h})$. By elliptic regularity and Lemma~\ref{lem: regularity}, respectively, we have
        \begin{align*}
            \|w_{g_h}\|_{W^{2,4}(\Omega)} \le C\|g_h\|_{L^4(\Omega)} \le C h^2, \qquad \|w_{\tilde{\bm{F}}_h}\|_{W^{1,q}(\Omega)} \le C_q\|\tilde{\bm{F}}_h\|_{L^q(\Omega)} \le C_qh^2.
        \end{align*}
        Hence, Proposition~\ref{prop: stability of the Ritz projection} and the Sobolev embedding $W^{2,4}(\Omega) \hookrightarrow W^{1,q}(\Omega)$ for $d \le 3$ yield
        \begin{align*}
            \|\nabla \theta_h^{\ell}\|_{L^q(\Omega)} & \le C \|\nabla (w_{g_h} + w_{\tilde{\bm{F}}_h})\|_{L^q(\Omega)} \\
            &\le C_q \|w_{g_h}\|_{W^{2,4}(\Omega)} + C \|w_{\tilde{\bm{F}}_h}\|_{W^{1,q}(\Omega)} \le C_qh^2,
        \end{align*}
        which completes the proof.
    \end{proof}

\begin{remark}\label{Remark:1D}
    For $d = 1$, the superconvergence also holds for $q = \infty$. Indeed, $w_{\tilde{\bm{F}}_h}$ can be decomposed as $w_{\tilde{\bm{F}}_h} = w_{\bm{F}_h} + w_{\bm{G}_h}$, where $w_{\bm{F}_h}$ and $w_{\bm{G}_h}$ are defined analogously with $\bm{F}_h$ and $\bm{G}_h$, respectively. In one dimension, these problems can be solved explicitly, which yields $\|w'_{\bm{F}_h}\|_{L^{\infty}(\Omega)} \le Ch^2$ and $\|w'_{\bm{G}_h}\|_{L^{q}(\Omega)} \le C_qh^3$. Hence, $\theta_h^{\ell} = R_h(w_{g_h} + w_{\bm{F}_h}) + R_hw_{\bm{G}_h}$ with $\|R_h(w_{g_h} + w_{\bm{F}_h})\|_{W^{1,\infty}(\Omega)} \le Ch^2$ and $\|R_hw_{\bm{G}_h}\|_{W^{1,q}(\Omega)} \le C_qh^3$, and the inverse inequality yields $\|R_hw_{\bm{G}_h}\|_{W^{1,\infty}(\Omega)} \le C_qh^{3-\frac1q} \le C_qh^2$.
\end{remark}

\begin{remark}\label{Remark:2 3D}
    For $d \in \{2,3\}$, the superconvergence also holds for $q = \infty$ if the mesh satisfies $D_K = D$ for all $K \in \mathcal{T}_h$ and some matrix $D$ independent of $K$. In this case, $\bm{F}_h$ can be decomposed as
    \begin{align*}
        \bm{F}_h = D \nabla I_h f = D \nabla f + D\nabla (I_hf - f) =: \bm{F}_h^1 + \bm{F}_h^2.
    \end{align*}
    For the first term, integration by parts yields
    \begin{align*}
        (\bm{F}_h^1,\nabla v)_{L^2(\Omega)} = \big(- \operatorname{div}(D\nabla f),v\big)_{L^2(\Omega)} \qquad \forall\, v \in H_0^1(\Omega),
    \end{align*}
    where $\|\operatorname{div}(D\nabla f)\|_{L^\infty(\Omega)} \le Ch^2$, so that this term can be treated in the same way as $g_h$. Since $\|\bm{F}_h^2\|_{L^q(\Omega)} + \|\bm{G}_h\|_{L^q(\Omega)} \le C_q h^3$, the terms $\bm{F}_h^2$ and $\bm{G}_h$ can be handled as in Remark~\ref{Remark:1D}, which yields the superconvergence for $q = \infty$.
\end{remark}





\section{Numerical experiments}\label{sec: numerical experiments}

In this section, we present two numerical examples in two dimensions to compare the standard FEM and the mass-lumped FEM. The errors are measured in the $L^p$- and $W^{1,p}$-norms for $p = 1, 2, 4, 8, \infty$. Both methods use the continuous piecewise linear finite element space $V_h^0$ on a triangulation obtained by splitting each square of a uniform square grid along its southwest-to-northeast diagonal. Note that this triangulation satisfies $D_K = D$ for all $K \in \mathcal{T}_h$. The discrete nonlinear eigenvalue problems are solved by a Newton method for the nodal values and the eigenvalue simultaneously, combined with a backtracking line search. On each refined mesh, the initial iterate is obtained by interpolating the converged solution on the preceding mesh and is then normalized with respect to the $L^2$ inner product for the standard FEM and with respect to the discrete inner product $(\cdot,\cdot)_{\ell^2}$ for the mass-lumped FEM. The iteration is stopped once the relative residual is below $2 \times 10^{-11}$ and the normalization constraint is satisfied up to $10^{-12}$.
\subsection{Smooth ground state constructed from two sine modes}

Let $\Omega = (0,1)^2$ and $\beta = 10$. We choose the potential $V$ such that the exact ground state and the associated eigenvalue are given by
\begin{align*}
    u^*(x,y) = \frac{8}{\sqrt{17}} \Big(\sin\pi x + \frac14\sin3\pi x\Big)\sin\pi y, \qquad \lambda^* = 120,
\end{align*}
where the normalization factor ensures $\|u^*\|_{L^2(\Omega)} = 1$. More precisely, we set
\begin{align*}
    V(x,y) = 120 - q(x) - 10\,u^*(x,y)^2, \qquad q(x) = 2\pi^2\frac{9+10\cos2\pi x}{5+2\cos2\pi x}.
\end{align*}
A direct computation shows that $-\Delta u^* = q\,u^*$, and hence $(\lambda^*, u^*)$ solves \eqref{equ: GPEVP}. Moreover, $V$ is smooth on $\overline{\Omega}$ and satisfies
\begin{align*}
    V(x,y) \ge 120 - \frac{38\pi^2}{7} - \frac{1000}{17} > 7.59.
\end{align*}
Since $u^*$ is strictly positive in $\Omega$ and the positive normalized eigenfunction is unique, $u^*$ is the ground state; see, e.g., \cite{cances2010numerical}. The mesh size is $h = \sqrt{2}/N$, where $N$ denotes the number of subdivisions in each coordinate direction. On the coarsest mesh, Newton's method is initialized with the nodal interpolant $I_hu^*$, normalized with respect to the corresponding inner product.

Figure~\ref{fig:convergence-A} shows the errors in the $L^p$- and $W^{1,p}$-norms for $p = 1, 2, 4, 8, \infty$. For both methods, the errors in the $L^p$-norm and the $W^{1,p}$-norm converge with second and first order, respectively, which is consistent with Theorems~\ref{thm: p convergence}, \ref{thm: Linfty and W1infty convergence}, and~\ref{thm: L^p convergence of mass-lumped}. Moreover, Table~\ref{tab:ritz-errors} reports the errors $\|u_h - R_hu^*\|_{W^{1,p}(\Omega)}$, where $u_h$ denotes $u_h^*$ or $u_h^{*,\ell}$. For both methods and all tested values of $p$, these errors converge with approximately second order, i.e., one order higher than $\|u^* - u_h\|_{W^{1,p}(\Omega)}$. This confirms the superconvergence results in Theorems~\ref{thm: superconvergence of standard FEM} and~\ref{thm: superconvergence of mass-lumped}. For the mass-lumped FEM with $p = \infty$, the observed superconvergence agrees with Remark~\ref{Remark:2 3D}.

\begin{figure}[tbp]
    \centering
    \includegraphics[width=0.48\linewidth]{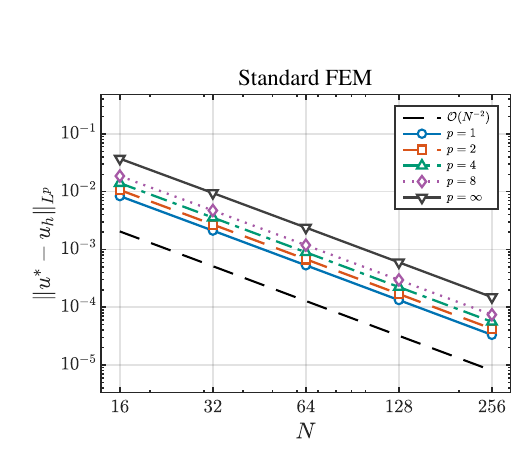}
    \hfill
    \includegraphics[width=0.48\linewidth]{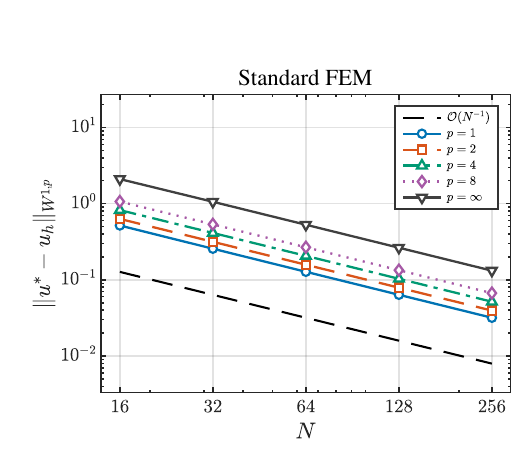}

    \par\medskip

    \includegraphics[width=0.48\linewidth]{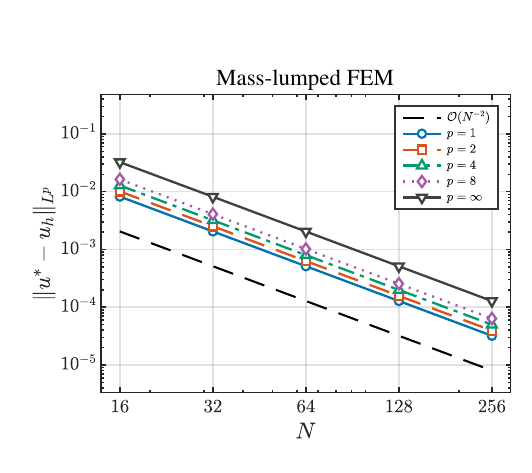}
    \hfill
    \includegraphics[width=0.48\linewidth]{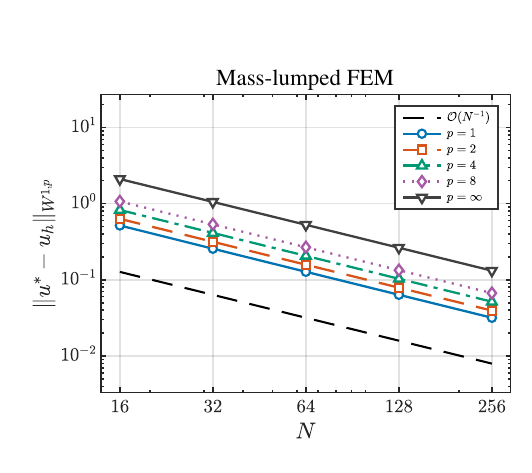}

    \caption{Errors of the standard FEM (top) and the mass-lumped FEM (bottom) in the $L^p$-norm (left) and the $W^{1,p}$-norm (right) for $p = 1,2,4,8,\infty$. The dashed lines indicate the reference rates $\mathcal{O}(N^{-2})$ (left) and $\mathcal{O}(N^{-1})$ (right).}
    \label{fig:convergence-A}
\end{figure}

\begin{table}[htbp]
\centering
\small
\setlength{\tabcolsep}{9pt}
\renewcommand{\arraystretch}{1.1}
\caption{Errors $\|u_h - R_hu^*\|_{W^{1,p}(\Omega)}$ and convergence rates, where $u_h$ denotes $u_h^*$ for the standard FEM and $u_h^{*,\ell}$ for the mass-lumped FEM.}
\label{tab:ritz-errors}
\begin{tabular}{@{}rr cc cc@{}}
\toprule
& & \multicolumn{2}{c}{Standard FEM} & \multicolumn{2}{c}{Mass-lumped FEM} \\
\cmidrule(lr){3-4}\cmidrule(lr){5-6}
$p$ & $N$ & $\|u_h^*-R_hu^*\|_{W^{1,p}}$ & Rate & $\|u_h^{*,\ell}-R_hu^*\|_{W^{1,p}}$ & Rate \\
\midrule
\multirow{5}{*}{$1$} & $16$ & $5.9133\times10^{-2}$ & -- & $6.9708\times10^{-2}$ & -- \\
 & $32$ & $1.4961\times10^{-2}$ & 1.98 & $1.7588\times10^{-2}$ & 1.99 \\
 & $64$ & $3.7519\times10^{-3}$ & 2.00 & $4.4041\times10^{-3}$ & 2.00 \\
 & $128$ & $9.3875\times10^{-4}$ & 2.00 & $1.1015\times10^{-3}$ & 2.00 \\
 & $256$ & $2.3473\times10^{-4}$ & 2.00 & $2.7541\times10^{-4}$ & 2.00 \\
\midrule
\multirow{5}{*}{$2$} & $16$ & $5.5839\times10^{-2}$ & -- & $7.2273\times10^{-2}$ & -- \\
 & $32$ & $1.4111\times10^{-2}$ & 1.98 & $1.8142\times10^{-2}$ & 1.99 \\
 & $64$ & $3.5377\times10^{-3}$ & 2.00 & $4.5403\times10^{-3}$ & 2.00 \\
 & $128$ & $8.8506\times10^{-4}$ & 2.00 & $1.1354\times10^{-3}$ & 2.00 \\
 & $256$ & $2.2130\times10^{-4}$ & 2.00 & $2.8386\times10^{-4}$ & 2.00 \\
\midrule
\multirow{5}{*}{$4$} & $16$ & $6.1020\times10^{-2}$ & -- & $8.4667\times10^{-2}$ & -- \\
 & $32$ & $1.5423\times10^{-2}$ & 1.98 & $2.1239\times10^{-2}$ & 2.00 \\
 & $64$ & $3.8669\times10^{-3}$ & 2.00 & $5.3145\times10^{-3}$ & 2.00 \\
 & $128$ & $9.6742\times10^{-4}$ & 2.00 & $1.3289\times10^{-3}$ & 2.00 \\
 & $256$ & $2.4190\times10^{-4}$ & 2.00 & $3.3225\times10^{-4}$ & 2.00 \\
\midrule
\multirow{5}{*}{$8$} & $16$ & $6.8112\times10^{-2}$ & -- & $1.0240\times10^{-1}$ & -- \\
 & $32$ & $1.7240\times10^{-2}$ & 1.98 & $2.5721\times10^{-2}$ & 1.99 \\
 & $64$ & $4.3243\times10^{-3}$ & 2.00 & $6.4363\times10^{-3}$ & 2.00 \\
 & $128$ & $1.0820\times10^{-3}$ & 2.00 & $1.6095\times10^{-3}$ & 2.00 \\
 & $256$ & $2.7056\times10^{-4}$ & 2.00 & $4.0239\times10^{-4}$ & 2.00 \\
\midrule
\multirow{5}{*}{$\infty$} & $16$ & $8.9358\times10^{-2}$ & -- & $1.5084\times10^{-1}$ & -- \\
 & $32$ & $2.2908\times10^{-2}$ & 1.96 & $3.9699\times10^{-2}$ & 1.93 \\
 & $64$ & $5.8751\times10^{-3}$ & 1.96 & $1.0098\times10^{-2}$ & 1.98 \\
 & $128$ & $1.4887\times10^{-3}$ & 1.98 & $2.5405\times10^{-3}$ & 1.99 \\
 & $256$ & $3.7476\times10^{-4}$ & 1.99 & $6.3680\times10^{-4}$ & 2.00 \\
\bottomrule
\end{tabular}

\medskip
\end{table}

\subsection{Ground state in a harmonic potential}

This example concerns the ground state of a repulsively interacting Bose--Einstein condensate confined by an isotropic harmonic trap. Let $\Omega = (-6,6)^2$, $\beta = 100$, and
\begin{align*}
    V(x,y) = \frac{x^2+y^2}{2}.
\end{align*}
The mesh size is $h = 12\sqrt{2}/N$, where $N$ denotes the number of subdivisions in each coordinate direction. On the coarsest mesh, the initial iterate is the finite element function whose interior nodal values coincide with those of
\begin{align*}
    u_0(x,y) = \exp\Big(-\frac{x^2+y^2}{8}\Big)
\end{align*}
and whose boundary nodal values vanish, normalized with respect to the corresponding inner product. Since the exact ground state is not available, the error on the mesh of size $h$ is estimated by the difference $u_h - u_{h/2}$ between the numerical solutions on two successive meshes. Since the meshes are nested, $u_h$ can be represented exactly on the finer mesh, and all norms are evaluated there.

Figure~\ref{fig:convergence-B} shows the results for both methods. For $p = 1, 2, 4, 8, \infty$, the differences $\|u_h - u_{h/2}\|_{L^p(\Omega)}$ and $\|u_h - u_{h/2}\|_{W^{1,p}(\Omega)}$ converge with approximately second and first order, respectively. These results further confirm the convergence rates established in Theorems~\ref{thm: p convergence}, \ref{thm: Linfty and W1infty convergence}, and~\ref{thm: L^p convergence of mass-lumped}.
\begin{figure}[tbp]
    \centering
    \includegraphics[width=0.48\linewidth]{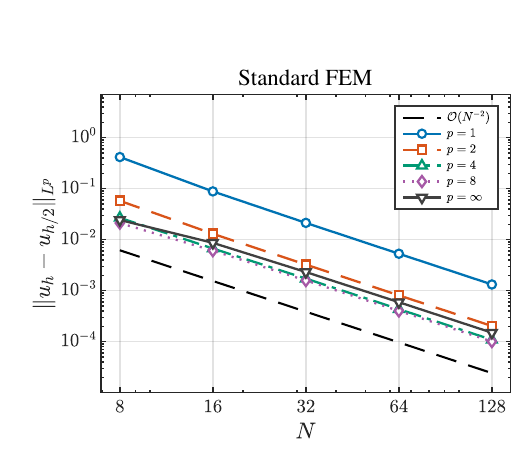}
    \hfill
    \includegraphics[width=0.48\linewidth]{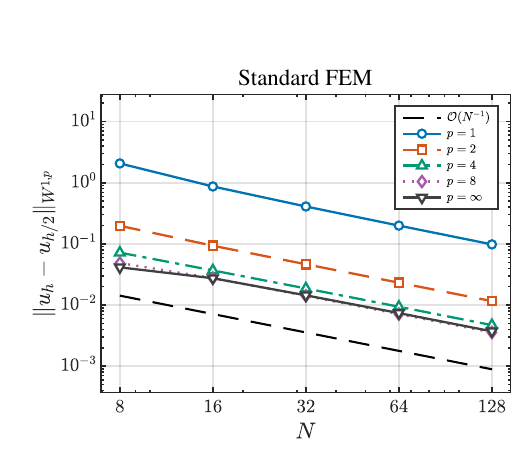}

    \par\medskip

    \includegraphics[width=0.48\linewidth]{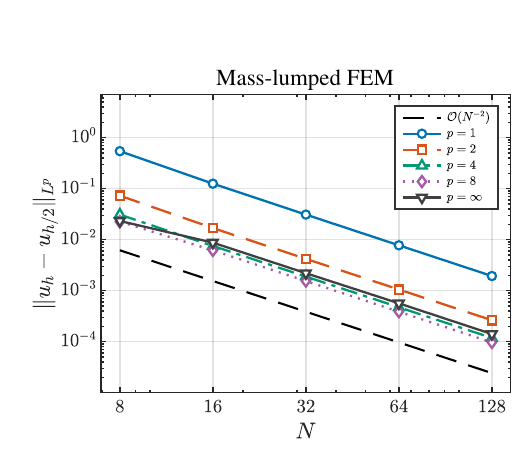}
    \hfill
    \includegraphics[width=0.48\linewidth]{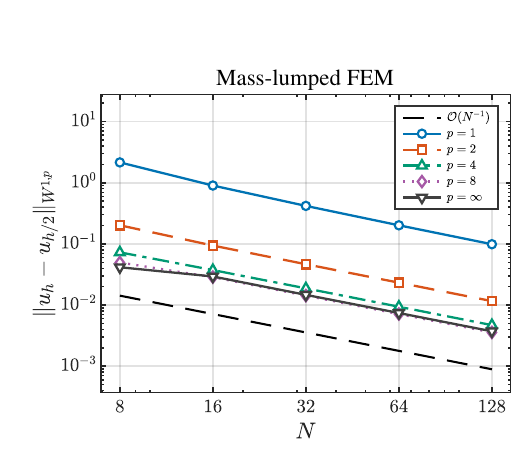}

    \caption{Differences $u_h - u_{h/2}$ between the numerical solutions on successive meshes for the standard FEM (top) and the mass-lumped FEM (bottom) in the $L^p$-norm (left) and the $W^{1,p}$-norm (right) for $p = 1,2,4,8,\infty$. The dashed lines indicate the reference rates $\mathcal{O}(N^{-2})$ (left) and $\mathcal{O}(N^{-1})$ (right).}
    \label{fig:convergence-B}
\end{figure}

\section{Concluding remarks}\label{sec: conclusion}

In this paper, we have analyzed the standard and the mass-lumped linear finite element methods for the Gross--Pitaevskii eigenvalue problem. For the standard FEM, we have proved that the discrete ground state is unique up to sign for sufficiently small mesh sizes. For both methods, we have derived optimal convergence rates in the $L^p$-norm for $1 \le p < \infty$ and in the $W^{1,p}$-norm for $1 \le p \le \infty$, as well as an almost optimal convergence rate, up to a logarithmic factor, in the $L^\infty$-norm. Furthermore, we have shown that the discrete ground states superconverge to the Ritz projection of the exact ground state in the $W^{1,p}$-seminorm. For the standard FEM, this holds for $1 \le p \le \infty$, whereas for the mass-lumped FEM, it holds for $1 \le p < \infty$ in general and for $p = \infty$ if $d = 1$ or if $D_K$ is independent of $K$. The latter case is confirmed by our numerical experiments. Whether the superconvergence of the mass-lumped FEM holds for $p = \infty$ on general meshes in two and three dimensions remains an open question.

\section*{Declaration of generative AI use}
The research problem considered in this paper, the main ideas and proof strategies underlying the error analysis, and the overall structure of the manuscript were conceived and developed by the authors. Generative AI tools, ChatGPT (OpenAI) and Claude (Anthropic), were used to assist with the language editing of the manuscript, the verification of mathematical derivations, the design of numerical examples, and the review of related results in the literature. All AI-generated suggestions were critically examined and, where necessary, revised by the authors. The authors have verified all mathematical arguments, references, and numerical results, and take full responsibility for the content of this paper.
\bibliographystyle{siamplain}
\bibliography{references}

\end{document}